\newtheorem{theorem}{Theorem}[section]
\newtheorem{pro}[theorem]{Proposition}
\newtheorem{cor}[theorem]{Corollary}
\newtheorem{lemma}[theorem]{Lemma}
\theoremstyle{definition}
\theoremstyle{remark}
\numberwithin{equation}{section}
\begin{document}

\title[ Chebyshev endomorphisms on  \({\bf \mathbb P}^3({\bf \mathbb C})\) ]{ Holomorphic endomorphisms of  \({\bf \mathbb P}^3({\bf \mathbb C})\) related to 
 a Lie algebra of type \(A_3\) and catastrophe theory}

\author{ Keisuke Uchimura}
\address{Department of Mathematics, Tokai University,  Hiratsuka, 259-1292, Japan}
\email{uchimura@tokai-u.jp. }

\subjclass[2010]{Primary 37F45, 58K35; Secondary 22E10, 37F10, 32H50}

\keywords{Dynamical systems. Catastrophe theory. Chebyshev endomorphisms.}

\begin{abstract}
The typical chaotic maps $f(x)=4x(1-x)$ and $g(z)=z^2-2$ are well known.
Veselov generalized these maps. We consider a class of maps   \(P_{A_3}^d\) of those generalized maps  and view them as holomorphic endomorphisms of  $ {\mathbb P^3}({\mathbb C})$ and make use of methods of complex dynamics in higher dimension developed by Bedford,  Fornaess, Jonsson and Sibony.  We determine    Julia sets 
$J_1, J_2, J_3, J_{\Pi}$
and the global forms of external rays.  Then we have a foliation of the Julia set  $J_2$ formed by stable disks that are composed of external rays.  

 We also show some relations between those maps and catastrophe theory. The set of the critical values of each map restricted to a real three-dimensional subspace decomposes into a tangent developable of an astroid in space and two real curves. They coincide with a cross-section of the 
set obtained by Poston and Stewart where binary quartic forms are degenerate. The tangent developable encloses the Julia set $J_3$  and
joins to a M\(\ddot{o}\)bius strip which is the Julia set  $J_{\Pi}$ in the plane at infinity in ${\mathbb P}^3({\mathbb C})$. Rulings of the 
M\(\ddot{o}\)bius strip correspond to rulings of  the surface of  $J_3$ by external rays.
\end{abstract}

\maketitle

\section{Introduction}
The typical chaotic map \enskip \(f(x) = 4x(1-x)\) \enskip is well known e.g. in \cite{UN}.  Its complex version is a Chebyshev map \enskip \(g(z) = z^2 - 2\).   It is also a chaotic map.  Generalized  Chebyshev  functions and maps in several variables were studied by several researchers, Koornwinder \cite{K},  Lidl \cite{L},    Beerends \cite{B}, Veselov \cite{V}, Hoffman and Withers \cite{HW} and Uchimura \cite{UU1}. 

A polynomial endomorphism  \(P_{A_3}^d(z_1,z_2,z_3)\) of degree \(d\) on \({\mathbb C}^3\)  is defined by the following.   We consider the \(j\)-th elementary symmetric function in \( t_1, t_2 ,t_3, t_4\)  with  \(t_4 = 1/( t_1t_2t_3)\)  for  \(j = 1, 2, 3\).
\begin{equation}
\begin{split}
\mbox{Let}\qquad \qquad \qquad\qquad \qquad z_1 = t_1 + t_2 + t_3 + \frac 1 {t_1t_2 t_3}, \\
z_2 = t_1t_2 + t_1 t_3 + t_2 t_3 + \frac 1{t_1t_2} + \frac 1{t_1 t_3} + \frac 1{t_2 t_3},\\
z_3= \frac 1{t_1} + \frac 1{t_2} + \frac 1{t_3} + t_1t_2 t_3,   \qquad (t_j \in {\bf \mathbb C}\setminus \{0\}).\\
\end{split}
\end{equation}
\[\mbox{Set}\qquad \qquad \Phi _1( t_1, t_2 ,t_3) =(z_1,z_2,z_3).\qquad\qquad \qquad\qquad \qquad\]

Then  \(P_{A_3}^d\) satisfies the  following commutative   diagram :
\def\spmapright#1{\smash{
  \mathop{\hbox to 1.3cm{\rightarrowfill}}
   \limits^{#1}}}
 \def\sbmapright#1{\smash{
  \mathop{\hbox to 1.3cm{\rightarrowfill}}
   \limits^{#1}} } 
\def\lmapdown#1{\Big\downarrow \llap{$\vcenter{\hbox{$\scriptstyle#1\,$}}$}}
\def\rmapdown#1{\Big\downarrow \rlap{$\vcenter{\hbox{$\scriptstyle#1\,$}}$}}
\def\lmapup#1{\Big\uparrow \llap{$\vcenter{\hbox{$\scriptstyle#1\,$}}$}}
\def\rmapup#1{\Big\uparrow \rlap{$\vcenter{\hbox{$\scriptstyle#1\,$}}$}}
\begin{equation}
\begin{split}
\begin {array}{ccc}
  (t_1, t_2, t_3) & \spmapright{ }& (t_1^d, t_2^d, t_3^d)\\
  \lmapdown{\Phi _{1} \enskip} &  & \lmapdown{\Phi _{1} \enskip} \\
  (z_1,z_2,z_3) & \sbmapright{} & P_{A_3}^d(z_1,z_2,z_3) \enskip.
  \end{array}
\end{split}
\end{equation}
Clearly, \(\Phi _{1}\) \enskip is a branched covering map.
We show two examples :
   \[ P_{A_3}^2(z_1,z_2,z_3) = (z_1^2-2z_2, z_2^2-2z_1z_3+2, z_3^2-2z_2),\]
 \[P_{A_3}^3(z_1,z_2,z_3) = (z_1^3-3z_1z_2+3z_3, z_2^3-3z_1z_2z_3+3z_3^2+3z_1^2-3z_2,\] \[z_3^3-3z_3z_2+3z_1).\]
These are based on the definition of Veselov \cite{V}.
Veselov \cite{V}  defined generalized Chebyshev maps as follows.  Let \(G\) be a simple complex Lie algebra of rank \(n\), \(H\) be its Cartan subalgebra, \(H^*\) be its dual space, \(\mathcal{L}\) be a lattice of weights in \(H^*\) generated by the fundamental weights \(\varpi _1, ... , \varpi_n\) and \(L\) be the dual lattice in \(H\).  One defines 
\[\phi_G : H/L \rightarrow {\bf  \mathbb C}^n, \phi_G = (\varphi_1, ... , \varphi _n), \enskip \varphi_k = \sum_{w \in W}\exp[2\pi i w(\varpi_k)],\] where \(W \) is the Weyl group, acting on the space \(H^*\).  

With each  \(G\) of rank \(n\) is associated an infinite series of integrable polynomial mappings \(P_G^d\) from \({\bf  \mathbb C}^n\) to \({\bf  \mathbb C}^n,  d = 2, 3, ... \), determined by the condition:
\[\phi _G(dx) = P_G^d(\phi _G(x)).\] 
For \(n = 1\) there is a unique simple algebra \(A_1\).  Here \(\phi _{A_1} = 2\cos(2\pi x)\) and the \(P_{A_1}^d\) are, within a linear substitution, Chebyshev polynomials of a single variable.  Here \(A_n\) is the Lie algebra of \(SL(n+1, {\bf  \mathbb C})\).

The dynamics of \(P_{A_2}^d\) is studied in \cite{U}.  In   this paper, we consider maps \(P_{A_3}^d\)  and view them as holomorphic endomorphisms of  ${\mathbb P}^3({\mathbb C})$ and make use of methods of complex   dynamics in higher dimension developed by Fornaess and Sibony \cite{FS},  and Bedford and Jonsson \cite{BJ}.  

In this paper we will provide a typical example of complex dynamics in higher dimension.  In this higher dimensional dynamics, classical geometrical figures, e. g.,  a M\(\ddot{o}\)bius strip and a special ruled surface (tangent developable) which is called  the 'Holy Grail' in catastrophe theory appear with their chaotic dynamical structures.

The main tools used in this paper are Julia sets and external rays.

We present some background on Julia sets.  The main references are  \cite{BJ},  \cite{FS} and  \cite{S}.  Let \enskip \( f :  {\bf  \mathbb C}^k \to  {\bf  \mathbb C}^k\) \enskip be a regular polynomial endomorphism of degree \(d\) (see the paragraph before Proposition 2.1).    Set 
\[K(f) : = \{z \in  {\bf  \mathbb C}^k : \{f^n(z)\} \quad \mbox{ is bounded}\}.\]
We define the Green function of \(f\) as 
\[G(z) : = \lim_{n\to \infty}d^{-n} \log^+ \Vert f^n(z)\Vert, \quad z \in  {\bf  \mathbb C}^k.\]
The Green current \enskip \(T_{{\bf  \mathbb C}^k} : = \frac 1{2\pi} dd^cG\) \enskip is a positive closed (1,1)-current.  A regular polynomial endomorphism \(f\) extends to a holomorphic endomorphism of \( {\bf  \mathbb P}^k\),  still denoted by \(f\).  The Green current  \(T_{{\bf  \mathbb C}^k}\) has an extension as a positive closed current to \({\bf  \mathbb P}^k\) in the following manner.
Every holomorphic endomorphism  \(f\) of  \({\bf \mathbb P}^k\) has a lift  \(F : {\bf  \mathbb C}^{k+1}  \to {\bf  \mathbb C}^{k+1}. \)   The projection   
 \(\pi : {\bf \mathbb C}^{k+1} \setminus \{0\} \to  {\bf  \mathbb P}^{k}\) semiconjugates \(F\) to \(f : \pi \circ F = f \circ \pi.\) 
The  Green function \(G_F\) of \(F\) is defined by
\[G_F : = \lim_{n\to \infty}d^{-n} \log \Vert F^n(z) \Vert.\]
The Green current \enskip  \(T = T_{{\bf  \mathbb P}^k}\) of \(f\) is defined by
\[\pi^*T  = \frac 1{2\pi} dd^c G_F.\]
We can define the currents \(T^l : = T \land  . . . \land T\) (\(l\) terms).  The \(l\)-th Julia set \(J_l(f)\) is the support of \(T^l\).  
The Green measure  \(\mu_f\) of \(f\) is defined by
\[\mu _f  : = (T)^k.\]
The measure \(\mu_f\) is a probability measure that is invariant under  \(f\) and maximizes entropy.

In our case we consider four kinds of Julia sets  \(J_1(f)\),   \(J_2(f)\),   \(J_3(f)\)  and   \(J_2(f_{\Pi})\),  where      \(f_{\Pi}\)  denotes the restriction of \(f\)  to the hyperplane \(\Pi\) at infinity.  We will determine those four kinds of Julia sets in Theorems 2.7,  3.2 and 4.2.

We will determine the Julia set \(J_3(f)\)  and the maximal entropy measure \(\mu_f\)  in Theorem 2.7.
The Julia set \(J_3(f)\)  coincides with the set \(K(f)\).
To obtain Theorem 2.7 we use a Briend and Duval's theorem in complex dynamics and some results of the theory of Lie groups.
 
We will determine the Julia set  \(J_2(f_{\Pi})\)   and the maximal entropy measure  \(\mu_{f_{\Pi}}\) in Theorem 3.2.  The Julia set  \(J_2(f_{\Pi})\)  is  a M\(\ddot{o}\)bius strip \(\mathcal{M}\).  On the   M\(\ddot{o}\)bius strip \(\mathcal{M}\)  we give a dynamical measure.  The map  \(f_{\Pi}\) restricted to  \({\bf \mathbb C}^2\)  is a polynomial skew product map of  \({\bf \mathbb C}^2\).  The maximal entropy measure for \(f_{\Pi}\)  restricted to the base curve which is a unit circle is \(d\theta/2\pi\) and that restricted to each ruling is the invariant measure of Chebyshev maps in one variable.  

Next we provide  some background on external rays.  External rays play an important role in the theory of dynamics in one complex variable.   Let \( f :  {\bf  \mathbb P} \to  {\bf  \mathbb P}\) \enskip be a monic polynomial map of degree \(d \ge 2\).    Suppose that the set \(K = K(f) \) is connected.  Then the complement   \({\bf \mathbb C} \setminus K\)  is conformally isomorphic to the complement   \({\bf \mathbb C} \setminus {\bar{\bf \mathbb D}}\)   under the  B\(\ddot{o}\)ttcher map \(\phi\).  The external rays for \(K\) are defined by 
\[\{z : \arg (\phi(z)) = constant\}.\]
The image of an external ray under \(f\) is also another  external ray.

Bedford and Jonsson \cite{BJ} define external rays for holomorphic endomorphisms of  \({\bf  \mathbb P}^k\). 
We will determine the global forms of external rays of our maps \(f = P_{A_3}^d\).  The image of each external ray under the extended map \(f\) on \({\bf  \mathbb P}^3\)  is also an external ray.   We will show in Theorem 4.2 that the Julia set \(J_2(f)\)  is a foliated space and leaves of the space are stable disks composed of external rays.   The image of a stable disk  under the map \(f\)  is another stable disk.

Next we consider the dynamics of \(P_{A_3}^d\) restricted to a real three-dimensional subspace.
The map \enskip \(P_{A_3}^d :  {\mathbb C}^3 \to {\mathbb C}^3\) \enskip admits an invariant space 
\[R_3 : = \{(z_1, z_2, z_3) \in {\mathbb C}^3  : z_1 = {\bar z}_3 \enskip \mbox{and} \enskip z_2  \enskip \mbox{is real}\}.\]
We consider the dynamics of \(P_{A_3}^d\) restricted to \(R_3\).  The set \enskip \(J_3(f) = K(f)\)\enskip lies in the space \(R_3\).  Sometimes we may regard  \(R_3\) as  \({\bf  \mathbb R}^3\).  Then  \(J_3(f)\) is isomorphic to a closed domain  in \({\bf  \mathbb R}^3\)  bounded by the ruled surface \(\mathcal{A}\) whose base curve is an astroid in space (see Proposition 2.4 and Figure 3). 
In particular,   \(\mathcal{A}\) is a part of the tangent developable of an astroid in space and so we call it an astroidalhedron.   A ruled surface is called a tangent developable if its rulings are tangent lines to its base curve.   The  ruled surface \(\mathcal{A}\) has a relationship to the root system of Lie algebra of type  \(A_3\)  and a \((\sqrt 3, \sqrt 3, 2)-\)tetrahedron (see Figure 2). 

The external rays included  in \(R_3\)  are half-lines that connect the  ruled surface \(\mathcal{A}\)  and the M\(\ddot{o}\)bius strip \enskip \(\mathcal{M} = J_2(f_{\Pi})\).  By this fact,  we will show  that rulings of \(\mathcal{M}\) correspond to rulings of \(\mathcal{A}\)  by external rays in Proposition 4.9.

Next we will show some relations between those maps and catastrophe theory.  

The dynamics of the  maps  \( P^d_{A_2} \) on \( {\mathbb C}^2\)  is studied  in \cite{U}. 
 The set of critical values of \( P^d_{A_2} \) restricted to 
 \(\{z_1 = {\bar z}_2\}\) is proved to be  a deltoid.  The deltoid coincides with a cross-section of the bifurcation set (caustics) of the elliptic umbilic catastrophe map  \((D_4^-)\).   In \cite{U}, it is shown that  the external rays and their extensions  constitute a family  of lines whose envelope is the deltoid.  Hence these lines are real 'rays' of caustics.   See Figure 9.

In addition to the caustics, the deltoid has relations with binary cubic forms\\
\qquad \(f(x,y) = ax^3 + bx^2y + cxy^2 + dy^3, \quad a,b,c,d \in {\mathbb R}.\)\\
Let V be the set where the discriminant of \(f(x,y)\) vanishes.  To understand the geometry of the set V, Zeeman\cite{Z} pursues a different tack.  
Zeeman\cite{Z} shows that \(V \cap S^3\)  is mapped diffeomorphically to the 'umbilic bracelet'.  It has a deltoid section that rotates \(1/3\) twist going once round the bracelet.

We return to the study of the maps \(P_{A_3}^d\).  In this case we will show that the set of critical values of \(P_{A_3}^d\) restricted to \(R_3\) has relations with binary quartic forms.     

Poston and Stewart  study  quartic forms in two variables in \cite{PS1} and \cite{PS2}\\
\qquad \(f(x,y) = ax^4 + 4bx^3y + 6cx^2y^2 + 4dxy^3 + ey^4, \quad a,b,c,d,e \in {\mathbb R}. \)\\
Let \(\triangle\) be the discriminant of \(f(x,y)\) and  \(\mathscr{D} \subset {\mathbb R}^5\)     be the algebraic set given by \(\triangle = 0\).  The set \(\mathscr{W} = \mathscr{D} \cap S^5\)  is decomposed  into \(\mathscr{W}_1\) and \(\mathscr{W}_{\infty}\).  \(\mathscr{W}_1\) is diffeomorphic to  \(\mathscr{U}\).
They consider  a cross-section \(\mathscr{Q}\) of \(\mathscr{U}\).  The shape for \(\mathscr{Q}\) is called the 'Holy Grail' in catastrophe theory.  We will show in Proposition 5.8 that the set \(\mathscr{Q}\) coincides with the set of critical values of \(P_{A_3}^d\) restricted to \(R_3\)  by a coordinate transformation.  We will show that the set  decomposes into a tangent developable \(\mathcal{T}\) of an astroid in space and two real curves in Proposition 5.5.  See Figure 10.  The astroidalhedron \(\mathcal{A}\) is a part of \(\mathcal{T}\).

In Proposition 5.6, we will show that the rims of  \(\mathcal{T}\) join simply to the boundary of  \(\mathcal{M}\) in the hyperplane \(\Pi\) at infinity in \( {\mathbb P}^3( {\mathbb C})\).  Poston and Stewart  deal with the same situation by analyzing \(\mathscr{W}_{\infty}\) in  \({\mathbb R}^5\)  in \cite{PS1} and \cite{PS2}.  It is complicated.  But we consider the situation in \( {\mathbb P}^3( {\mathbb C})\) and so our description is simpler.
We will show that any ruling of \(\mathcal{T}\)  i.e. any tangent line to the astroid consists of two external rays and their extension and that any external ray which is not a ruling connects the astroidalhedron \(\mathcal{A}\) and M\(\ddot{o}\)bius strip  \(\mathcal{M}\).  

 In this paper,  we will  show not only static aspects of catastrophe theory but also dynamical aspects of catastrophe theory.  We know that the sets of critical values of \(P_{A_2}^d \) and \(P_{A_3}^d\) restricted to the real subspaces have relations with binary cubic forms and quartic forms,  respectively.   These relations will be generalized for general  maps  \(P_{A_n}^d\).

\section{The sets \(K(P_{A_3}^d) \) and \(J_3(P_{A_3}^d)\)}

In this section we determine the set  \(K(P_{A_3}^d) \)  of   bounded orbits and the third Julia set \(J_3(P_{A_3}^d)\).  We will show that the surface of \(K(P_{A_3}^d) \)  is a part of the tangent developable of an astroid in space. 

We consider the map  \(P_{A_3}^d\) defined by (1.1) and (1.2).

\[\mbox{Let} \quad P_{A_3}^d  = (g_1^{(d)}(z_1,z_2,z_3), g_2^{(d)}(z_1,z_2,z_3), g_3^{(d)}(z_1,z_2,z_3)).\]
Then, from \cite{L}(pp. 183-184)  we know that the set of polynomials \(\{g_j^{(d)}(z_1,z_2,z_3)\}\) satisfies the following  recurrence formulas :
\begin{equation}
\begin{split}
g_1^{(k)} = z_1g_1^{(k-1)} - z_2g_1^{(k-2)} + z_3g_1^{(k-3)} - g_1^{(k-4)} ,\\
g_1^{(j)} = \sum_{r=1}^{j} (- 1) ^{r-1}z_rg_1^{(j-r)} + (-1)^j(4-j) z_j, \quad (j = 0, 1, 2, 3), \quad z_0 = 1.
\end{split}
\end{equation}
\begin{equation}
 g_3^{(k)}(z_1,z_2,z_3) =  g_1^{(k)}(z_3,z_2,z_1).
\end{equation}
\begin{equation}
\begin{split}
g_2^{(k+6)} - z_2g_2^{(k+5)} + (z_1z_3-1)g_2^{(k+4)} - (z_1^2-2z_2 + z_3^2)g_2^{(k+3)}\\
 + (z_1z_3-1)g_2^{(k+2)} -  z_2g_2^{(k+1)} + g_2^{(k)} = 0 .
\end{split}
\end{equation}
Note that the formula in[14, p. 184] corresponding to (2.3) is incorrect.  The correct coefficient of \(g_2^{(k+3)}\)  is equal to \(-(z_1^2 - 2z_2 + z_3^2)\).\\
And the correct initial values are given by
 \[g_{2}^{(-2)} = z_2^2-2z_1z_2+2,\quad g_{2}^{(-1)} = z_2, \quad g_{2}^{(0)} = 6, \quad g_{2}^{(1)} = z_2,\]
\[g_{2}^{(2)} = g_{2}^{(-2)}, \quad g_{2}^{(3)} =z_2^3-3z_1z_2z_3+3z_3^2+3z_1^2-3z_2.\]

A polynomial endomorphism \(f\) of degree \(d\) is called {\it regular}  if the homogeneous part \(f_h\) of degree \(d\) satisfies \(f_h^{-1}(0) = \{0\}.\) 

 \begin{pro} \label{pro:A}
  \(P_{A_3}^d(z_1,z_2,z_3)\) is a regular polynomial endomorphism.
\end{pro}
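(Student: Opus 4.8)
The plan is to identify the top-degree homogeneous part $f_h = (\hat g_1, \hat g_2, \hat g_3)$ of $P_{A_3}^d$, where $\hat g_j$ denotes the degree-$d$ homogeneous part of $g_j^{(d)}$ in the ordinary grading (all of $z_1,z_2,z_3$ having weight one), and then to show that the system $\hat g_1 = \hat g_2 = \hat g_3 = 0$ admits only the trivial solution. Concretely, I expect that $\hat g_1 = z_1^d$ and $\hat g_3 = z_3^d$, while $\hat g_2$ is a genuinely mixed form whose only relevant feature is that its restriction to $\{z_1 = z_3 = 0\}$ equals $z_2^d$. Granting these facts, $\hat g_1 = 0$ forces $z_1 = 0$, $\hat g_3 = 0$ forces $z_3 = 0$, and the remaining equation $\hat g_2 = 0$ then reads $z_2^d = 0$, so that $f_h^{-1}(0) = \{0\}$, which is exactly the regularity condition.

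First I would treat $g_1^{(d)}$ by induction on $d$ using the order-four recurrence (2.1). The small cases $g_1^{(1)} = z_1$, $g_1^{(2)} = z_1^2 - 2z_2$, $g_1^{(3)} = z_1^3 - 3z_1z_2 + 3z_3$ each have degree equal to the index and leading term $z_1^{\,k}$. In $g_1^{(k)} = z_1 g_1^{(k-1)} - z_2 g_1^{(k-2)} + z_3 g_1^{(k-3)} - g_1^{(k-4)}$, only the first summand can reach degree $k$ (the others have degrees $k-1$, $k-2$, $k-4$), and it contributes $z_1\cdot z_1^{k-1} = z_1^k$ with no possible cancellation. Hence $\deg g_1^{(d)} = d$ and $\hat g_1 = z_1^d$, and the symmetry (2.2), $g_3^{(k)}(z_1,z_2,z_3) = g_1^{(k)}(z_3,z_2,z_1)$, immediately gives $\hat g_3 = z_3^d$.

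The heart of the matter is $g_2^{(d)}$, which is governed by the order-six recurrence (2.3). Here two summands, $z_2 g_2^{(k-1)}$ and $-(z_1z_3-1)g_2^{(k-2)}$, can both attain the top degree $k$, so the leading parts obey the order-two recurrence $\hat g_2^{(k)} = z_2\,\hat g_2^{(k-1)} - z_1z_3\,\hat g_2^{(k-2)}$, the remaining terms (including the coefficient $z_1^2 - 2z_2 + z_3^2$ applied to $g_2^{(k-3)}$) dropping the degree by at least one. This is the step I expect to be the main obstacle, since $\hat g_2$ is not a pure power and one must rule out a collapse of degree at each stage; verifying the recurrence against the initial data $\hat g_2^{(1)} = z_2$ and $\hat g_2^{(2)} = z_2^2 - 2z_1z_3$, and checking that it already reproduces $\hat g_2^{(3)} = z_2^3 - 3z_1z_2z_3$, secures the base of the induction. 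The decisive simplification is to restrict to $\{z_1 = z_3 = 0\}$: the quadratic coefficient $z_1z_3$ vanishes and the recurrence collapses to $\hat g_2^{(k)}(0,z_2,0) = z_2\,\hat g_2^{(k-1)}(0,z_2,0)$, which together with $\hat g_2^{(1)}(0,z_2,0) = z_2$ integrates to $\hat g_2^{(d)}(0,z_2,0) = z_2^d$. In particular $\hat g_2 \neq 0$, so $g_2^{(d)}$ indeed has degree exactly $d$, and combining the three restrictions as in the first paragraph yields $f_h^{-1}(0) = \{0\}$, proving that $P_{A_3}^d$ is regular.
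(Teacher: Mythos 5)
Your proof is correct and takes essentially the same route as the paper: both identify the top-degree homogeneous part as $f_h = (z_1^d, h_2^{(d)}, z_3^d)$ from the recurrences (2.1)--(2.3), with the middle component obeying exactly the order-two recurrence $h_2^{(d+2)} = z_2 h_2^{(d+1)} - z_1 z_3 h_2^{(d)}$, $h_2^{(1)} = z_2$, $h_2^{(2)} = z_2^2 - 2z_1z_3$, and then conclude $f_h^{-1}(0) = \{0\}$. Your restriction to $\{z_1 = z_3 = 0\}$, which gives $h_2^{(d)}(0,z_2,0) = z_2^d$, merely spells out the final deduction that the paper states without detail.
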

\begin{proof}
Let   \(f : = P_{A_3}^d(z_1,z_2,z_3).\)   From (2.1), (2.2) and (2.3),  we have   \(f_h = (z_1^{d}, h_2^{(d)},z_3^{d})\),  where \(h_2^{(d)}(z_1,z_2,z_3)\)  is a polynomial satisfying the recurrence formula :
\begin{equation}
\begin{split}
h_2^{(d+2)} = z_2h_2^{(d+1)} - z_1z_3h_2^{(d)}, \\
 h_2^{(1)} = z_2, \quad h_2^{(2)} = z_2^2 -2z_1z_3.
\end{split}
\end{equation}

Then we deduce \quad \(f_h^{-1}(0) = \{0\}\).
\end{proof}

Next we study the set 
 \[K( P_{A_3}^d) = \{  z \in {\bf \mathbb C}^3 : \mbox{the orbit} \quad \{( P_{A_3}^d)^n(z)\}\quad \mbox{is   bounded} \}.\]
Then \(K( P_{A_3}^d)\) \quad is described in the following form.

 \begin{pro} \label{pro:B}(\cite{V})
  \(K( P_{A_3}^d) = \{\Phi _{1}(t_1, t_2, t_3) : \mid t_1\mid = \mid t_2\mid = \mid t_3\mid =1 \}\). 
\end{pro}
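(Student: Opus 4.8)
The plan is to exploit the semiconjugacy recorded in the diagram (1.2), namely $(P_{A_3}^d)^n\circ\Phi_1=\Phi_1\circ\sigma_n$ where $\sigma_n(t_1,t_2,t_3)=(t_1^{d^n},t_2^{d^n},t_3^{d^n})$. The first observation I would make is that the three coordinates $z_1,z_2,z_3$ in (1.1) are exactly the first three elementary symmetric functions $e_1,e_2,e_3$ of the four numbers $t_1,t_2,t_3,t_4$ with $t_4=1/(t_1t_2t_3)$, and that $e_4=t_1t_2t_3t_4=1$. A short verification of these identities shows that $t_1,t_2,t_3,t_4$ are precisely the roots of the monic quartic
\[
Q_z(w)=w^4-z_1w^3+z_2w^2-z_3w+1 .
\]
Since the product of the roots is the constant term $1$, the roots are automatically nonzero, so $\Phi_1$ is surjective onto $\mathbb{C}^3$; in particular every $z\in K(P_{A_3}^d)$ can be written $z=\Phi_1(t)$ for some $t=(t_1,t_2,t_3)\in(\mathbb{C}^\ast)^3$, with $t_1,t_2,t_3,t_4$ the roots of $Q_z$.

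For the inclusion $\supseteq$, I would note that when $|t_1|=|t_2|=|t_3|=1$ we also have $|t_4|=1$. The set $\{\,\Phi_1(t):|t_1|=|t_2|=|t_3|=1\,\}$ is then the continuous image of a compact $3$-torus, hence compact and bounded, and it is invariant under the dynamics because $\sigma_n$ preserves the torus and $\Phi_1$ intertwines $\sigma_n$ with $(P_{A_3}^d)^n$. Thus the orbit of $\Phi_1(t)$ stays in this bounded set, giving $\Phi_1(t)\in K(P_{A_3}^d)$.

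For the reverse and essential inclusion $\subseteq$, the key device is to read the orbit off the coefficients of a family of quartics. Writing $z^{(n)}=(P_{A_3}^d)^n(z)$, the semiconjugacy shows that $z^{(n)}=\Phi_1(t_1^{d^n},t_2^{d^n},t_3^{d^n})$, whose fourth symmetric function is again $(t_1t_2t_3t_4)^{d^n}=1$; hence $t_1^{d^n},t_2^{d^n},t_3^{d^n},t_4^{d^n}$ are exactly the roots of $Q_{z^{(n)}}$. If the orbit of $z$ is bounded, the coefficients of all these quartics are bounded uniformly in $n$, and by the Cauchy root bound their roots are then bounded uniformly in $n$; that is, $|t_j|^{d^n}=|t_j^{d^n}|$ stays bounded as $n\to\infty$ for each $j$. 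Because $d^n\to\infty$, this forces $|t_j|\le 1$ for every $j$. Combined with $|t_1t_2t_3t_4|=1$, four numbers of modulus at most $1$ whose moduli multiply to $1$ must all have modulus exactly $1$, so $z=\Phi_1(t)$ lies in $\{\,\Phi_1(t):|t_1|=|t_2|=|t_3|=1\,\}$.

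The one delicate point, which this route is designed to avoid, is that trying to estimate the symmetric functions $e_i(t_1^{d^n},\dots,t_4^{d^n})$ directly is awkward: equal-modulus $t_j$ can produce cancellation, so boundedness of a single coordinate need not immediately pin down the moduli. Passing instead through the roots of $Q_{z^{(n)}}$ converts boundedness of the orbit into boundedness of all four roots simultaneously, which sidesteps cancellation entirely. I expect the main care to go into the surjectivity and root-correspondence bookkeeping of the first paragraph, while the limiting argument itself is immediate once phrased via the Cauchy bound.
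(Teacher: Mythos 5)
Your argument is correct, and it fills a genuine gap in the text: the paper offers no proof of this proposition at all, simply attributing it to Veselov \cite{V}, so there is no internal argument to compare yours against. Your key device --- observing that $(z_1,z_2,z_3)$ are the elementary symmetric functions $e_1,e_2,e_3$ of the four numbers $t_1,t_2,t_3,t_4$ with $e_4=t_1t_2t_3t_4=1$, so that $t_1,\dots,t_4$ are precisely the roots of $Q_z(w)=w^4-z_1w^3+z_2w^2-z_3w+1$ --- is sound, and it does all the work: it gives surjectivity of $\Phi_1$ (the roots of $Q_z$ are nonzero since their product is $1$), it makes the inclusion $\supseteq$ a one-line compactness-plus-forward-invariance statement, and it converts boundedness of the orbit $z^{(n)}=\Phi_1(t_1^{d^n},t_2^{d^n},t_3^{d^n})$ into a uniform bound on the roots $t_j^{d^n}$ of $Q_{z^{(n)}}$ via the Cauchy bound, whence $|t_j|\le 1$ for all $j$ because $d^n\to\infty$, and then $\prod_j|t_j|=1$ forces $|t_j|=1$ for every $j$. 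Your closing remark is also well taken: estimating the symmetric functions of the $t_j^{d^n}$ directly is delicate because of cancellation among equal-modulus terms, and passing to the roots of the iterated quartic is exactly what sidesteps this. The only step worth writing out explicitly (you use it implicitly) is the induction $(P_{A_3}^d)^n\circ\Phi_1(t)=\Phi_1(t_1^{d^n},t_2^{d^n},t_3^{d^n})$ from diagram (1.2), which is immediate. The payoff of your route is a short, self-contained, purely algebraic proof of a statement the paper can only cite.
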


The set   \(K(P_{A_3}^d(z_1,z_2,z_3)) \) is given by
\begin{equation}
\begin{split}
\left \{
   \begin{array}{lll}
   z_1 = e^{i\alpha } + e^{i\beta } + e^{i\gamma } + e^{i( -\alpha -\beta -\gamma) } ,\\
   z_2 = e^{i(\alpha +\beta )} +e^{i(\alpha +\gamma  )} + e^{i(\gamma +\beta )} +e^{-i(\beta+\gamma  )}+ e^{-i(\gamma +\alpha  )} + e^{-i(\alpha +\beta )} ,\\
   z_3 = e^{-i\alpha } + e^{-i\beta } + e^{-i\gamma } + e^{i(\alpha +\beta +\gamma )} ,
   \end{array}
   \right.
\end{split}
\end{equation}

   \[-\alpha -\beta -\gamma  \leq \alpha   \leq \beta \leq \gamma   \leq 2\pi  -\alpha -\beta  - \gamma .  \quad \mbox{See \cite{EL}}.\]
We call
\(R' := \{(\alpha , \beta , \gamma ) : -\alpha -\beta -\gamma \leq  \alpha   \leq \beta \leq  \gamma   \leq 2\pi  -\alpha -\beta  - \gamma \}   \)
the {\it natural domain}.\\
\begin{figure}[htbp]
\begin{tabular}{cc}
\begin{minipage}{0.5\hsize}
\begin{center}
\includegraphics[scale=0.29]{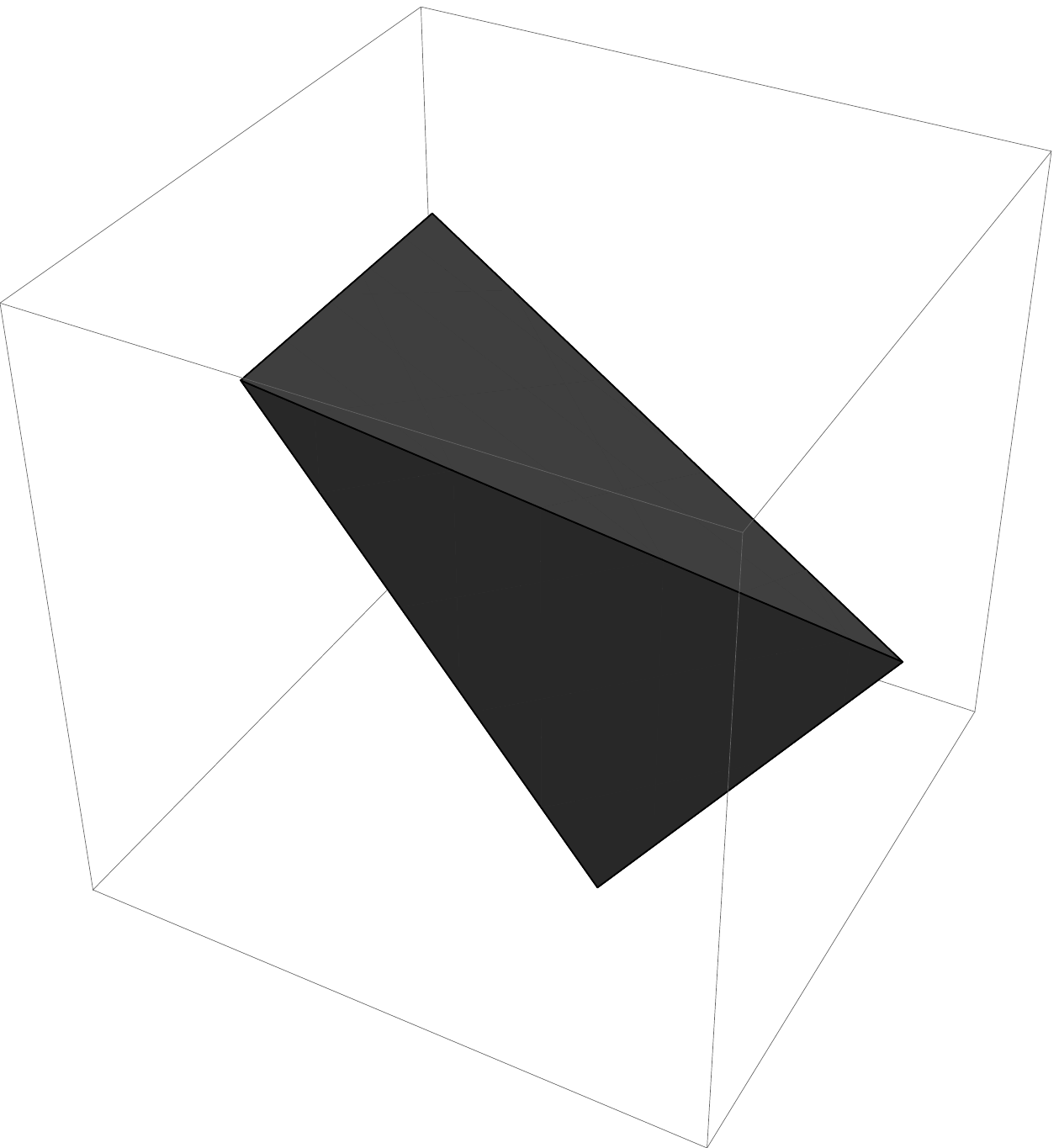} \\
\caption{ The natural domain \(R'\).  }
\label{fig1}
\end{center}
\end{minipage}
\begin{minipage}{0.55\hsize}
\begin{center}
\includegraphics[scale=0.29]{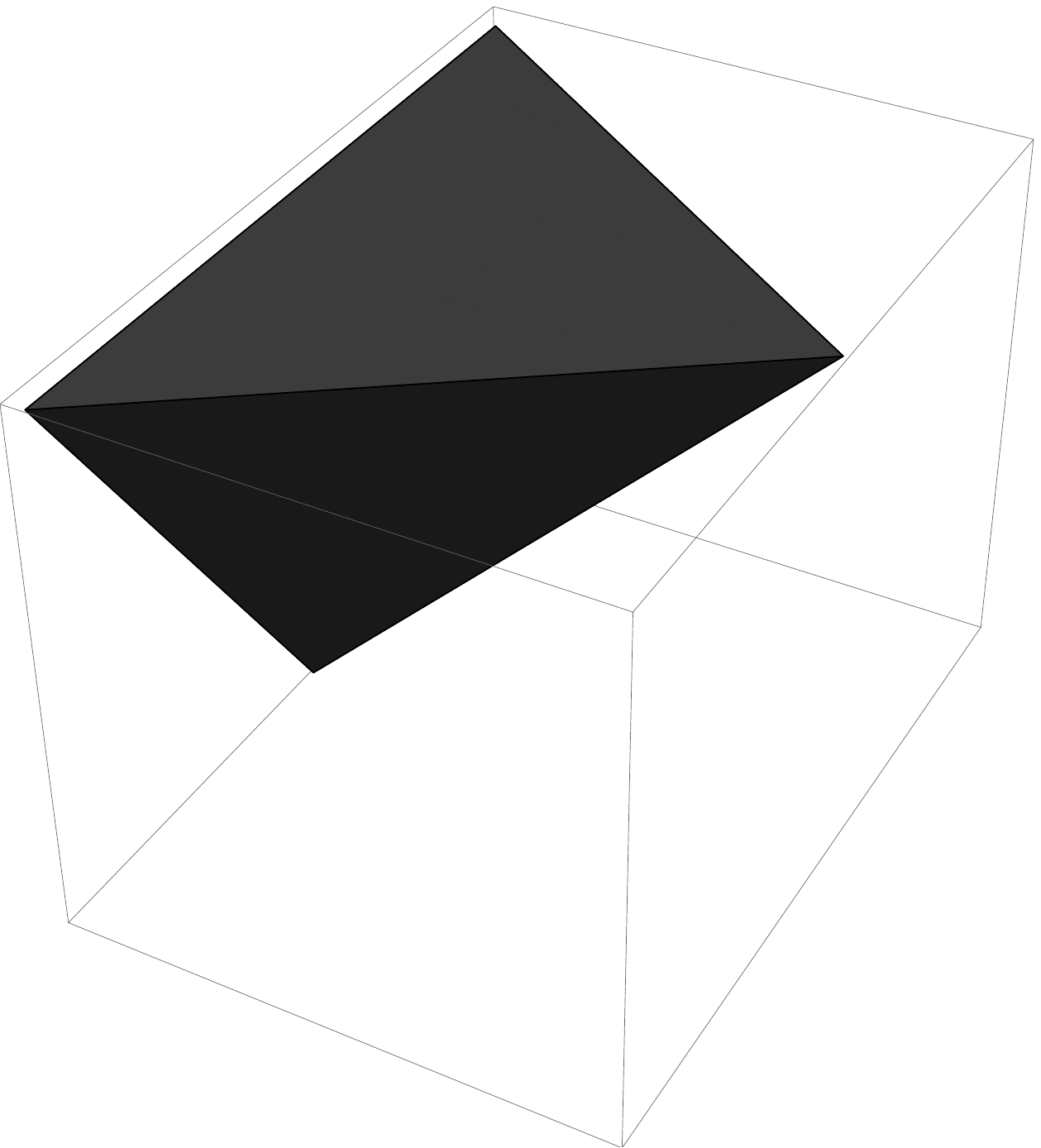}\\
\caption{ The fundamental region $R$.}
\label{kfig2..eps}
\end{center}
\end{minipage} 
\end{tabular}
\end{figure} 

We denote the real three-dimensional subspace
  \(\{(z_1,z_2,\bar z_1) : z_1 \in {\bf \mathbb C},  z_2 \in {\bf \mathbb R}\}\) by \(R_3\).  
Then
 \( K( P_{A_3}^d)  \subset  R_3\) . 
\(R_3\) is invariant under the maps  \(P_{A_3}^d\) .  Sometimes we regard \(R_3\) as \({\bf \mathbb R}^3\) .

In order to facilitate computations we transform the Euclidean coordinates \((\alpha , \beta , \gamma )\) into new coordinates \((s_1, s_2, s_3)\) concerning the root system of type \(A_3\). 
 
A base \(\{\alpha_j\}\) for the root system and fundamental weights \( \varpi  _j\)  of type \(A_3\) are given by
 \[\alpha _1 =  (-\frac{1}{\sqrt 2},  -1, \frac 1{\sqrt 2}), \enskip \alpha _2 = (\sqrt 2, 0, 0),\enskip \alpha _3 = (-\frac{1}{\sqrt 2},  1, \frac 1{\sqrt 2}), \]
 \[\varpi  _1 = (0,  -\frac 1{2}, \frac 1{\sqrt 2} ), \enskip \varpi  _2 = (\frac{1}{\sqrt 2},  0, \frac 1{\sqrt 2}), \enskip \varpi  _3 = (0, \frac{1}{ 2},  \frac 1{\sqrt 2} ).\]

One of the alcoves of \(A_3\) is the closed region \(R\) bounded by the polyhedron \(\sqrt 2\pi\)  \((O, \varpi _1, \varpi _2, \varpi _3)\). We call the region \(R\) the {\it fundamental region}.  The region  \(R'\) is transformed to  \(R\) by a transformation \(T\).
The matrix associated with the transformation \(T\) from the \((\alpha, \beta, \gamma)\)  space to \((s_1, s_2, s_3)\) space is given by
\begin{equation}
\begin{split}
\left(\begin{array}{cc} s_1\\ \\ s_2\\ \\ s_3\\  \end{array}
\right) =
\left(\begin{array}{ccc} -\frac 12, & \frac 12, & 0
\\ \\ -\frac 1{\sqrt2}, & -\frac 1{\sqrt2}, & 0
\\ \\ \frac 12, & \frac 12, & 1\\  \end{array} \right)
\left(\begin{array}{cc} \alpha\\ \\ \beta\\ \\ \gamma\\  \end{array}
\right). 
\end{split}
 \end{equation}

The region  \(R\) is a closed region bounded by a \((\sqrt 3,  \sqrt 3, 2)\)-{\it tetrahedron}.  That is, it has four faces which are congruent with each other and the ratios of whose edge lengths are equal to \(\sqrt 3 :\sqrt 3 : 2\).  Coxeter\cite{Co}  proved that there exist only seven types of reflective space-fillers.  It is one of them.  A convex polyhedron \(P\)  is called a reflective space-filler if its congruent copies tile the 3-space in such a way that\\
\quad (1) the tilling is face-to face,\\
\quad (2) if the intersection  \(P_1 \cap P_2\)  of two of those copies has a face in common, \\
\qquad then   \(P_1 \) is the mirror-image of  \(P_2\)  in the common face, and\\
\quad (3) each of the dihedral angels of  \(P\) is  \(\pi/k\)  for integer  \(k \ge 2\).

We consider the tilling of the \((s_1, s_2, s_3)\) space by   
\((\sqrt 3, \sqrt 3, 2)\)-tetrahedrons.  The region \(R\) is a closed region bounded by one of these tetrahedrons with vertices 
\[O = (0, 0, 0), \quad A_1 = (0, -\pi/\sqrt 2, \pi), \quad A_2 = (\pi, 0, \pi), \quad A_3 = (0, \pi/\sqrt 2, \pi).\]
 
Let  \(\mathcal{G}\) be the group of isometrics which is generated by the reflections in the faces of these tetrahedrons. 

The reflection in the hyperplane through the origin orthogonal to \(\alpha_i\) is given by 
\[w_{\alpha_i}(x) = x - \frac{2(x, \alpha_i)}{(\alpha_i, \alpha_i)}\alpha_i, \quad (i = 1, 2, 3), \quad x \in   {\mathbb R^3}.\]
Set \enskip \(J_i : = w_{\alpha_i}\).  Then  \(J_i\) is the reflection in the face \(\triangle OA_jA_k\) of the tetrahedron \(\partial R\) with \(\{i,  j, k\} = \{1, 2, 3\}\).  Set   \(J_0(s_1, s_2, s_3) = (s_1, s_2, 2\pi - s_3)\).  Then \(J_0\) is the reflection in the face \(\triangle A_1A_2A_3\).  It is known e.g. in  \cite{Bo}  that the reflections \(J_0, J_1, J_2\)  and  \(J_3\)  generate the group \(\mathcal{G}\).  Set \enskip \(X = \{e^{i\alpha},  e^{i\beta},  e^{i\gamma}, e^{-i(\alpha+\beta+\gamma)}\}\).
Then by the direct computations using (2.6) we can prove that each \(J_k\) acts on the set \(X\) as a permutation, for \(k = 0, 1, 2, 3.\)    For any element  \((s_1, s_2, s_3)\) in the  space,  these exists an element \(J\) in the group \(\mathcal{G}\) such that \enskip \(J(s_1, s_2, s_3) \in R\). 
 \begin{pro} \label{pro:B2}
 For \(k = 0, 1, 2, 3,\)  let the images of  \((s_1, s_2, s_3)\) and 
 \(J_k(s_1, s_2, s_3) \) under the  inverse of the transformation  \(T\) be  \((\alpha, \beta, \gamma)\) and \((\alpha', \beta', \gamma')\).  Then we have \enskip \[\Phi_1(e^{i\alpha},  e^{i\beta},  e^{i\gamma}) = \Phi_1(e^{i\alpha'},  e^{i\beta'},  e^{i\gamma'}).\]
\end{pro}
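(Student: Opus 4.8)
The plan is to reduce the claim to the invariance of elementary symmetric functions under permutation. First I would note that, by (1.1), each component of $\Phi_1(t_1,t_2,t_3)$ is an elementary symmetric function of the four quantities $t_1, t_2, t_3, t_4$ with $t_4 = 1/(t_1t_2t_3)$: using the relation $t_1t_2t_3t_4 = 1$ one rewrites the reciprocals in (1.1) and finds that $z_1, z_2, z_3$ are precisely the first, second and third elementary symmetric polynomials $e_1, e_2, e_3$ in $t_1,t_2,t_3,t_4$ (with $e_4 = 1$). Consequently, on the unit torus the triple $\Phi_1(e^{i\alpha}, e^{i\beta}, e^{i\gamma})$ is determined by the unordered set $X = \{e^{i\alpha}, e^{i\beta}, e^{i\gamma}, e^{-i(\alpha+\beta+\gamma)}\}$ alone, and any rearrangement of the four members of $X$ leaves all three outputs unchanged.

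It therefore suffices to show that, for each $k$, the induced map $(\alpha,\beta,\gamma) \mapsto (\alpha',\beta',\gamma') = T^{-1}J_kT(\alpha,\beta,\gamma)$ merely permutes the four numbers of $X$, that is, $X' = \{e^{i\alpha'}, e^{i\beta'}, e^{i\gamma'}, e^{-i(\alpha'+\beta'+\gamma')}\}$ equals $X$ as a set. This is exactly the permutation property announced just before the statement, which I would confirm by inverting the matrix in (2.6). For instance $J_0$ fixes $s_1, s_2$ and sends $s_3 \mapsto 2\pi - s_3$, so $(\alpha',\beta',\gamma') = (\alpha,\beta,\,2\pi-\alpha-\beta-\gamma)$ and $X'$ is $X$ with $e^{i\gamma}$ and $e^{-i(\alpha+\beta+\gamma)}$ interchanged; similarly $J_2 = w_{\alpha_2}$ is the reflection $s_1 \mapsto -s_1$ across the wall orthogonal to $\alpha_2 = (\sqrt2,0,0)$, giving $(\alpha',\beta',\gamma') = (\beta,\alpha,\gamma)$, which interchanges $e^{i\alpha}$ and $e^{i\beta}$. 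Treating $J_1 = w_{\alpha_1}$ and $J_3 = w_{\alpha_3}$ by the same procedure --- now with root vectors whose three coordinates are all nonzero --- again exhibits $X'$ as a reordering of $X$.

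Putting the two steps together yields the conclusion: each $J_k$ only rearranges the members of $X$, and since $z_1 = e_1$, $z_2 = e_2$, $z_3 = e_3$ are symmetric in those members, we obtain $\Phi_1(e^{i\alpha}, e^{i\beta}, e^{i\gamma}) = \Phi_1(e^{i\alpha'}, e^{i\beta'}, e^{i\gamma'})$. The only genuine labor is the bookkeeping of the middle step --- inverting (2.6) and verifying $X' = X$ --- which is immediate for $J_0$ and $J_2$ and only mildly more tedious for $J_1$ and $J_3$; no dynamical information is needed, the statement being a purely algebraic manifestation of the symmetry built into $\Phi_1$.
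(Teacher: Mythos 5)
Your proposal is correct and follows the same route as the paper: the paper's proof likewise rests on the observation (stated just before the proposition) that each $J_k$ permutes the set $X = \{e^{i\alpha}, e^{i\beta}, e^{i\gamma}, e^{-i(\alpha+\beta+\gamma)}\}$, together with the fact that the components of $\Phi_1$ in (2.5) are symmetric in the elements of $X$ (being the elementary symmetric functions $e_1, e_2, e_3$ of $t_1,t_2,t_3,t_4$). Your explicit verifications for $J_0$ and $J_2$ (and the indicated ones for $J_1$, $J_3$) are exactly the ``direct computations using (2.6)'' that the paper invokes.
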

\begin{proof}
The terms in  \(z_i\enskip (i = 1, 2, 3)\) in (2.5) are invariant under any \(J_k\).
\end{proof}
 We study the surface of \(K(P_{A_3}^d)\).
We define a coordinate system \((p_1, p_2, q)\) of \(R_3\) by
\[p_1(1,0,0,0,1,0) + p_2(0,1,0,0,0,-1) + q(0,0,1,0,0,0).\]
We consider the map \(\Phi _{1} \enskip\)  restricted to \(R'\) onto \enskip \(K(f) \subset R_3\).   We denote it by  \(\varphi_1\).  The mapping \quad \(\varphi_1 : R' \to K(f)\) \quad is given by
\begin{equation}
\begin{split}  
   p_1 = Re(e^{i\alpha } + e^{i\beta } + e^{i\gamma } + e^{i( -\alpha -\beta -\gamma) }) ,\qquad \qquad \qquad \qquad\\
 p_2 = Im(e^{i\alpha } + e^{i\beta } + e^{i\gamma } + e^{i( -\alpha -\beta -\gamma) }) , \qquad \qquad \qquad \qquad\\
   q = e^{i(\alpha +\beta )} +e^{i(\alpha +\gamma  )} + e^{i(\gamma +\beta )} +e^{-i(\beta+\gamma  )}+ e^{-i(\gamma +\alpha  )} + e^{-i(\alpha +\beta )} .\\
\end{split}   
\end{equation}
\(\varphi_1\)  is a diffeomorphism from \(int(R')\) to \(int(K(f))\)  and \(\partial R'\) is mapped onto \(\partial K(f)\) injectively.
\begin{pro} \label{pro:C}.
The surface of \(K(P_{A_3}^d)\) is a part of the tangent developable of an astroid in space.   The surface is given by 
\[\chi(u,v) = (4\cos^3 u , 4\sin^3 u,  6\cos 2u) + v(\cos u ,  -\sin u , 2 ) ,\qquad \qquad \]
\[ (-2-2\cos 2u \leq v  \leq  2-2\cos 2u ).\qquad \qquad \qquad \qquad\]
\end{pro}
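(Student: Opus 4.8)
The plan is to combine the stated fact that $\varphi_1$ carries $\partial R'$ bijectively onto $\partial K(f)$ with the observation that $\partial R'$ is exactly the collision locus of the four points. The faces of the tetrahedron $R'$ are cut out by the four equalities $\alpha=\beta$, $\beta=\gamma$, $\alpha=-\alpha-\beta-\gamma$ and $\gamma=2\pi-\alpha-\beta-\gamma$; in terms of the unit-modulus points $e^{i\alpha},e^{i\beta},e^{i\gamma},e^{-i(\alpha+\beta+\gamma)}$ (whose product is $1$), each equality says that two of these four points coincide. By Proposition 2.3 every configuration with a repeated point is carried by some $J\in\mathcal{G}$ to a point of $\overline{R'}$ without changing its $\Phi_1$-image, so $\partial K(f)$ is precisely the set of values of $(p_1,p_2,q)$ in (2.7) arising from four points on the unit circle, of product $1$, at least two of which agree.

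I would parametrize this collision locus symmetrically: write the repeated value as $e^{iu}$ and the remaining two points as $e^{i(-u+\psi)}$ and $e^{i(-u-\psi)}$, so that the product of all four is automatically $1$. Substituting into (2.7) and collapsing the conjugate pairs gives
\[
p_1=2\cos u\,(1+\cos\psi),\qquad p_2=2\sin u\,(1-\cos\psi),\qquad q=2\cos 2u+4\cos\psi.
\]
Setting $v:=2\cos\psi-2\cos 2u$ I would then check, using $4\cos^2u-2=2\cos2u$ and $2-4\sin^2u=2\cos2u$, that these three expressions agree termwise with the components of $\chi(u,v)$. Thus $\partial K(f)$ is exactly the image of $\chi$.

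To recognize the image as a tangent developable, set $\gamma(u)=(4\cos^3u,\,4\sin^3u,\,6\cos2u)$, i.e.\ the planar astroid $(4\cos^3u,4\sin^3u)$ lifted to height $6\cos2u$, an astroid in space. A one-line computation gives $\gamma'(u)=-6\sin2u\,(\cos u,-\sin u,2)$, so the ruling direction $(\cos u,-\sin u,2)$ of $\chi$ is parallel to $\gamma'(u)$ at every $u$. Hence each ruling $v\mapsto\chi(u,v)$ is the tangent line to $\gamma$ at $\gamma(u)$, and the surface is (a part of) the tangent developable of $\gamma$, as claimed.

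Finally, the $v$-range falls out of the parametrization: interchanging the two non-repeated points replaces $\psi$ by $-\psi$, so we may take $\psi\in[0,\pi]$, whence $\cos\psi$ ranges over $[-1,1]$ and $v=2\cos\psi-2\cos2u$ ranges over exactly $[-2-2\cos2u,\,2-2\cos2u]$; the endpoints $\psi=0,\pi$ are the configurations in which the two remaining points also collide. The step needing genuine care is the first one—confirming that $\partial R'$ is the full collision locus and that, through the permutation action of $\mathcal{G}$ furnished by Proposition 2.3, the symmetric parametrization by $(u,\psi)$ is onto $\partial K(f)$ even though each individual ordered face of $R'$ realizes only a sub-arc of the doubled-point parameter; once this surjectivity is granted, the trigonometric identities and the tangency computation are routine. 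The apparent division by $\cos u$ or $\sin u$ when matching with $\chi$ is harmless, being absorbed by continuity at the four cusps $u\in\{0,\pi/2,\pi,3\pi/2\}$ of the astroid.
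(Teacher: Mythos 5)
Your proof is correct, and its overall strategy coincides with the paper's: identify the boundary of \(K(f)\) with the image under (2.7) of the collision locus (the faces of \(\partial R'\)), obtain a ruled surface, and reparametrize it as a tangent developable. Where you differ is in the two technical steps. First, the paper fixes a single face (\(-\alpha-\beta-\gamma=\alpha\)), obtains (2.8), and disposes of the other faces with the one-line remark ``from the properties of reflections of \(R\)''; you instead parametrize the whole collision locus symmetrically (repeated point \(e^{iu}\), remaining points \(e^{i(-u\pm\psi)}\)) and justify via Proposition 2.3 that this captures all of \(\partial K(f)\) — a cleaner and more complete version of the same reduction, and indeed your \((u,\psi)\) parametrization is exactly the paper's (2.8) under \(u=\alpha\), \(\psi=\alpha+\beta\). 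Second, the paper obtains the astroid form \(\chi(u,v)\) and the \(v\)-range by invoking striction-curve theory (Lemma 17.7 of Gray's book), whereas you guess the change of variables \(v=2\cos\psi-2\cos 2u\), verify it termwise with the identities \(4\cos^2u-2=2\cos 2u\) and \(2-4\sin^2u=2\cos 2u\), and then check \(\gamma'(u)=-6\sin 2u\,(\cos u,-\sin u,2)\) to see that the rulings are tangent lines. Your route is more elementary and entirely self-contained (everything is a trigonometric identity one can check by hand, and the \(v\)-range falls out of \(\cos\psi\in[-1,1]\)); the paper's route is shorter and explains conceptually where the astroid comes from, as the striction (edge of regression) curve of the ruled surface, rather than producing it by verification. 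One small remark: your worry about ``division by \(\cos u\) or \(\sin u\)'' is unnecessary — the matching reduces to the two displayed identities holding identically, so no division ever occurs and no continuity argument at the cusps is needed.
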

\begin{proof}
To get the surface,  we substitute an inequality sign for an equality sign in the definition of \(R'\).  That is, we set \quad \(-\alpha-\beta-\gamma = \alpha\).   By (2.7) and the above equality, we have

\begin{equation}
\begin{split}
(p_1, p_2, q) = 2(\cos\alpha, \sin\alpha, \cos2\alpha) + 2\cos(\alpha+\beta)(\cos \alpha, -\sin\alpha, 2), \\ 
(0 \le \alpha < 2\pi, \quad 0 \le \alpha+\beta < \pi).
\end{split}
\end{equation}
From the properties of reflections of \(R\),  we see that (2.8) represents the surface of \(K(P_{A_3}^d)\).    
It is a ruled surface.  Using a striction curve (\cite{G}, 17.3),  
we reparametrize the ruled surface.
Set
\[\tilde{\chi}(u,v) = 2(\cos u , \sin u,  \cos 2u) + 2v(\cos u ,  -\sin u , 2 ) .\qquad \qquad \]
Then from Lemma 17.7 in \cite{G}, we have a reparametrization
\[\chi(u,v) = (4\cos^3 u , 4\sin^3 u,  6\cos 2u) + v(\cos u ,  -\sin u , 2 ) ,\qquad \qquad \]
\[(-2-2\cos 2u \le v \le 2-2 \cos 2u) .\qquad \qquad \]
The base curve \(\{(4\cos^3 u , 4\sin^3 u,  6\cos 2u) : 0 \le u < 2\pi\}\) is an astroid in space and \(\chi(u,v)\) is a part of the tangent developable of the astroid. 
\end{proof}
\begin{figure}[htbp]
\begin{tabular}{cc}
\begin{minipage}{0.45\hsize}
\begin{center}
\includegraphics[scale=0.39]{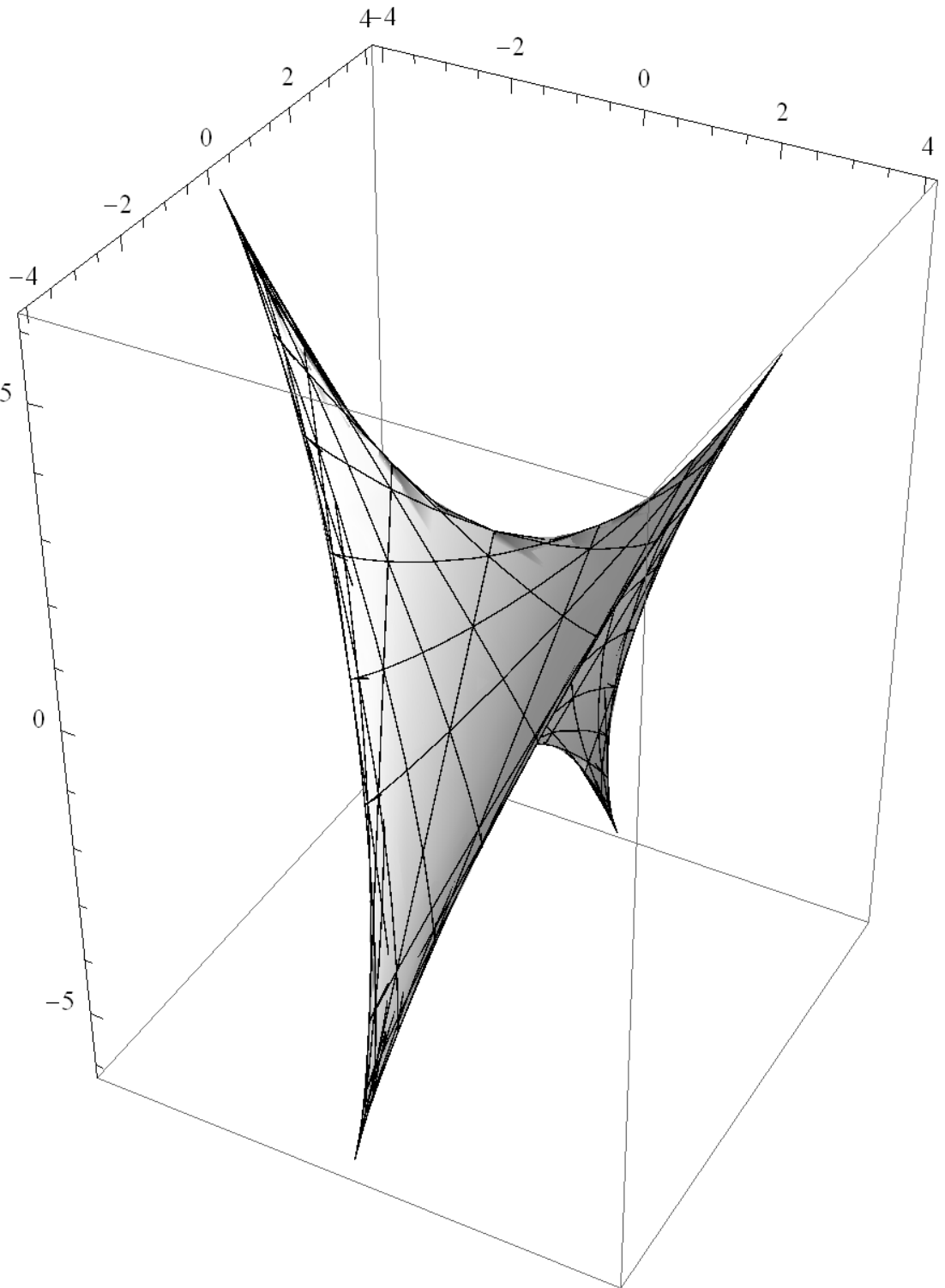} \\
\caption{ An astroidalhedron. }
\label{fig3}
\end{center}
\end{minipage}
\begin{minipage}{0.5\hsize}
\begin{center}
\includegraphics[scale=0.28]{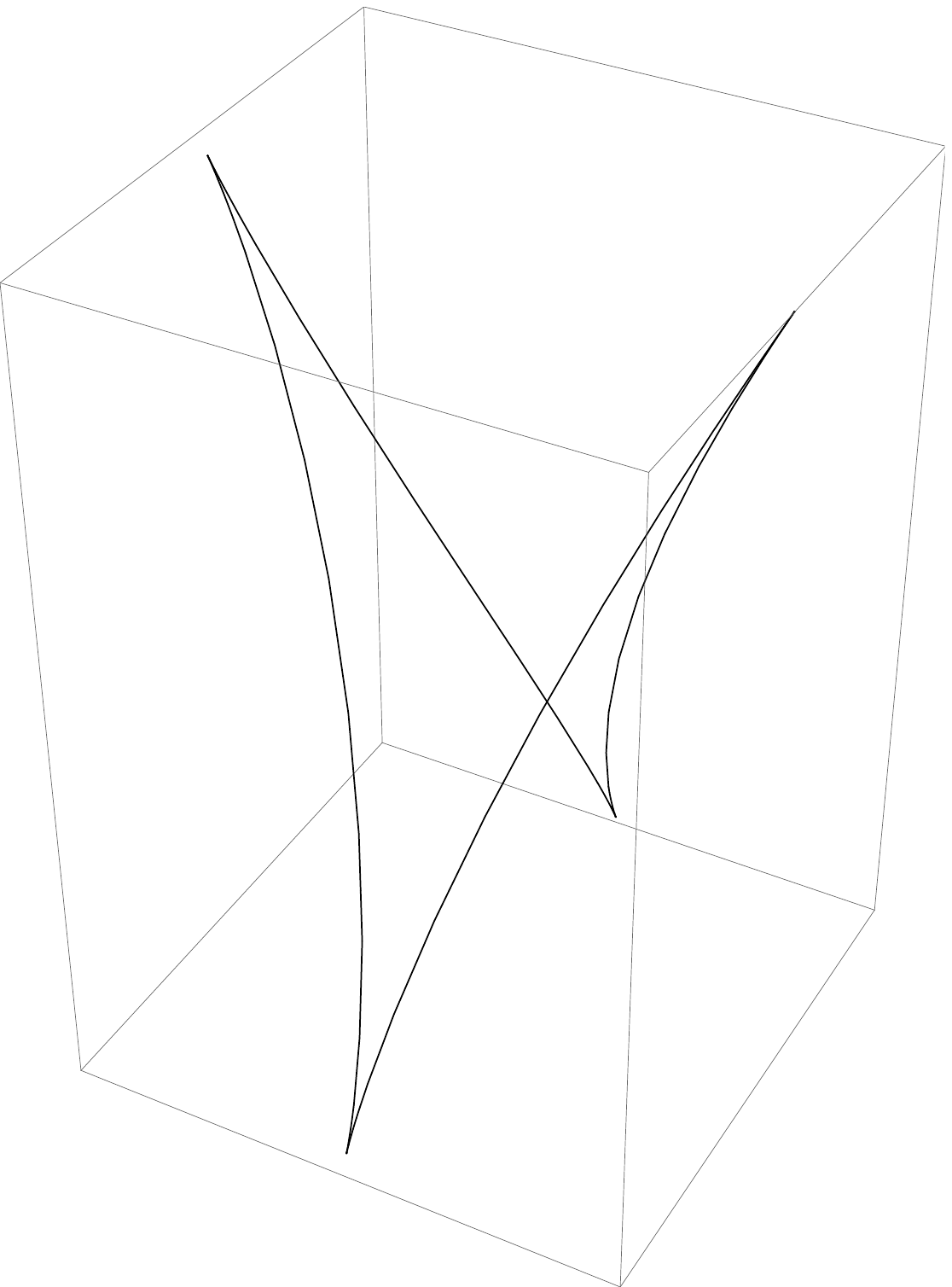}\hspace{1.5cm} \\
\caption{ An astroid in space. }
\label{fig4}
\end{center}
\end{minipage} 
\end{tabular}
\end{figure} 
The astroid consists of edges of the surface.  We call the ruled surface an {\it astroidalhedron}  and denote it by 
\(\mathcal{A}\).
By  \cite{KR}, we see that those  edges except for four  vertices of \(\mathcal{A}\) are cuspidal edges.

Now we begin with the study of Julia sets.  In Section 1 we define the \(l\)-th Julia set \(J_l\).  In our situation we have three kinds of Julia sets \(J_1, J_2\)  and  \(J_3\).   Clearly, \(J_1 \supset J_2 \supset J_3\).   We begin with the study of   \(J_3\).  We will show that  \( J_3 = K(P_{A_3}^d).\)  To show this we use a theorem of Briend and Duval \cite{BD}.  It reads as follows.
Let \(P_n\) denote the set of repelling periodic points of period \(n\).  The number of the elements in \(P_n\)  is  \(d^{3n}\).  Let  \( f = P_{A_3}^d.\)   Set  \(\mu = (T_f)^3\).
\[\mbox{Then the sequence of measures}\quad \mu_n : = d^{-3n}
\sum_{ a \in P_n} \delta_a \quad \mbox{converges weakly to}\quad \mu. \]

From the above diagram (1.2),  we have the following lemma.
\begin{lemma} \label{lemma:D}
Any periodic point of \(f\) in \(int(K(f))\) is repelling.
\end{lemma}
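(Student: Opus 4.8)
The plan is to exploit the semiconjugacy recorded in diagram (1.2), namely $f\circ\Phi_1=\Phi_1\circ\psi_d$ with $\psi_d(t_1,t_2,t_3)=(t_1^d,t_2^d,t_3^d)$, and to transport the computation of multipliers to the torus $\mathbb{T}=\{|t_1|=|t_2|=|t_3|=1\}$, on which the lift $\psi_d$ is, in angular coordinates, the linear expanding map $\theta\mapsto d\theta$. Let $z_0\in\operatorname{int}(K(f))$ be periodic of period $n$. By Proposition \ref{pro:B} there is $t^{(0)}\in\mathbb{T}$ with $\Phi_1(t^{(0)})=z_0$, and since $z_0\in\operatorname{int}(K(f))$ the point $t^{(0)}$ lies in the interior of the fundamental domain $R'$ (the remark after (2.7)); there the four numbers $t_1,t_2,t_3,1/(t_1t_2t_3)$ are pairwise distinct, so $t^{(0)}$ is a regular point of $\Phi_1$ and the complex Jacobian $D\Phi_1(t^{(0)})$ is invertible. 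I then set $t^{(n)}:=\psi_d^{\,n}(t^{(0)})$; since $\Phi_1(t^{(n)})=f^n(z_0)=z_0=\Phi_1(t^{(0)})$, the points $t^{(n)}$ and $t^{(0)}$ lie in the same fibre of $\Phi_1|_{\mathbb{T}}$, so $t^{(n)}=\sigma(t^{(0)})$ for some element $\sigma$ of the covering group, i.e.\ an element of the group generated by the reflections $J_k$ of Proposition \ref{pro:B2}, which acts on $X$ by a permutation and hence on $\mathbb{T}$ by an isometry.

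Differentiating the iterated identity $f^n\circ\Phi_1=\Phi_1\circ\psi_d^{\,n}$ at $t^{(0)}$, together with the invariance $\Phi_1\circ\sigma=\Phi_1$ differentiated at $t^{(0)}$, yields, as complex $3\times3$ matrices,
\[
Df^n(z_0)=D\Phi_1(t^{(0)})\,\bigl[D\sigma(t^{(0)})\bigr]^{-1}\,D\psi_d^{\,n}(t^{(0)})\,\bigl[D\Phi_1(t^{(0)})\bigr]^{-1},
\]
so the multipliers of $z_0$ are exactly the eigenvalues of $M:=\bigl[D\sigma(t^{(0)})\bigr]^{-1}D\psi_d^{\,n}(t^{(0)})$. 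The crucial observation is that $\mathbb{T}$ is totally real in $(\mathbb{C}\setminus\{0\})^3$, so its real tangent space $V=T_{t^{(0)}}\mathbb{T}$ satisfies $V\oplus iV=\mathbb{C}^3$; because $\psi_d$ and $\sigma$ preserve $\mathbb{T}$, their holomorphic derivatives send $V$ into the real tangent spaces of $\mathbb{T}$ at the relevant points, and one checks directly that the composite $M$ maps $V$ into $V$. A complex-linear endomorphism preserving a real form is the complexification of its restriction, so the complex eigenvalues of $M$ coincide with the eigenvalues of the real operator $M|_V$.

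It then remains to evaluate $M|_V$ in the flat, Weyl-invariant metric on $\mathbb{T}$ induced by the $A_3$ root system used in (2.6). In that metric the lift of $\psi_d^{\,n}$ is multiplication by $d^n$, so $D\psi_d^{\,n}|_V$ is $d^n$ times an isometry, while $\sigma$ acts by an isometry of $\mathbb{T}$, whence $[D\sigma]^{-1}$ also restricts to an isometry; thus $M|_V=d^n\,O$ with $O$ orthogonal, and every eigenvalue of $M|_V$ has modulus $d^n\ge 2^n>1$. Consequently all multipliers of $z_0$ have modulus $d^n$ and $z_0$ is repelling. I expect the main obstacle to be the passage between the real and the complex pictures: one must verify that $M$ genuinely leaves the totally real subspace $V$ invariant, and that the covering element $\sigma$—realised as the Laurent-monomial transformation of $(t_1,t_2,t_3)$ coming from a permutation of $t_1,t_2,t_3,1/(t_1t_2t_3)$—acts as an isometry of $\mathbb{T}$ in the same metric for which $\psi_d$ is conformal. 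Once these two structural facts, both built into the $A_3$ set-up of Proposition \ref{pro:B2}, are in place, the eigenvalue estimate is immediate.
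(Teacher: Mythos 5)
Your proposal is correct and takes essentially the same approach as the paper: the paper deduces the lemma directly from the semiconjugacy diagram (1.2), i.e.\ from $f\circ\Phi_1=\Phi_1\circ(t\mapsto t^d)$, which is precisely the structure your argument exploits, transported to the unit torus where the power map is angular multiplication by $d$. The paper offers no further detail, so your write-up (regularity of $\Phi_1$ at interior points, the deck transformation $\sigma$, the totally real tangent space, and the Weyl-invariant metric making $M|_V$ equal to $d^n$ times an orthogonal map) simply supplies the verification the paper leaves implicit.
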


Next we consider the distribution of repelling periodic points.  Using a conjugacy from \(K(f)\) 
 to \(R\), we study the distribution of repelling periodic points.    We will show that the repelling periodic points are dense and equidistributed in \(R\).

  Combining the inverse of \(\varphi_1\) with the coordinate transformation \(T\),  we get a continuous map \(\varphi\)  from \(K(f)\) to \(R\) such that \(\varphi\)  restricted to \(int(K(f))\) is a diffeomorphism.  We set   \(\rho : = \varphi \circ f \circ
\varphi^{-1}\).  Then \(\rho(s_1, s_2, s_3) = d(s_1, s_2, s_3).\) 

To study the distribution of periodic points of \(\rho\),  we use an argument similar to that used in Proposition 2.2 of \cite{U}.  

We first consider the case \(d = 2\).
The image of the fundamental region \(R\) under \(\rho\) and division of it into eight  \((\sqrt 3, \sqrt3, 2)\)-tetrahedrons are depicted in Figure 5.

For any \(d \ge 3\),  we combine the three adjacent \((\sqrt 3, \sqrt3, 2)\)-tetrahedrons  which yield a triangular prism. A small ball denotes the origin.  See Figure 6.
\begin{figure}[htbp]
\begin{tabular}{cc}
\begin{minipage}{0.5\hsize}
\begin{center}
\includegraphics[scale=0.37]{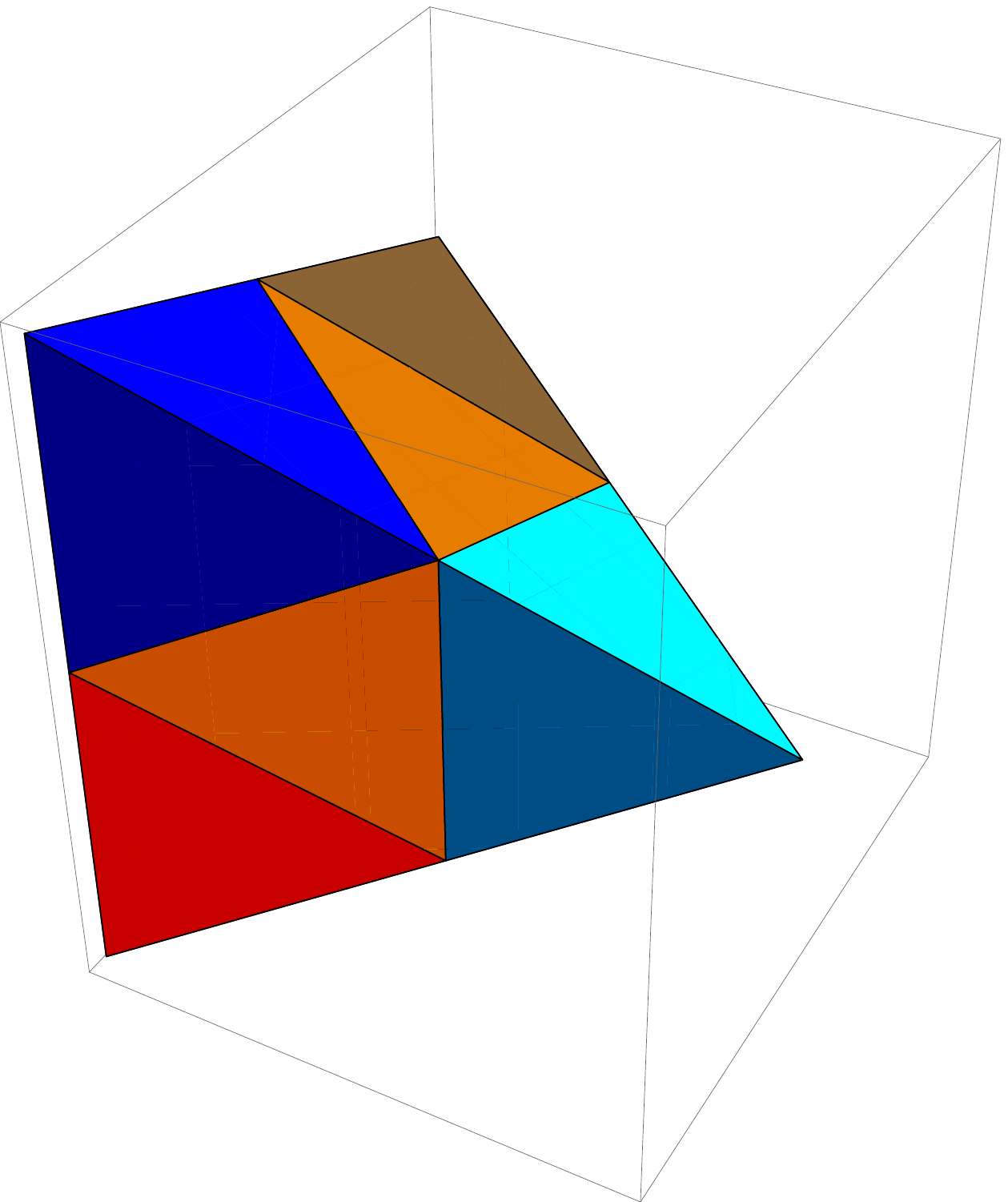} \\
\caption{ Eight tetrahedrons. }
\label{Fig1-5p}
\end{center}
\end{minipage}
\begin{minipage}{0.5\hsize}
\begin{center}
\includegraphics[scale=0.45]{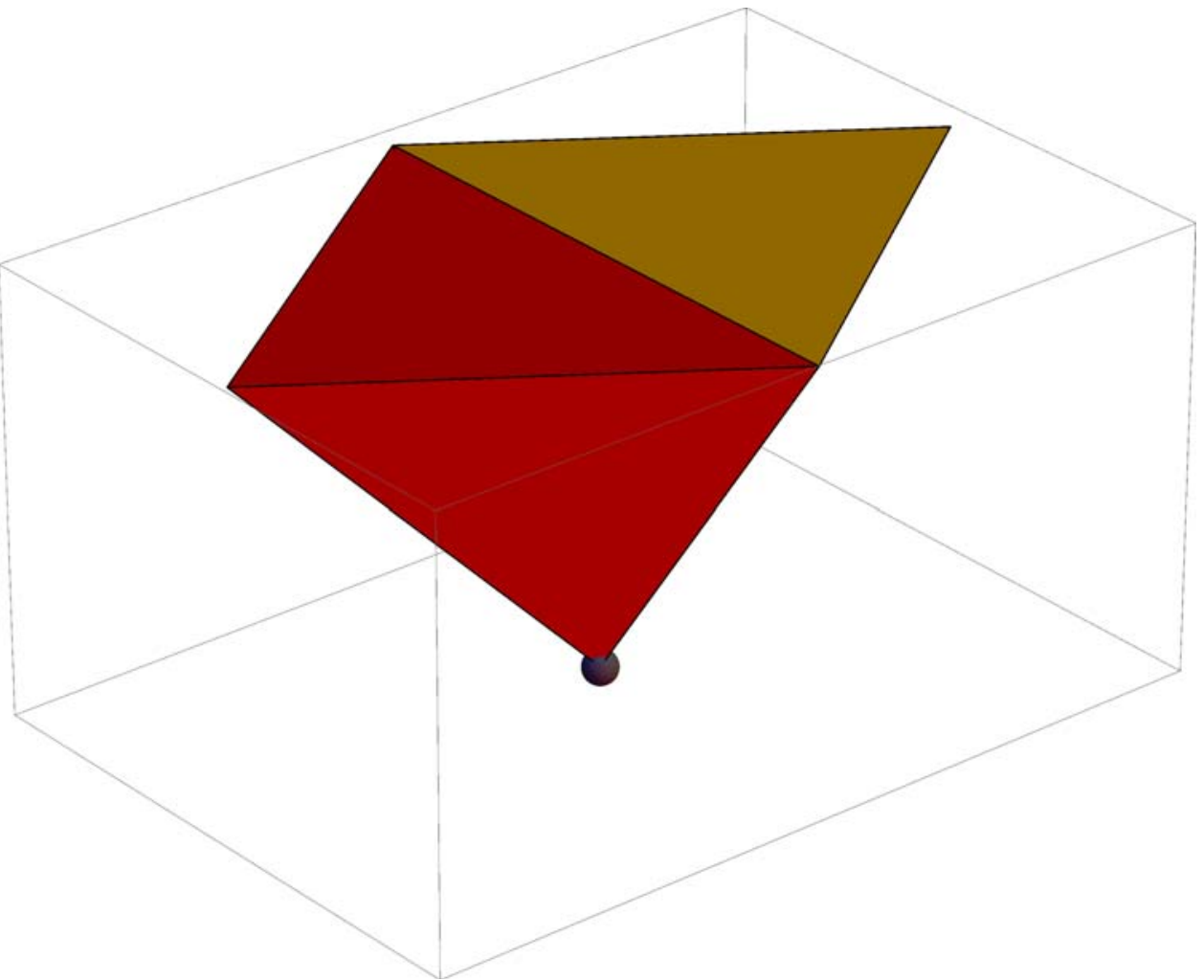}\hspace{1.5cm} \\
\caption{ A triangular prism. }
\label{Fig1-6p}
\end{center}
\end{minipage} 
\end{tabular}
\end{figure}
 
  The triangular prism plays the same role as the equilateral triangle plays in Proposition 2.2 in \cite{U}.   
Then the image of the fundamental region \(R\) under  \(\rho^n\) consists of  \(d^{3n}\) regions each of which is  congruent to \(R\).  Each region is mapped to \(R\) by some sequence of reflections in \(\mathcal{G}\).  

Conversely we consider the subdrivision  of \(R\).
We can divide the fundamental region \(R\) into \(d^{3n}\)  regions  each \(D_n\)  of which is congruent to a  region bounded by a smaller \((\sqrt 3, \sqrt3, 2)\)-tetrahedron.  Combining \(\rho^n\) and the sequence of reflections we have a continuous map from  \(D_n\) onto \(R\).  Then by the fixed point theorem,  we can prove the following lemma.
 \begin{lemma} \label{lemma:E}
Each region \(D_n\) has a periodic point  of period \(n\) of \(\rho\). 
\end{lemma}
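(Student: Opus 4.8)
The plan is to turn the assertion into a fixed point statement for the genuine dynamical self-map $\rho$ of $R$ and to solve it by an inverse–branch (contraction) argument. Recall that $R$ is a solid $(\sqrt3,\sqrt3,2)$-tetrahedron, hence a compact convex subset of $\mathbb{R}^3$, and that each subdivision piece $D_n$ is a closed subregion of $R$, so $D_n \subseteq R$. The map $\rho$ is, in the $(s_1,s_2,s_3)$-coordinates, multiplication by $d$ followed by the fold that returns the image to $R$ via an element of $\mathcal{G}$; by construction the pieces $D_n$ are exactly the $d^{3n}$ regions on each of which $\rho^n$ is one-to-one and carries $D_n$ onto all of $R$.

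First I would make the inverse branch precise. On $\mathrm{int}(D_n)$ the map $\rho^n$ is a composition of the linear dilation $x \mapsto d^n x$ with isometries from $\mathcal{G}$, so it is a diffeomorphism onto $\mathrm{int}(R)$ that expands the Euclidean metric by the factor $d^n$. Its inverse therefore defines a map $\psi_n \colon \mathrm{int}(R) \to \mathrm{int}(D_n)$ that is Lipschitz with constant $d^{-n}$; since $d \ge 2$ this constant is $<1$. Being uniformly continuous, $\psi_n$ extends to a $d^{-n}$-Lipschitz map $\psi_n \colon R \to D_n$, and by construction $\rho^n \circ \psi_n = \mathrm{id}_R$.

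Next I would invoke a fixed point theorem. Because $D_n \subseteq R$, the contraction $\psi_n$ is a self-map of the compact (hence complete) metric space $R$, so the Banach contraction principle yields a point $s^{\ast} \in R$ with $\psi_n(s^{\ast}) = s^{\ast}$; as $\psi_n(R) \subseteq D_n$ we have $s^{\ast} \in D_n$. Applying $\rho^n$ and using $\rho^n \circ \psi_n = \mathrm{id}_R$ gives $\rho^n(s^{\ast}) = s^{\ast}$, so $s^{\ast} \in D_n$ is a periodic point of $\rho$ of period $n$, as required. (Alternatively one may apply the Brouwer fixed point theorem to $\psi_n$, using that $R$ is homeomorphic to the closed ball $\overline{\mathbb{D}}^3$; this is presumably the ``fixed point theorem'' alluded to before the statement.) Transporting $s^{\ast}$ back by $\varphi^{-1}$ produces a periodic point of $f$ inside $K(f)$, which by Lemma~\ref{lemma:D} is repelling whenever it lies in the interior.

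The step I expect to be the main obstacle is the bookkeeping that makes $\rho^n|_{D_n} \colon D_n \to R$ a genuine homeomorphism (equivalently, that $\psi_n$ is single-valued and continuous up to the boundary). The interior statement is immediate because $\rho$ is locally the dilation by $d$ and each reflection $J_k$ is an isometry, but one must check that the reflection sequences attached to $\rho^n$ on adjacent sub-tetrahedra agree along their shared faces, so that the folded map is well defined and continuous across the walls of the subdivision. This is exactly where the reflective space-filler property of the $(\sqrt3,\sqrt3,2)$-tetrahedron and the invariance recorded in Proposition~\ref{pro:B2} enter; once that compatibility is in hand, the contraction estimate and the fixed point step are routine, and the count of $d^{3n}$ pieces matches the $d^{3n}$ fixed points of $\rho^n$ demanded by the theorem of Briend and Duval.
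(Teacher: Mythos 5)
Your proposal is correct and follows essentially the same route as the paper: the paper likewise subdivides $R$ into $d^{3n}$ congruent pieces, observes that $\rho^n$ composed with the appropriate sequence of reflections in $\mathcal{G}$ is a continuous map from $D_n$ onto $R$, and concludes "by the fixed point theorem." Your inverse-branch contraction argument (Banach, or equivalently Brouwer on the ball $R$) is simply a careful spelling-out of that step, and your flagged compatibility issue across walls is exactly what Proposition~2.3 and the space-filler property are there to guarantee.
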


All the repelling periodic points are dense and equidistributed in  \(R\).  Hence  we can prove the following theorem.
\begin{theorem} \label{theorem:F}
(1) \( J_3(P_{A_3}^d) = K(P_{A_3}^d).\)\\
(2) The maximal entropy measure \(\mu\) of \(P_{A_3}^d(z_1,z_2,z_3)\)  is given by 
\[\mu = \frac 3{\pi^3} \frac 1 {\sqrt{d_3}}dp_1 dp_2 dq,\]　
\[\mbox{where} \quad d_3 = 256 -27(z_1^4+{\bar z}_1^4) + (z_1^2+{\bar z}_1^2)(144z_2-4z_2^3 + 18z_1{\bar z}_1z_2)\]
\[-80z_1{\bar z}_1z_2^2+z_1^2{\bar z}_1^2z_2^2-192z_1{\bar z}_1-4z_1^3{\bar z}_1^3-6z_1^2{\bar z}_1^2 -128z_2^2 + 16z_2^4,\]

with \(z_1 = p_1 + ip_2\)  and  \(z_2 = q.\)\\
(3)  The Lyapunov exponents of \(P_{A_3}^d\) with respect to the measure \(\mu\) are given by \(\lambda _1 = \lambda _2 = \lambda _3 = \log d\) .　\\
\end{theorem}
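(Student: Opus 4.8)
The plan for (1) is to trap $J_3(f)=\operatorname{supp}\mu$ between $K(f)$ from both sides. Since $\mu=(T_f)^3$ is an $f$-invariant probability measure, its support is a compact $f$-invariant set, so every orbit meeting it is bounded and hence $J_3(f)\subseteq K(f)$. For the reverse inclusion I would use the Briend--Duval statement recalled above: the normalized counting measures $\mu_n$ on the repelling periodic sets $P_n$ converge weakly to $\mu$, so $\operatorname{supp}\mu$ contains the closure of $\bigcup_n P_n$. By Lemma~\ref{lemma:D} every periodic point in $\operatorname{int}(K(f))$ is repelling, and transporting through the conjugacy $\varphi$ to $\rho=d\cdot\mathrm{Id}$ on $R$, Lemma~\ref{lemma:E} furnishes a periodic point of period $n$ in each of the $d^{3n}$ subtetrahedra $D_n$. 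As $n\to\infty$ these become dense in $R$, hence dense in $K(f)$ via $\varphi^{-1}$, so $K(f)\subseteq\operatorname{supp}\mu=J_3(f)$ and equality follows.

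For (2) I would obtain $\mu$ by transporting the measure of maximal entropy through $\varphi$. On the fundamental region $R$ the conjugate map $\rho(s_1,s_2,s_3)=d(s_1,s_2,s_3)$, folded back by the reflection group $\mathcal{G}$, is the exact higher-dimensional analogue of a one-variable Chebyshev map, so its maximal measure is normalized Lebesgue measure $ds_1\,ds_2\,ds_3$; under the linear map $T$ of (2.6) this is a constant multiple of $d\alpha\,d\beta\,d\gamma$ on $R'$. Pushing forward by the parametrization $\varphi_1$ of (2.7) gives $\mu=c\,\lvert\det D\varphi_1\rvert^{-1}\,dp_1\,dp_2\,dq$, and the heart of the matter is to identify $\lvert\det D\varphi_1\rvert$. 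Writing $t_j=e^{i\theta_j}$ with $(\theta_1,\theta_2,\theta_3,\theta_4)=(\alpha,\beta,\gamma,-\alpha-\beta-\gamma)$, the coordinates $(z_1,z_2,\bar z_1)$ are the elementary symmetric functions of the $t_j$, whose Jacobian is governed by the Vandermonde factor $\prod_{i<j}(t_i-t_j)$. Consequently $\lvert\det D\varphi_1\rvert$ equals a constant times $\sqrt{d_3}$, where $d_3$ is the discriminant of the self-inversive quartic $T^4-z_1T^3+z_2T^2-\bar z_1T+1$ having the $t_j$ as roots; expanding this discriminant in the real invariants $z_1\bar z_1$, $z_1^2+\bar z_1^2$, $z_2$ produces the displayed polynomial, and fixing $c$ so that $\mu(K(f))=1$ yields the normalization $3/\pi^3$.

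Statement (3) I would read off from the same conjugacy. Because $\mu$ gives full measure to $\operatorname{int}(K(f))$, on which $\varphi$ is a diffeomorphism and $\rho=d\cdot\mathrm{Id}$, the chain rule gives $Df^n=(D\varphi^{-1})(d^nI)(D\varphi)$ at $\mu$-almost every point; since $D\varphi$ and $D\varphi^{-1}$ stay bounded along $\mu$-almost every orbit, $\lim_n\frac1n\log\lVert Df^n v\rVert=\log d$ in every direction, so all three Lyapunov exponents equal $\log d$. The main obstacle is the Jacobian computation inside (2): proving the precise identity $\lvert\det D\varphi_1\rvert=\mathrm{const}\cdot\sqrt{d_3}$ and carrying out the volume normalization over $R'$ that turns the constant into $3/\pi^3$. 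The conceptual input—that the symmetric-function map has Jacobian proportional to the Vandermonde product, whose square is the discriminant—is transparent, but the explicit expansion in the real coordinates $(p_1,p_2,q)$ is where the genuine calculation lies.
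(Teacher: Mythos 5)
Your proposal follows essentially the same route as the paper: part (1) is Briend--Duval combined with Lemmas 2.5 and 2.6, part (2) transports normalized Lebesgue measure on $R$ back through $T$ and $\varphi_1$ using the Jacobian identity $\bigl(\det \partial(p_1,p_2,q)/\partial(\alpha,\beta,\gamma)\bigr)^2 = d_3$ (your Vandermonde/discriminant-of-the-self-inversive-quartic explanation is precisely the content the paper imports from Lemma 3 of [EL]), and part (3) reads the exponents off the linear conjugate $\rho = d\cdot\mathrm{Id}$. Two small repairs to your wording, neither of which changes the route: in (1), weak convergence alone does not give $\operatorname{supp}\mu \supseteq \overline{\bigcup_n P_n}$ (supports can shrink in the limit), so one should argue via the equidistribution from Lemma 2.6 together with the portmanteau inequality on closed sets; and in (3), $D\varphi$ and $D\varphi^{-1}$ are \emph{not} bounded along $\mu$-a.e.\ orbit (the Jacobian degenerates at $\partial K(f)$, which dense orbits approach), so boundedness should be replaced by $\mu$-integrability of $\log\lVert D\varphi^{\pm 1}\rVert$, which yields $\tfrac1n \log\lVert D\varphi(f^n x)\rVert \to 0$ almost everywhere and hence the same conclusion.
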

\begin{proof}\quad (1):
From the Briend and Duval's theorem,  Lemmas \ref{lemma:D} and  \ref{lemma:E} , we have \(J_3(P_{A_3}^d) = K(P_{A_3}^d).\)

(2): By pulling back the Lebesgue measure on \(R\)  we will obtain the invariant measure  \(\mu\).  Set \(\tilde{\mu}_n : = \varphi_*\mu_n.\)  From Lemma \ref{lemma:E}  we deduce that the sequence \(\{\tilde{\mu}_n\}\)  converges weakly to \(\tilde{\mu} =  \frac {3\sqrt 2}{\pi^3}  ds_1\wedge ds_2\wedge ds_3.\)
\[\mbox{Hence} \quad \mu = \frac {3\sqrt 2}{\pi^3} \varphi^* ds_1\wedge ds_2\wedge ds_3.\]
\[\mbox{From (2.6), we have} \quad T^* ds_1\wedge ds_2\wedge ds_3 = \frac 1{\sqrt 2}  d \alpha \wedge d\beta \wedge d \gamma . \qquad \qquad \qquad\]
\[\mbox{Using Lemma 3 in \cite{EL},  we can compute Jacobian determinant } \quad
\det\frac{\partial(p_1, p_2, q)}{\partial(\alpha, \beta, \gamma)} .\]
\[\mbox{Then } \quad
\left(\det\frac{\partial(p_1, p_2, q)}{\partial(\alpha, \beta, \gamma)}\right)^2 =  d_3 ,\qquad \qquad\]
\[\mbox{where} \quad d_3 = 256 -27(z_1^4+{\bar z}_1^4) + (z_1^2+{\bar z}_1^2)(144z_2-4z_2^3 + 18z_1{\bar z}_1z_2)\]
\[-80z_1{\bar z}_1z_2^2+z_1^2{\bar z}_1^2z_2^2-192z_1{\bar z}_1-4z_1^3{\bar z}_1^3-6z_1^2{\bar z}_1^2 -128z_2^2 + 16z_2^4,\]

with \(z_1 = p_1 + ip_2\)  and  \(z_2 = q.\)\\
(Note that the formula in p.98 of \cite{EL}  corresponding to the above formula for \(d_3\)  is incorrect.)
\[\mbox{Hence} \quad  (\varphi_1^{-1})^* d \alpha \wedge d\beta \wedge d \gamma = \frac 1{\sqrt{ d_3}}dp_1\wedge dp_2\wedge dq   .\]
Since  \(\varphi^{*} =  (\varphi_1^{-1})^*T^*\),  the assertion (2) follows.  

(3): The assertion (3) follows from the fact that 
 \(\rho(s_1, s_2, s_3) = d(s_1, s_2, s_3).\) 
\end{proof}

\section{Julia set \(J_{\Pi}\) and stable sets}

In this section we continue to study Julia sets.  Set 
\(f : =P_{A_3}^d(z_1,z_2,z_3)\). From  Proposition \ref{pro:A}  we know \(f\) is a regular polynomial endomorphism.  So \(f\) extends continuously and holomorphically to \({\mathbb P^3} \), still denoted by \(f\).   We will study the Julia sets \(J_2(f)\),  \(J_1(f)\) and  \(J_2(f_{\Pi})\),  where      \(f_{\Pi}\)  denotes the restriction of \(f\)  to the hyperplane \(\Pi\) at infinity.  Note that  \(\Pi\) is completely invariant under \(f\).

 The B\(\ddot{o}\)ttcher coordinate is useful in holomorphic dynamics in one complex variable.  We try to construct analogous maps to the  B\(\ddot{o}\)ttcher coordinate.\\  
Let \(f_h\) denote the homogeneous part of degree \(d\) of  \(f(z_1,z_2,z_3)\). 
\[\mbox{Set} \quad \Phi_2(x, y, z) = (x^2, x(y+\frac 1y)/z, 1/z^2).\]
\begin{pro}\label{pro:I}
\(f\) and \(f_h\) satisfy the following commutative diagram.
\begin{equation}
\begin {array}{cccc}
  (z_1,z_2,z_3) & \xrightarrow{f} &  (z_1^{(d)}, z_2^{(d)}, z_3^{(d)}) & \\
    \uparrow{\Phi _{1} \enskip} &  &  \uparrow{\Phi _{1} \enskip} \\
   (t_1, t_2, t_3) &  \xrightarrow{ }& (t_1^d, t_2^d, t_3^d)   & \\
      \uparrow{} &  & \uparrow{} \\
   (\sqrt{t_1}, \sqrt{t_2}, \sqrt{t_3}) &  \xrightarrow{  }& (\sqrt{t_1}^d, \sqrt{t_2}^d, \sqrt{t_3}^d)   \\
   \downarrow{\Phi _{2} \enskip} &  & \downarrow{\Phi _{2} \enskip}  & \\
   (t_1, \frac{\sqrt{t_1}}{\sqrt{t_3}}(\sqrt{t_2}+\frac 1{\sqrt{t_2}}),\frac 1{t_3}) &  \xrightarrow{f_h }& (t_1^d, (\frac{\sqrt{t_1}}{\sqrt{t_3}})^d(\sqrt{t_2}^d +\frac 1{\sqrt{t_2}^d}), \frac 1{t_3^d}) &
\end{array}
\end{equation}
where  \(t_j  \in \mathbb{C}\setminus \{0\}\)  and  \( \sqrt{t_1}, \sqrt{t_2}, \sqrt{t_3}\) are arbitrary branches and
 
\begin{equation}
\begin{split}
z_1^{(d)} = t_1^d + t_2^d + t_3^d + \frac {1}{t_1^dt_2^dt_3^d}, \qquad \\
z_2^{(d)} = t_1^dt_2^d + t_1^d t_3^d + t_2^d t_3^d + \frac 1{t_1^dt_2^d} + \frac 1{t_1^dt_3^d}+ \frac 1{t_2^d t_3^d},\\
z_3^{(d)}= \frac 1{t_1^d} + \frac 1{t_2^d} + \frac1{t_3^d} + t_1^dt_2^d t_3^d.\qquad \qquad
\end{split} 
\end{equation}
\end{pro}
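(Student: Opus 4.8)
The plan is to split the tall diagram (3.1) into three stacked squares and dispatch them one at a time, with essentially all the content concentrated in the bottom square. The top square (the first two rows, joined by the upward arrows \(\Phi_1\)) is nothing but the defining diagram (1.2): by (3.2) the top--right vertex \((z_1^{(d)}, z_2^{(d)}, z_3^{(d)})\) is exactly \(\Phi_1(t_1^d, t_2^d, t_3^d)\), so commutativity there is precisely the semiconjugacy \(\Phi_1 \circ (\text{$d$-th power}) = f \circ \Phi_1\) already recorded in (1.2). The middle square (rows two and three, joined by the coordinatewise squaring maps) commutes trivially, since \(\bigl((\sqrt{t_j})^d\bigr)^2 = t_j^d\) for each \(j\); squaring intertwines the two power maps.

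Next I would check that the two downward \(\Phi_2\)-arrows actually land on the vertices written in the bottom row. With \(\Phi_2(x,y,z) = (x^2,\, x(y + 1/y)/z,\, 1/z^2)\), substituting \((x,y,z) = (\sqrt{t_1}, \sqrt{t_2}, \sqrt{t_3})\) produces the bottom--left vertex and substituting \((\sqrt{t_1}^d, \sqrt{t_2}^d, \sqrt{t_3}^d)\) produces the bottom--right vertex; both are immediate from the definition of \(\Phi_2\). It then remains only to establish the bottom square, namely that \(f_h\) carries the bottom--left vertex to the bottom--right vertex.

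For the bottom square, recall from the proof of Proposition \ref{pro:A} that \(f_h = (z_1^d,\, h_2^{(d)},\, z_3^d)\). On the bottom--left vertex one has \(z_1 = t_1\) and \(z_3 = 1/t_3\), so the first and third coordinates of \(f_h\) are \(t_1^d\) and \(1/t_3^d\), matching the bottom--right vertex at once. The entire weight of the proposition therefore rests on the middle coordinate, where I must show
\[h_2^{(d)}\!\left(t_1,\ \tfrac{\sqrt{t_1}}{\sqrt{t_3}}(\sqrt{t_2} + \tfrac{1}{\sqrt{t_2}}),\ \tfrac 1{t_3}\right) = \left(\tfrac{\sqrt{t_1}}{\sqrt{t_3}}\right)^{d}\!\left(\sqrt{t_2}^{\,d} + \tfrac{1}{\sqrt{t_2}^{\,d}}\right).\]
This is where I would solve the recurrence (2.4) in closed form. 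Setting \(w = \sqrt{t_1}/\sqrt{t_3}\) and \(u = \sqrt{t_2}\), the substituted data read \(z_2 = w(u + u^{-1})\) and \(z_1 z_3 = t_1/t_3 = w^2\), so (2.4) becomes \(h_2^{(d+2)} = w(u+u^{-1})\,h_2^{(d+1)} - w^2\,h_2^{(d)}\). I claim \(h_2^{(d)} = w^d(u^d + u^{-d})\), which I verify by induction: it gives \(h_2^{(1)} = w(u+u^{-1}) = z_2\) and \(h_2^{(2)} = w^2(u^2 + u^{-2}) = z_2^2 - 2z_1 z_3\), matching the initial data of (2.4), while the inductive step reduces to the elementary identity \((u+u^{-1})(u^{d+1} + u^{-d-1}) - (u^d + u^{-d}) = u^{d+2} + u^{-d-2}\). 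Since \(w^d(u^d + u^{-d})\) is exactly the asserted middle coordinate, the bottom square commutes and the whole diagram follows.

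The only real obstacle is recognizing and verifying the closed form of \(h_2^{(d)}\); once the Chebyshev/Dickson-type substitution \(z_2 = w(u+u^{-1})\), \(z_1 z_3 = w^2\) is made, the recurrence (2.4) linearizes and the induction is routine. Everything else is bookkeeping of the two power maps and direct substitution into \(\Phi_2\).
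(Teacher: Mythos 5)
Your proof is correct and follows essentially the same route as the paper: both reduce the bottom square to the recurrence (2.4) pushed through the substitution \(\Phi_2\) and then induct on \(d\). The only difference is presentational --- you make explicit the Chebyshev-type closed form \(h_2^{(d)}\circ\Phi_2 = w^d(u^d+u^{-d})\) (with base cases \(d=1,2\)) that the paper's induction (with base cases \(d=2,3\)) establishes implicitly.
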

\begin{proof}
The upper-half of the commutative diagram is shown in (1.2).  We prove the lower-half of the diagram by induction on \(d\).  If  \(d = 2\) or \(3\), we can directly prove that the diagrams is commutative. The function \(f_h\) is considered in the proof of Proposition \ref{pro:A}.
\[f_h(x, y, z) = (x^d, h_2^{(d)}(x, y, z) , z^d).\]  
\[\mbox{Set} \quad \Phi_2(\sqrt{t_1}, \sqrt{t_2}, \sqrt{t_3}) = (x, y, z).\]  
\[\mbox{Then} \quad h_2^{(d+2)} \circ \Phi_2 = yh_2^{(d+1)} \circ \Phi_2 - xzh_2^{(d)} \circ \Phi_2.\] 
 Hence the diagram is commutative for any \(d\).
\end{proof}

 We use the definitions and notations in \cite{BJ}.
Let \(\Pi : = {\mathbb P^3} - {\mathbb C^3}\),  the plane at infinity.  It is isomorphic to \({\mathbb P^2}\).   
Clearly,  \(\Pi\) is completely invariant.
Let \(f_{\Pi}\) denote the restriction of \(f\) to \(\Pi\).
We may define the current  \(T_{\Pi} : = T \vert _{\Pi}\)
as the slice current.
Set
\[\mu _{\Pi} : = T_{\Pi}^2 \quad \mbox{and} \quad J_2(f_{\Pi})  : = supp(\mu_{\Pi}).\]
Bedford and Jonsson \cite{BJ} use the symbol \(J_{\Pi}\) for \(J_2(f_{\Pi})\).  We have the following statements for \(J_{\Pi}\) and \(\mu_{\Pi}\).\\

\begin{theorem}  
(1) The Julia set  \(J_2(f_{\Pi}) \) is a M\(\ddot{o} \)bius strip \(\mathcal{M}\). 
\[ \mathcal{M}  = \{(e^{\theta i}, xe^{\frac \theta 2 i }) :  0 \leq \theta < 2\pi, \enskip -2 \leq x \leq 2\}.\]
 (2) The maximal entropy measure \(\mu = \mu_{\Pi}\) is given by 　
 \[\sigma_*(\mu ) = \frac{d\theta }{2\pi } \quad \mbox{on} \quad \{e^{i\theta } : 0 \leq \theta < 2\pi \}\enskip \mbox{in the} \enskip \xi -\mbox{plane} ,\]
 \[\mu (\cdot \mid \sigma ^{-1}(\xi )) = \frac 1\pi \frac{dx}{\sqrt{4-x^2}}\quad \mbox{on} \quad \{xe^{\frac \theta  2 i}:  -2\leq x \leq 2\}.\]
 Here \({f}_{\Pi} (z_1:z_2:z_3) = f_{\Pi}(\xi :\eta :1), \quad \mbox{and} \quad \sigma (\xi ,\eta ) = \xi \) .\\
(3)  The Lyapunov exponents    of   \( f_{\Pi} \) with respect  to \(\mu\) are given by      
\(\lambda _1 = \lambda _2 =  \log d\).  
\end{theorem}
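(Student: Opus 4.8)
The plan is to exploit the skew-product structure of \(f_{\Pi}\) made explicit by the commutative diagram of Proposition \ref{pro:I}. Since \(f_{\Pi}\) is the projectivization of \(f_h = (z_1^d, h_2^{(d)}, z_3^d)\), I would first pass to the affine chart \((\xi, \eta) = (z_1/z_3, z_2/z_3)\) on \(\Pi \cong {\mathbb P}^2\), in which \(f_{\Pi}\) takes the skew form \(f_{\Pi}(\xi, \eta) = (\xi^d,\, h_2^{(d)}(\xi, \eta, 1))\). Thus the base dynamics is the power map \(\xi \mapsto \xi^d\) and the projection \(\sigma(\xi, \eta) = \xi\) semiconjugates \(f_{\Pi}\) to it. Next, using the recurrence (2.4) for \(h_2^{(d)}\) (equivalently the lower half of the diagram in Proposition \ref{pro:I}), I would introduce the twisted fiber coordinate \(x = \eta/\sqrt{\xi}\): writing \(\eta = \sqrt{\xi}\,(s + 1/s)\), the recurrence gives \(h_2^{(d)}(\xi, \eta, 1) = \xi^{d/2}(s^d + 1/s^d)\), so in the coordinate \(x\) the fiber dynamics is exactly the one-variable Chebyshev map \(x \mapsto T_d(x)\), where \(T_d(s + 1/s) = s^d + 1/s^d\).

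Then I would invoke the theory of polynomial skew products developed by Jonsson and by Bedford and Jonsson \cite{BJ}. For a skew product of this type the measure of maximal entropy \(\mu_{\Pi} = T_{\Pi}^2\) disintegrates as a fibered measure over \(\sigma\): the image \(\sigma_*\mu_{\Pi}\) is the maximal measure of the base map \(\xi \mapsto \xi^d\), namely \(d\theta/2\pi\) on the unit circle \(\{|\xi| = 1\}\), while each conditional measure \(\mu_{\Pi}(\cdot \mid \sigma^{-1}(\xi))\) is the maximal measure of the fiber Chebyshev map \(T_d\), namely the equilibrium measure \(\frac 1\pi\, dx/\sqrt{4 - x^2}\) on the fiber segment \(\{x e^{\theta i/2} : -2 \le x \le 2\}\). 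This is precisely assertion (2).

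Taking supports gives \(J_2(f_{\Pi}) = \mathrm{supp}(\mu_{\Pi}) = \{(e^{\theta i},\, x e^{\theta i/2}) : 0 \le \theta < 2\pi,\ -2 \le x \le 2\} = \mathcal{M}\). To see that \(\mathcal{M}\) is a M\(\ddot{o}\)bius strip I would track the monodromy of the branch \(\sqrt{\xi} = e^{\theta i/2}\): as \(\theta\) runs from \(0\) to \(2\pi\) the base point returns to \(\xi = 1\) while \(e^{\theta i/2}\) changes sign, so the fiber segment is glued to itself by the orientation-reversing map \(x \mapsto -x\); this half-twist gluing is exactly a M\(\ddot{o}\)bius band, which is assertion (1). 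For assertion (3), the two Lyapunov exponents of the skew product split into a base contribution and a fiber contribution: the base map \(\xi \mapsto \xi^d\) has exponent \(\log d\) with respect to \(d\theta/2\pi\), and the Chebyshev map \(T_d\) has exponent \(\log d\) with respect to its equilibrium measure, since all its critical points lie in the Julia set \([-2, 2]\) and the Green-function correction vanishes. Hence \(\lambda_1 = \lambda_2 = \log d\).

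The step I expect to be the main obstacle is making the skew-product disintegration rigorous on the projective plane \(\Pi\) rather than merely on an affine chart. One must confirm that \(\mathrm{supp}(\mu_{\Pi})\) stays in the bounded region \(\{|\xi| = 1,\ |x| \le 2\}\), away both from the line at infinity of \(\Pi\) and from the degenerate fibers over \(\xi = 0, \infty\), and that the twisted coordinate \(x = \eta/\sqrt{\xi}\), which is globally defined only up to sign, does not spoil the fibered formula for \(\mu_{\Pi}\). Controlling this \({\mathbb Z}/2\) monodromy is simultaneously what forces the M\(\ddot{o}\)bius (rather than cylindrical) topology, so the geometric heart of the statement and its principal technical difficulty coincide.
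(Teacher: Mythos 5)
Your proposal follows essentially the same route as the paper: the paper likewise passes to the affine chart of \(\Pi\), views \(f_{\Pi}\) as the polynomial skew product \((z,w) \mapsto (z^d,\, h_2^{(d)}(z,w,1))\), conjugates the fiber dynamics to the one-variable Chebyshev map via \(w = \sqrt{t_1}\,(\sqrt{t_2} + 1/\sqrt{t_2})\), and then invokes Jonsson's results (Corollary 4.4 for the support, Theorem 4.2 for the fibered disintegration of \(\mu_{\Pi}\), Theorem 6.5 for the exponents). The one step you flag but do not carry out — that \(J_2(f_{\Pi})\) avoids the line \(\{v = 0\}\) at infinity of \(\Pi\) — is dispatched in the paper by observing that this line is an attracting set for \(f_{\Pi}\), hence has a neighborhood containing no repelling periodic points.
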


To prove this theorem we use Jonsson's results in \cite{J}.  In \cite{J}, Jonsson study polynomial skew product maps on ${\mathbb C}^2$.  A polynomial skew product of ${\mathbb C}^2$ of degree \(d \ge 2\) is a map of the form \(f(z,w) = (p(z), q(z,w))\),  where \(p\) and \(q\) are polynomials of degree \(d\).  Let \(G_p(z)\) be the Green function of \(p\) and \(G(z, w)\) be the Green function of \(f\) on ${\mathbb C}^2$.  
\[\mbox {Set} \quad K_p : = \{G_p = 0\} \quad \mbox {and} \quad J_p : = \partial K_p.\]
Define \(G_z(w) : = G(z,w) - G_p(z)\).
\[\mbox {Let} \quad K_z : = \{G_z = 0\} \quad \mbox {and} \quad J_z : = \partial K_z.\]
{\bf Proof of Theorem 3.2} 

(1):  Let \(\pi\) be the projection from ${\mathbb C}^3 - \{0\} $  to \(\Pi\).  Then \(\pi \circ f_h = f_{\Pi} \circ \pi\). 
\[\mbox {Since} \quad f_h(z,w,v) = (z^d, h_2^{(d)}(z,w,v), v^d),  \quad \mbox{it follows that}\]
\[ f_{\Pi}(z:w:v) = (z^d: h_2^{(d)}(z,w,v): v^d).\]
Case 1 : \(v = 0\).  The line \(\{v = 0\}\)  at infinity in \(\Pi\)  is an attracting set of \(f_{\Pi}(z:w:v)\).  Hence   
 there is a neighborhood of \(\{v = 0\}\) which does not have any repelling periodic point of \(f_{\Pi}\).   Therefore 
\[ \{v = 0\} \cap J_2(f_{\Pi}) =  
\emptyset.\]
\[\mbox {Case 2} :  v \ne 0.\quad \mbox{Then} \quad f_{\Pi}(z:w:1) = (z^d : h_2^{(d)}(z,w,1) : 1) \qquad\qquad\qquad\qquad\qquad\qquad\]
and so we consider a polynomial skew product on ${\mathbb C}^2$, still denoted by \(f_{\Pi}\),
\[f_{\Pi}(z,w) = (z^d, h_2^{(d)}(z,w,1) ).\]
Set \(z = t_1\)  and \(w = \sqrt {t_1}( \sqrt {t_2} + \frac 1{ \sqrt {t_2}}).\)  Then from (3.1) we see that
\begin{equation}
 f_{\Pi}(t_1, \sqrt {t_1}( \sqrt {t_2} + \frac 1{ \sqrt {t_2}})) = (t_1^d,  \sqrt {t_1}^d( \sqrt {t_2}^d + \frac 1{ \sqrt {t_2}^d})).
\end{equation}
We use Jonsson's results.  In our case \(p(z) = z^d\)  and so \(J_p = \{\mid z \mid = 1\}.\)  Hence, we may assume \enskip \(z = t_1 \ne 0\) .   To use Corollary 4.4 in \cite{J}, we consider \(K_a\) for any \(a = e^{i\theta} \in J_p\).  Let  \(t_1 = e^{i\theta}\).  Since \(G_p(a) = 0,\)  we have \(G_a(w) = G(a, w)\), where 
\[G(a,w) = \lim_{n\to \infty}d^{-n}\log^+\mid f_{\Pi}^n(a, w) \mid.\]
From (3.3) and the definition of \(K_a\), we see that \(w \in K_a\)  if and only if  \(w = e^{i\theta/2}(e^{i\phi} + e^{-i\phi})\) with  \(0 \le \phi \le 2\pi\).
\[\mbox {Hence} \quad K_a = \{2\cos \phi e^{\frac{i\theta}2} : 0 \le \phi \le 2\pi\}. \quad \mbox {Therefore} \]
\[ J_a = \partial K_a = K_a.\]
By Corollary 4.4 in \cite{J},  we conclude that 
\[J_2(f_{\Pi}) = \overline{\cup_{a \in J_p}\{a\}\times J_a} = \{ (e^{i\theta}, 2\cos\phi  e^{i\theta/2}) : 0 \le \theta \le 2\pi, 0 \le \phi \le \pi\}.\]

(2):  To prove the assertion (2), we use Theorem 4.2 in\cite{J}.  The action of \(\mu\) on a test function \(\varphi\) is given by
\[\int \varphi \mu = \int (\int \varphi(z,w)\mu_z(w))\mu_p(z).\]
\[\mbox{Here} \quad \mu_p : = \frac 1{2\pi}dd^cG_p \quad \mbox{and} \quad  \mu_z : = \frac 1{2\pi}dd^cG_z .\]
Since \(p(z) = z^d\),  it follows that \(\mu_p = \frac 1{2\pi}d\theta\)  and supp\((\mu_p)\) is the unit circle \(S^1\).  We will compute 
\[G_z(w) : = G(z,w) - G_p(z) \quad \mbox{and} \quad \mu_z \quad \mbox{for} \quad z \in S^1.\]
Let \(a = e^{i\theta}\).

As before we set \(z = t_1 = a\)  and  \(w = \sqrt {t_1}( \sqrt {t_2} + \frac 1{ \sqrt {t_2}}).\)    From  (3.3),  we have
\[\mid f_{\Pi}^n(a, w)\mid^2 = \mid a^{d^n}\mid^2 + \mid( \sqrt {a})^{d^n}( \sqrt {t_2}^{d^n} + \frac 1{ \sqrt {t_2}^{d^n}})\mid ^2  = 1 + \mid \sqrt {t_2}^{d^n} + \frac 1{ \sqrt {t_2}^{d^n}})\mid^2 .\] 
Hence
\begin{equation*}
\begin{split}
 G(a,w) = \lim_{n\to \infty}\frac 1{2d^n} \log(1+ \mid \sqrt {t_2}^{d^n} + \frac 1{ \sqrt {t_2}^{d^n}})\mid^2)\\
=\lim_{n\to \infty}\frac 1{2d^n} \log^+ \mid \sqrt {t_2}^{d^n} + \frac 1{ \sqrt {t_2}^{d^n}}\mid^2\quad\\
= \lim_{n\to \infty}\frac 1{d^n} \log^+ \mid \sqrt {t_2}^{d^n} + \frac 1{ \sqrt {t_2}^{d^n}}\mid\qquad\quad\\
= \lim_{n\to \infty}\frac 1{d^n} \log^+ \mid T_d^n(u) \mid  \qquad\qquad\\
= G_T(u) \qquad\qquad\qquad\qquad\qquad\qquad
\end{split}
\end{equation*}
Here \(T_d(u)\) is the Chebyshev polynomial of degree \(d\) of a single variable   \(u = ( \sqrt {t_2} + \frac 1{ \sqrt {t_2}})\)  and \(G_T(u)\) is the Green function of   \(T_d(u)\).

Since \(w = e^{\frac{i\theta}2} u\) and \(G_T(u) = G(a,w) = G_a(w)\),  we have 
\[\frac{\partial^2}{\partial u\partial\bar{u}} G_T(u) = e^{-\frac {i\theta}2} \cdot e^{\frac{i\theta}2} \frac{\partial^2}{\partial w\partial\bar{w}}G(e^{i\theta}, w) = \frac{\partial^2}{\partial w\partial\bar{w}}G_a( w).\]
It is known in \cite{UN}  that the maximal entropy measure \((1/2\pi)dd^cG_T(u)\)  of \(T_d(u)\) is equal to \(\frac 1{\pi}\frac{du_1}{\sqrt{4-u_1}}\)  supported on the segment \(\{u_1 : -2 \le u_1 \le 2\},\)  where \( u_1 =Re(u) .\)\\
Hence the current \(\mu_a\) is given by  
\[ \frac 1\pi \frac{dx}{\sqrt{4-x^2}}\quad \mbox{on} \quad \{xe^{\frac \theta  2 i}:  -2\leq x \leq 2\}.\]

(3): We have proved that \(J_p\) is connected and  each \(J_a\)  is connected for all  \(a \in J_p\).  Hence from Theorem 6.5 in \cite{J}  we have
\[\lambda_1 = \lambda_2 = \log d.  \qquad \qquad \Box\]

We continue to study Julia sets.  We consider orbits of \(f\) and classify all the points of \({\mathbb C}^3\) into four categories.  We begin with finding invariant sets of \(f\) in \({\mathbb P}^3\).  We have already two invariant sets \(K(f)\) and \(J_2(f_{\Pi})\).  
Besides these sets, there are two circles :
\[ S_1 : = \{(1:e^{i\theta} : 0 : 0) : 0 \le \theta < 2\pi\} , \quad
S_2 : = \{(0:e^{i\theta} : 1 : 0) : 0 \le \theta < 2\pi\},\]
and three attracting fixed points  :
\[P_1 = (1 : 0 : 0 : 0), \quad P_2 = (0 : 1 : 0 : 0), \quad P_3 = (0 : 0 : 1 : 0).\]
We define the stable set of an invariant set \(X\)  by 
\[W^s(X,f) = \{x \in  {\mathbb P^3} : d(f^nx, X) \to 0 \quad \mbox{as}\quad  n \to \infty\}.\]
Then we have the following proposition.
\begin{pro}\label{pro:2}
Let \(a,b,c,d\) be a permutation of the set  \(\{\mid t_1\mid, \mid t_2\mid, \mid t_3 \mid, \mid t_4 \mid\}, \quad \mbox{where} \quad t_4  = \frac 1{t_1t_2t_3}\).\\
(1) If \(a = b = c = d =1,\)  then  \(\Phi_1(t_1, t_2, t_3) \in K(f)\).\\
 (2) If \(a > b = c  = 1 > d = \frac 1a,\)  then  \(\Phi_1(t_1, t_2, t_3) \in W^s(J_2(f_{\Pi}), f)\).\\
 (3) If \(a > b =  1 > c \ge d \)  or \(a \ge b > c = 1 > d  \),  then  \(\Phi_1(t_1, t_2, t_3) \in W^s(S_1 \cup S_2, f)\).\\
(4) If \((a-1)(b-1)(c-1)(d-1) \ne 0\), \enskip  then  \(\Phi_1(t_1, t_2, t_3) \in W^s(P_1\cup P_2\cup P_3, f)\).\\
\end{pro}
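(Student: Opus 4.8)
The plan is to reduce everything to the moduli of the $t_j$ via the semiconjugacy (1.2). That diagram gives $f^n(\Phi_1(t_1,t_2,t_3)) = \Phi_1(t_1^{d^n}, t_2^{d^n}, t_3^{d^n})$, so with $m := d^n \to \infty$ the entire forward orbit is controlled by the moduli $r_j := |t_j|$ ($j=1,2,3,4$, $t_4 = 1/(t_1t_2t_3)$, $\prod_j r_j = 1$). Since $z_1, z_2, z_3$ are the elementary symmetric functions $e_1, e_2, e_3$ of $t_1^m, \dots, t_4^m$, and $\Phi_1$ is symmetric in $t_1, \dots, t_4$, I may reorder so that $a_1 \ge a_2 \ge a_3 \ge a_4$ where $a_j := \log r_j$ and $\sum_j a_j = 0$. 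The first step is to record that the four homogeneous coordinates of $f^n(\Phi_1(t)) = [z_1^{(m)} : z_2^{(m)} : z_3^{(m)} : 1]$ in $\mathbb{P}^3$ have magnitude-exponents $0,\ a_1,\ a_1+a_2,\ a_1+a_2+a_3 = -a_4$, with leading monomials the largest $t$, the largest pair, and the largest triple $t_1 t_2 t_3 = 1/t_4$, respectively. Crucially each leading monomial is a \emph{single} product of powers of the $t_j$, so there is no internal cancellation.

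Statement (1) is Proposition~\ref{pro:B}. For statement (4) I observe that, no $r_j$ being $1$, the integer $k \in \{1,2,3\}$ of moduli exceeding $1$ is well defined, and the partial sums $0, a_1, a_1+a_2, a_1+a_2+a_3$ attain their maximum \emph{uniquely} at $a_1 + \dots + a_k$. Hence exactly one of $z_1, z_2, z_3$ dominates, every off-coordinate ratio has strictly negative exponent and tends to $0$, and the $w$-coordinate tends to $0$ as well; so $f^n(\Phi_1(t))$ converges to $[1:0:0]=P_1$, $[0:1:0]=P_2$, or $[0:0:1]=P_3$ according as $k = 1,2,3$, giving $\Phi_1(t) \in W^s(P_1 \cup P_2 \cup P_3, f)$.

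For statements (2) and (3) the maximum of the partial sums is attained by two adjacent coordinates (case (3), one $r_j = 1$) or by all three of $z_1, z_2, z_3$ (case (2), two $r_j = 1$); the competing leading coordinates are then comparable, and their ratios are unimodular phases $t_j^m$ with $|t_j| = 1$. Normalizing by the appropriate leading monomial, I would read off the accumulation set directly: in case (3a) ($a > b = 1 > c \ge d$) the orbit stays within $o(1)$ of $[1 : t_2^m : 0] \in S_1$, and in case (3b) ($a \ge b > c = 1 > d$) within $o(1)$ of $[0 : 1 : t_3^m] \in S_2$. In case (2) the orbit accumulates on the infinity point $[1 : t_2^m + t_3^m : t_2^m t_3^m]$; passing to the chart $z_3 = 1$ of $\Pi$ and writing $t_2 = e^{i\beta}$, $t_3 = e^{i\gamma}$, the identity $e^{-im\beta} + e^{-im\gamma} = 2\cos\!\big(\tfrac{m(\beta-\gamma)}{2}\big)\,e^{-im(\beta+\gamma)/2}$ rewrites the pair $(z_1, z_2)$ in the form $(e^{i\theta}, 2\cos\phi\,e^{i\theta/2})$, which is exactly the parametrization of the Möbius strip $\mathcal{M} = J_2(f_\Pi)$ in Theorem 3.2(1). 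Since the $w$-coordinate tends to $0$ throughout, these give $\Phi_1(t) \in W^s(S_1 \cup S_2, f)$ and $\Phi_1(t) \in W^s(J_2(f_\Pi), f)$, respectively.

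I expect the main obstacle to be the tie cases (2) and (3), where the leading coordinates oscillate rather than converge. There one must check two things carefully: first, that the subleading terms vanish uniformly in $n$, so that $d(f^n x, X) \to 0$ along the full sequence and not merely along subsequences; and second, that the oscillation takes place \emph{within} the invariant set — a circle for $S_1, S_2$, and the ruling-by-ruling structure of $\mathcal{M}$ for case (2). The trigonometric identity matching $t_2^m + t_3^m$ to the $2\cos\phi\,e^{i\theta/2}$ parametrization of $\mathcal{M}$ is the crux of the argument, and it is precisely here that the observation that each dominant coordinate is governed by a single monomial is needed, since it rules out a cancellation that could otherwise drop the orbit onto a smaller invariant set.
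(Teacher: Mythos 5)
Your proposal is correct and follows essentially the same route as the paper's proof: both compare the moduli of the monomials $t_j^{d^n}$ appearing in the elementary symmetric coordinates, identify the dominant coordinate(s) (the paper's $M(z_j)$ and $dom(z_j)$ bookkeeping is exactly your partial-sum criterion), normalize by the leading monomial, and in the tie cases use the identity $e^{-im\beta}+e^{-im\gamma}=2\cos\bigl(\tfrac{m(\beta-\gamma)}{2}\bigr)e^{-im(\beta+\gamma)/2}$ to land on $S_1$, $S_2$, or the M\"obius strip $\mathcal{M}$. The only caveat is your blanket ``no internal cancellation'' claim, which fails for subdominant or tied coordinates (e.g.\ $z_2\approx t_1^m(t_2^m+t_3^m)$ in case (2)); but since you only need it for the uniquely dominant coordinate and you treat the tied coordinates via the cosine identity, this is a wording issue rather than a gap.
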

\begin{proof}\quad (1):
The assertion (1) is already shown in Proposition 2.2.  

 (2):  Let \(r_j = \mid t_j \mid, \quad (j = 1, 2, 3, 4)\). \quad We assume that 
\[r_1 = r, \quad r_3 = \frac 1r, \quad r_2 = r_4 =1, \quad r > 1. \quad \mbox  {Then} \qquad \qquad \qquad\]
\[z_1 = re^{i\alpha  } + e^{i\beta }+\frac {e^{i\gamma}}r +  e^{i(-\alpha -\beta -\gamma)},\qquad \qquad \qquad \qquad \qquad\]
\[z_2 = re^{i(\alpha+\beta)  } + e^{i(\alpha+\gamma ) } +re^{i(-\gamma -\beta )}+ \frac 1r e^{i(\beta +\gamma)} + e^{i( -\alpha-\gamma )} + \frac 1r e^{-i(\alpha +\beta )},\]
\[z_3 =\frac 1re^{-i\alpha  } + e^{-i\beta } + re^{-i\gamma} + e^{i(\alpha +\beta +\gamma )}.\qquad \qquad \qquad \qquad\]
The dominant terms of \enskip \(z_1, z_2, z_3\) \enskip are \enskip \(re^{i\alpha}, \enskip re^{i(\alpha+\beta)} + re^{i(-\beta-\gamma)}, \enskip re^{-i\gamma}, \enskip\) respectively.  Then for large \(n\),
\[f^n(z_1 : z_2 : z_3 : 1) \simeq (\exp(i\alpha d^n) : \exp(i(\alpha+\beta) d^n) + \exp(-i(\beta+\gamma) d^n) : \exp(-i\gamma d^n) : \frac 1{r^{d^n}})\]
\[= (\exp(i(\alpha+\gamma) d^n) : \exp(i(\alpha+\gamma) \frac{d^n}2)\cdot 2\cos((\frac{\alpha+\gamma}2 + \beta)d^n)  : 1 : \exp (i\gamma d^n) /r^{d^n}).\]
Hence
\[(z_1 : z_2 : z_3 : 1) \in W^s(\{(e^{i\sigma} : 2\cos\tau e^{\frac{i\sigma}2} : 1 : 0) : 0 \le \sigma < 2\pi,   0 \le \tau < \pi\},  f) = W^s(J_2(f_{\Pi}), f).\]
Then the assertion (2) follows.

(3):  We assume that \quad \(r_1 \ge r_2 \ge r_3\). \enskip If \enskip \(a > b = 1 > c \ge d,\) \enskip  then there are four cases :
\[(i)\enskip r_4 > r_1 = 1 > r_2 \ge r_3, \quad (ii) \enskip r_1 > r_4 = 1 > r_2 \ge r_3, \]
\[(iii) \enskip r_1 > r_2 = 1 > r_4 \ge r_3, \quad (iv)\enskip r_1 > r_2 = 1 > r_3 \ge r_4. \]
\[\mbox{Let} \quad M(z_1) : = \max\{r_1, r_2, r_3, r_4\}, \qquad \qquad\]
\[M(z_2) : = \max\{r_1r_2, r_1r_3, r_1r_4, r_2r_3, r_2r_4, r_3r_4\},\]
\[ M(z_3) : = \max\{\frac 1{r_1}, \frac 1{r_2}, \frac 1{r_3}, \frac 1{r_4}\}. \qquad \qquad\]

Let \(dom(z_j)\) be the set of the maximum elements that are equal to \(M(z_j)\).\\
 Case (i).  Then  \(dom(z_1) = \{r_4\}, \quad  dom(z_2) = \{r_1r_4\}, \quad  M(z_3) = \frac 1{r_3}\).\quad
Hence  \(M(z_1) =  M(z_2) >  M(z_3)\). \\
For other cases, we can show that \(dom(z_1)\) and \(dom(z_2)\) are singletons and that  \(M(z_1) =  M(z_2) >  M(z_3)\).  Hence if we set \(r: =  M(z_1) =  M(z_2) \),   then
\[f^n(z_1 : z_2 : z_3 : 1) \simeq (\exp(i\sigma d^n) : \exp(i \tau d^n)  : \varepsilon_n : \frac 1{r^{d^n}}), \quad \mbox{with}\enskip \varepsilon_n \to 0 \enskip( n \to \infty).\]
Hence
\[(z_1 : z_2 : z_3 : 1) \in W^s(\{(1 : e^{i\theta}  : 0 : 0) : 0 \le \theta < 2\pi\},   f) .\]

Similarly we can prove that  if \enskip \(a \ge b > c = 1 > d, \)  \enskip then
\[(z_1 : z_2 : z_3 : 1) \in W^s(\{(0 : e^{i\theta}  : 1 : 0) : 0 \le \theta < 2\pi\},   f) .\]
Then the assertion (3) follows.\\
(4):  If\enskip \((a-1)(b-1)(c-1)(d-1) \ne 0,\)    there are three cases:
\[(i) \enskip a > 1 > b \ge c \ge d, \quad (ii) \enskip a \ge b > 1 > c \ge d, \quad (iii)\enskip a \ge b \ge c > 1 > d.\]
Case (i). \quad  Then we see that \(M(z_1) >  M(z_2) ,  M(z_3)\) and \(dom(z_1)\) is a singleton.    
  \[{\mbox Hence} \quad (z_1 : z_2 : z_3 : 1) \in W^s((1 : 0  : 0 : 0), f) .\]
Case (ii). \quad  Then we see that \(M(z_2) >  M(z_1) ,  M(z_3)\)  and \(dom(z_2)\) is a singleton.  
  \[{\mbox Hence} \quad(z_1 : z_2 : z_3 : 1) \in W^s((0 : 1  : 0 : 0), f) .\]
 Case (iii). \quad  Then we see that \(M(z_3) >  M(z_1) ,  M(z_2)\) and \(dom(z_3)\) is a singleton.  
  \[{\mbox Hence} \quad(z_1 : z_2 : z_3 : 1) \in W^s((0 : 0  : 1 : 0), f) .\]
\end{proof}

\section{ Julia sets  \(J_1\),  \(J_2\) and external rays}

External rays for holomorphic endomorphisms of  \( {\mathbb P^k}\)  are introduced  by Bedford and Jonsson \cite{BJ}.  We review some results in  \cite{BJ}.  Global stable manifolds at each point of a in   \( J_{\Pi}\)  is defined by  
\[W^s(a) = \{x \in  {\mathbb P^k} : d(f^jx, f^ja) \to 0 \quad \mbox{as}\quad  j \to \infty\}.\]
Note that \(W^s(a)\) contains all the local stable manifold \(W^s_{loc}(b)\) for \(b \in J_{\Pi}\)  with  \(f_{\Pi}^nb = f_{\Pi}^na, \quad n \ge 0.\)\quad
Divide \(W^s(a)\) into stable disks \(W_a\).  Let \(\mathcal{E}_a\) denote the set of all gradient lines in \(W_a\) and let the set \(\mathcal{E}\) of external rays be the union of all \(\mathcal{E}_a\).  Note that \(f\) maps gradient lines to gradient lines.  

In this paper, using  'B\(\ddot o\)ttcher coordinate' we construct global external rays.  We consider 
\(\Phi _1(re^{i\alpha }, e^{i\beta }, \frac 1r e^{i\gamma }) ; \)
\begin{equation}
\begin{split}
z_1 = re^{i\alpha  } + e^{i\beta }+\frac {e^{i\gamma}}r +  e^{i(-\alpha -\beta -\gamma)},\qquad \qquad \qquad \qquad \qquad \qquad\\
z_2 = re^{i(\alpha+\beta)  } + e^{i(\alpha+\gamma ) } +re^{i(-\gamma -\beta )}+ \frac 1r e^{i(\beta +\gamma)} + e^{i( -\alpha-\gamma )} + \frac 1r e^{-i(\alpha +\beta )},\\
z_3 =\frac 1re^{-i\alpha  } + e^{-i\beta } + re^{-i\gamma} + e^{i(\alpha +\beta +\gamma )}.\qquad \qquad \qquad \qquad \qquad \qquad\\
\end{split}
\end{equation}
Let 　\(R(\alpha, \beta, \gamma ; r)\) denote this point 
\(\quad \Phi _1(re^{i\alpha }, e^{i\beta }, \frac 1r e^{i\gamma })\quad \mbox{in} \quad \mathbb {P}^3.\)
Then using an argument similar to the proof of Proposition 3.3 (2) , we can prove that 
\[ R(\alpha, \beta, \gamma ; \infty)  = (e^{i(\alpha+\gamma)  }  : (2\cos(\frac{\alpha +\gamma }2 + \beta ))e^{i\frac{\alpha +\gamma }2} : 1: 0)  \in  J_\Pi, \qquad\]
where
\[ R(\alpha, \beta, \gamma ; \infty) : = \lim_{r \to \infty}
R(\alpha, \beta, \gamma ; r).\]
Clearly,  \(\quad R(\alpha, \beta, \gamma ; 1) \in K(f)\) and  \(R(\alpha, \beta, \gamma ; r) = R(\alpha,  - \alpha -\beta - \gamma, \gamma  ; r)\). 

 Define an {\it external ray} by \(R(\alpha, \beta, \gamma )  :   = \{R(\alpha, \beta, \gamma ; r) : r  > 1\}.\) 

(External rays of \(f_h\) are given by 
\(\quad \{\Phi _2(re^{i\alpha }, e^{i\beta }, \frac 1r e^{i\gamma }) : r > 1\}\).)   

Clearly,
 \[ f(R(\alpha, \beta, \gamma ; r)) =  R(d\alpha, d\beta, d\gamma ; r^d).\]   Then
\[ f(R(\alpha, \beta, \gamma)) =  R(d\alpha, d\beta, d\gamma),\]
and    
\[\mbox{if} \enskip r > 1, \enskip \lim_{n \to \infty}f^n(R(\alpha, \beta, \gamma ; r)) \in J_{\Pi}.\] 
We  set
\[D(\alpha+\gamma , \beta ) : = \bigcup _{0\leq \theta < 2\pi }R(\alpha - \theta , \beta , \gamma + \theta ).\] 
By the above equality,  we have \enskip \(f(D(\alpha+\gamma , \beta) )  = D(d(\alpha+\gamma), d\beta )\).　
Next lemma shows that  \(D(\alpha+\gamma , \beta ) \) is a stable disk passing through 
\(R(\alpha, \beta, \gamma ; \infty)\).
\begin{lemma} \label{lemma:4-1}
\(D(\alpha+\gamma , \beta ) \subset W^s( R(\alpha, \beta, \gamma ; \infty))\) .
\end{lemma}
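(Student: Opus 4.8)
The plan is to follow the forward orbit of a generic point of $D(\alpha+\gamma,\beta)$ alongside the orbit of the base point $a := R(\alpha,\beta,\gamma;\infty)$, using the functional equation $f(R(\alpha,\beta,\gamma;r)) = R(d\alpha,d\beta,d\gamma;r^d)$, and then to show the two orbits approach each other geometrically. First I would read off from the explicit formula $a = (e^{i(\alpha+\gamma)}:2\cos(\frac{\alpha+\gamma}2+\beta)e^{i\frac{\alpha+\gamma}2}:1:0)$ that this point depends only on the sum $\alpha+\gamma$ and on $\beta$; hence $R(\alpha-\theta,\beta,\gamma+\theta;\infty) = a$ for every $\theta$. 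Letting $r\to\infty$ in the functional equation and using continuity of $f$ on $\mathbb{P}^3$ gives $f(a) = R(d\alpha,d\beta,d\gamma;\infty)$, so $f^j(a) = R(d^j\alpha,d^j\beta,d^j\gamma;\infty)$ for all $j$.

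A generic point of $D(\alpha+\gamma,\beta)$ is $x = R(\alpha-\theta,\beta,\gamma+\theta;r)$ with $\theta\in[0,2\pi)$ and $r>1$, and iterating the functional equation gives
\[
f^j(x) = R\bigl(d^j(\alpha-\theta),\,d^j\beta,\,d^j(\gamma+\theta);\,r^{d^j}\bigr).
\]
Its angular data satisfy $d^j(\alpha-\theta)+d^j(\gamma+\theta) = d^j(\alpha+\gamma)$, with middle entry $d^j\beta$, exactly matching the angular data of $f^j(a)$. Thus, by the first paragraph, the $r\to\infty$ limit of $f^j(x)$ at each fixed stage $j$ is precisely $f^j(a)$: the two orbits carry identical angular data and differ only in the radius, namely $r^{d^j}$ versus $\infty$.

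The main work is a uniform convergence estimate, and this is where I expect the real obstacle to lie: I would show $R(\cdot,\cdot,\cdot;r)\to R(\cdot,\cdot,\cdot;\infty)$ \emph{uniformly} in the three angular variables, with Fubini--Study error $O(1/r)$. Dividing the affine representative $(z_1,z_2,z_3,1)$ by $r$, inspection of (4.1) shows that each coordinate of the resulting vector $v_r$ converges to the corresponding entry of $v_\infty = (e^{i\alpha},\,e^{i(\alpha+\beta)}+e^{-i(\gamma+\beta)},\,e^{-i\gamma},\,0)$ with a correction bounded by an absolute constant times $1/r$. Because the first entry of $v_\infty$ has modulus $1$, one has $\|v_\infty\|\ge 1$ and $\|v_r\|\ge \tfrac12$ for $r$ large, both uniformly in the angles; a standard projective-distance bound then gives $d([v_r],[v_\infty]) = O(1/r)$ uniformly. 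Uniformity is essential here, because the arguments $d^j(\alpha-\theta)$, $d^j\beta$, $d^j(\gamma+\theta)$ sweep around the circles as $j\to\infty$, so no pointwise limit in the angles is available.

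Finally I would combine the pieces: since $r>1$ forces $r^{d^j}\to\infty$, applying the uniform estimate at radius $r^{d^j}$ yields
\[
d\bigl(f^j(x),f^j(a)\bigr) = O\bigl(r^{-d^j}\bigr)\longrightarrow 0\qquad(j\to\infty),
\]
so $x\in W^s(a)$. As $x$ was an arbitrary point of $D(\alpha+\gamma,\beta)$, the inclusion $D(\alpha+\gamma,\beta)\subset W^s(R(\alpha,\beta,\gamma;\infty))$ follows.
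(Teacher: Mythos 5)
Your proposal is correct and takes essentially the same approach as the paper: both track the forward orbit of a ray point via the functional equation $f(R(\alpha,\beta,\gamma;r)) = R(d\alpha,d\beta,d\gamma;r^d)$ and compare it with the orbit of the center $R(\alpha,\beta,\gamma;\infty)$, whose angular data ($d^j(\alpha+\gamma)$ and $d^j\beta$) agree at every stage, so that only the radial part $r^{d^j}\to\infty$ distinguishes the two. Your explicit uniform $O(1/r)$ Fubini--Study estimate is precisely what the paper's informal ``$\simeq$'' dominant-term asymptotics amounts to, so you have simply made the same argument more rigorous.
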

\begin{proof}
Let \((z_1,  z_2,  z_3 )\)  be any point of  \(R(\alpha - \theta , \beta , \gamma + \theta ).\)   The dominant terms of　\(z_1, z_2\)  and   \(z_3 \) \quad are \enskip \(re^{i(\alpha-\theta)}, \enskip re^{i(\alpha+\beta-\theta)} + re^{i(-\beta-\gamma-\theta)} \)\enskip and \enskip \(re^{-i(\gamma+\theta)} ,\enskip\) respectively.
As in the proof of Proposition 3.3(2), we can prove that
\[f^n(z_1 : z_2 : z_3 : 1) \simeq  (\exp(i(\alpha+\gamma) d^n) : \exp(i(\alpha+\gamma) \frac{d^n}2)\cdot 2\cos((\frac{\alpha+\gamma}2 + \beta)d^n)  : 1 : \exp (i(\gamma+\theta) d^n) /r^{d^n}).\]
On the other hand, by Proposition 3.1, we have
\[f^n_{\Pi}(R(\alpha, \beta, \gamma ; \infty)) = f^n_{\Pi}(e^{i(\alpha+\gamma)} : e^{\frac{\alpha+\gamma}2 i}  (e^{(\frac{\alpha+\gamma}2+\beta) i} + e^{-(\frac{\alpha+\gamma}2+\beta) i}) : 1 : 0) \] 
\[= (\exp(i(\alpha+\gamma) d^n) : \exp(i(\alpha+\gamma) \frac{d^n}2)\cdot 2\cos((\frac{\alpha+\gamma}2 + \beta)d^n)  : 1 : 0).\]
Then the lemma follows.
\end{proof}
From Proposition 3.3, we deduce that the set 
\(\{D(\alpha+\gamma, \beta)\}\)  forms a foliation of 
\( W^s(J_{\Pi},  f)\).

Now we will determine the Julia sets  \(J_2(f)\)  and   \(J_1(f)\).  Using a result in \cite{BJ}  we will determine  \(J_2(f)\).  Corollary 8.5 of \cite{BJ}  reads as follows.    For almost every \(a \in J_{\Pi}\), we have \(\overline{W^s(a)} = supp(T^{k-1} \, \llcorner \, \{G > 0\}).\)  Here \(G\) is the Green function of \(f\).  

   Using this and Proposition 3.3, we have the following.  Let \(F(f)\) denote the Fatou set of \(f\).

\begin{theorem} 
\( {\mathbb P^3}\) decomposes into the following sets; \\
(1) \(J_3(f) = K(f)\),\\ 
(2)  \(J_2(f)\setminus J_3(f) = W^s(J_2(f_{\Pi}), f) = \cup D(\alpha+\beta, \beta),\)\\
(3)  \(J_1(f)\setminus J_2(f) = W^s(S_1 \cup S_2 , f)\),\\
(4) \(F(f) = W^s(P_1\cup P_2\cup P_3, f)\).
\end{theorem}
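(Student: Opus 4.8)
The plan is to play two partitions of $\mathbb{P}^3$ against each other. On one side, the filtration $J_3(f) \subset J_2(f) \subset J_1(f)$ together with $F(f) = \mathbb{P}^3 \setminus J_1(f)$ splits $\mathbb{P}^3$ into the four layers $J_3$, $J_2\setminus J_3$, $J_1\setminus J_2$, $F$. On the other side, Proposition \ref{pro:2} (extended to $\Pi$ through the skew-product description of $f_\Pi$ from the proof of Theorem 3.2) splits $\mathbb{P}^3$ into the four stable sets $K(f)$, $W^s(J_2(f_\Pi),f)$, $W^s(S_1\cup S_2,f)$, $W^s(P_1\cup P_2\cup P_3,f)$, according to the modulus pattern of $(\mid t_1\mid,\mid t_2\mid,\mid t_3\mid,\mid t_4\mid)$. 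Since both are partitions into four pieces, it suffices to match three of the four pairs; the remaining identification then follows automatically.

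First, part (1) is exactly Theorem 2.7, so $J_3 = K(f)$. For part (2) I would invoke Corollary 8.5 of \cite{BJ}, which for $k=3$ reads $\overline{W^s(a)} = \operatorname{supp}(T^{2}\,\llcorner\,\{G>0\})$ for almost every $a \in J_\Pi$; note the right-hand side is independent of $a$. Since $\{G=0\}=K(f)=J_3$ and $J_3\subset J_2$, restricting $T^2$ to $\{G>0\}$ deletes exactly $J_2\cap\{G=0\}=J_3$, so $\operatorname{supp}(T^{2}\,\llcorner\,\{G>0\}) = \overline{J_2\setminus J_3}$. On the other side, the remark following Lemma \ref{lemma:4-1} already exhibits $W^s(J_2(f_\Pi),f) = \bigcup D(\alpha+\gamma,\beta)$ as a foliation by stable disks, each lying in some $W^s(a)$. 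Matching these — using that the almost-every $a$ all yield the same closed set and that the stable disks sweep out $J_2\setminus J_3$ — gives $J_2\setminus J_3 = W^s(J_2(f_\Pi),f)$, with the closure bookkeeping reconciled through the foliation.

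For part (4), the points $P_1,P_2,P_3$ are superattracting fixed points, so each basin $W^s(P_i,f)$ is open and consists of points of normality; hence $W^s(P_1\cup P_2\cup P_3,f)\subseteq F(f)$. For the reverse inclusion I would rule out any other Fatou component: a point of $W^s(S_1\cup S_2,f)$ has its orbit accumulating on $S_1$ or $S_2$, on which $f$ acts as an expanding circle map (the monomial restriction $(z_1:z_2)\mapsto(z_1^d:z_2^d)$ of $f_\Pi$ to the invariant line $\{z_3=0\}$, and symmetrically on $\{z_1=0\}$), so these circles lie in $J_1(f)$ and their stable sets cannot meet the Fatou set. Combined with parts (1) and (2), this makes $F(f)$ disjoint from $K(f)$, $W^s(J_2(f_\Pi),f)$ and $W^s(S_1\cup S_2,f)$, forcing $F(f) = W^s(P_1\cup P_2\cup P_3,f)$. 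Part (3) then follows by elimination from the two partitions:
\[ J_1\setminus J_2 = \mathbb{P}^3 \setminus (J_2\cup F) = \mathbb{P}^3 \setminus \bigl(K(f)\cup W^s(J_2(f_\Pi),f)\cup W^s(P_1\cup P_2\cup P_3,f)\bigr) = W^s(S_1\cup S_2,f). \]

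The hard part will be the current-theoretic step in part (2): both the identity $\operatorname{supp}(T^{2}\,\llcorner\,\{G>0\}) = \overline{J_2\setminus J_3}$ and the passage from the almost-every-$a$ statement of Corollary 8.5 to the global stable set, where one must verify that the foliation by disks $D(\alpha+\gamma,\beta)$ accounts for the closure precisely. The second delicate point is confirming $W^s(S_1\cup S_2,f)\subset J_1(f)$ rather than $F(f)$; the cleanest route is to show directly that the iterates $\{f^n\}$ fail to be normal along the chaotic circle dynamics, so that the two circles and their stable sets genuinely carry the first Julia set but not the second.
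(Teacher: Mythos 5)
Your overall architecture is the same as the paper's (Theorem 2.7 for (1); Corollary 8.5 of \cite{BJ} plus the stable-disk foliation for (2); the stable-set partition, identification of the Fatou set, and elimination for (3)--(4)), but the two steps you defer or shortcut are genuine gaps, and they are exactly where the paper's work lies. In part (2), the phrase ``the stable disks sweep out $J_2\setminus J_3$'' is the statement to be proved, so it cannot be fed into the ``matching'' as an ingredient; as written, this is circular. Concretely, the inclusion $\bigcup D(\alpha+\gamma,\beta)\subset J_2$ does not follow from Corollary 8.5 alone: that corollary controls $\overline{W^s(a)}$ only for almost every $a\in J_\Pi$, while a given disk is centered at an arbitrary point of $\mathcal{M}$. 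The paper bridges this by proving its (4.3), that $\bigcup_n f_\Pi^{-n}(f_\Pi^n(a))$ is dense in $\mathcal{M}$ (via the skew-product structure: $z\mapsto z^d$ on the base circle, Chebyshev dynamics on fibers), and then its (4.4), using continuity of $(\alpha,\beta,\gamma,r)\mapsto R(\alpha,\beta,\gamma;r)$ to place every disk inside the closed set $\overline{W^s(a)}$; one also needs the reverse inclusion $W^s(a)\subset\bigcup D(\alpha+\gamma,\beta)$, which the paper extracts from the exhaustive modulus analysis of Proposition 3.3 (its (4.6)), and the closedness $\overline{\bigcup D}=\bigcup\overline{D}$, so that deleting $J_3$ from the closure leaves exactly $\bigcup D$. (You could replace (4.3) by density of the full-measure set of good $a$, since $\operatorname{supp}\mu_\Pi=\mathcal{M}$, but the density-plus-continuity argument still has to be carried out.) Your support identity $\operatorname{supp}(T^2 \llcorner \{G>0\})=\overline{J_2\setminus J_3}$ is fine for a positive current restricted to an open set, and is consistent with the paper's different route, which instead shows $\operatorname{supp}(T^2 \llcorner \{G>0\})=\operatorname{supp}T^2$ by a test-form argument; that part of your reorganization is harmless.

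The second gap is in part (4): the inference ``$S_1,S_2$ lie in $J_1(f)$, hence their stable sets cannot meet the Fatou set'' is not a valid principle. In one variable a parabolic fixed point lies in the Julia set while its basin, which is contained in the stable set of that point, is a union of Fatou components; so membership of the target set in the Julia set does not by itself exclude the stable set from $F(f)$. What is true here, and what you need to argue, is that $W^s(P_1\cup P_2\cup P_3,f)$ is open and dense (its complement is the image under $\Phi_1$ of the real hypersurface where some $\mid t_i\mid =1$, together with its analogue inside $\Pi$); then for $x\in W^s(S_1\cup S_2,f)$ any locally uniform limit of a subsequence of $(f^n)$ would take values $P_i$ on a dense set of points near $x$ but a value on $S_1\cup S_2$ at $x$ itself, contradicting continuity of the limit. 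So $W^s(S_1\cup S_2,f)\cap F(f)=\emptyset$ for this reason, not because of expansion on the circles per se. (The paper is admittedly terse at this same point, but the parabolic example shows your shortcut, as stated, would fail.) With these two repairs your plan goes through, and it is then essentially the paper's proof.
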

\begin{proof}\quad (1): The assertion (1) is shown in Theorem 2.7 (1). 

(2):  To prove (2),  we need Corollary 8.5 of \cite{BJ}.  We know in Theorem 3.2 that 
\[J_2(f_{\Pi}) = \mathcal{M} = \{(e^{i\theta}, xe^{\frac{i\theta}2}) :  0 \le \theta < 2{\pi},  -2 \le x \le 2\}.\]
And  the maximal entropy measure \(\mu_{\Pi}\) is given there.  By Corollary 8.5 of \cite{BJ},  we see that there is an element a in \(\mathcal{M}\)  such that
\begin{equation}
\overline{W^s(a)} = supp(T^{2} \, \llcorner \, \{G > 0\}).
\end{equation} 
Set  \(a = (e^{i\theta}, xe^{\frac {\theta}2 i})\).  

We claim that
\begin{equation}
J_2(f_{\Pi}) = \overline{\bigcup_n f^{-n}_{\Pi}(f^n_{\Pi}(a))} .
\end{equation}
To see this, we know in the proof of  Theorem 3.2 that 
\[f_{\Pi}(z, w) = (z^d, h_2^{(d)} (z, w, 1)).\]
Since \(e^{i\theta} \in J_p\)  with  \(p(z) = z^d\),  \(\bigcup_n p^{-n}( e^{i\theta} )\) is dense in \(J_p = S^1\).  Also  the set  \(\bigcup_n p^{-n}(p^n( e^{i\theta}) )\) is dense in \(J_p\) . 
From Theorem 3.2 (2) we know that on the fibers  \(\{\sigma^{-1}(z) : z \in \bigcup_n p^{-n}(p^n( e^{i\theta}) )\}, \quad h_2^{(d)}\)  acts as the Chebyshev map \(T_d\).  Then (4.3) follows.

For any  \(c \in \overline{\bigcup_n f^{-n}_{\Pi}(f^n_{\Pi}(a))} ,\)  there is a sequence \(\{b_m\}\)  with 
\(b_m \in {\bigcup_n f^{-n}_{\Pi}(f^n_{\Pi}(a))} \) such that  \(b_m \to c\)  as \(m \to \infty\).   Since \(b_m \in W^s(a) \),  it follows that \enskip  \(c \in \overline{W^s(a)}\).  
Set \(c =  R(\alpha, \beta, \gamma ; \infty)\)  and \(b_m =  R(\alpha_m, \beta_m, \gamma_m ; \infty)\) .
Then we have \((\alpha_m+\gamma_m , \beta_m ) \to (\alpha+\gamma , \beta )\).

We claim that
\begin{equation}
D(\alpha+\gamma , \beta ) \subset \overline{W^s( a)}.
\end{equation}
Indeed.  We have shown that  the center \(R(\alpha, \beta, \gamma ; \infty)\)  of the disk \enskip \(D(\alpha+\gamma , \beta )\) is in  \(\overline{W^s( a)}\).  For any point  \(R(\alpha - \theta , \beta , \gamma + \theta ; r )\)  in  \(D(\alpha+\gamma , \beta )\),  we can select a sequence \(\{R(\alpha - \theta , \beta_m , \alpha_m + \gamma_m - \alpha + \theta ; r)\}\)  such that  
 \[R(\alpha - \theta , \beta_m , \alpha_m + \gamma_m - \alpha + \theta ; r) \to R(\alpha - \theta , \beta , \gamma + \theta ; r )\quad \mbox{as} \quad m \to \infty.\]
Hence from Lemma \ref{lemma:4-1}, we have 
\[R(\alpha - \theta , \beta_m , \alpha_m + \gamma_m - \alpha + \theta ; r) \in D(\alpha_m+\gamma_m , \beta_m ) \subset W^s(R(\alpha_m, \beta_m, \gamma_m ; \infty) ) .\]
Since
\[W^s(R(\alpha_m, \beta_m, \gamma_m ; \infty) )  = W^s(b_m) = W^s(a),\]
it follows that \(R(\alpha - \theta , \beta_m , \alpha_m + \gamma_m - \alpha + \theta ; r) \in W^s(a)\).
Then  \(R(\alpha - \theta , \beta , \gamma + \theta ; r ) \in \overline{W^s(a)}\).
Therefore (4.4) follows.  

Hence from (4.3) we deduce that  
\begin{equation}
\bigcup_{\alpha+\gamma , \beta}D(\alpha+\gamma , \beta ) \subset \overline{W^s( a)}.
\end{equation}

Conversely we claim that
\begin{equation}
\bigcup_{\alpha+\gamma , \beta}D(\alpha+\gamma , \beta ) \supset {W^s( a)}.
\end{equation}
Indeed.  In the first place we consider any element  \(b\) of  \(W^s( a) \cap \Pi\).  From the proof of Theorem 3.2 (1),  we may assume that  \(b = (z : w : v)\)  with  \(v \ne 0\).  By the case 2 of the proof of Theorem 3.2 (1),  we  see that  \(b \in J_{\Pi}\).  Then \(b \in \bigcup_{\alpha+\gamma , \beta}D(\alpha+\gamma , \beta ) \).

Next we assume that \((z_1, z_2, z_3 )\)  is an element of \(W^s(a)\) in \({\mathbb C}^3\).  Then from Proposition 3.3, we see that \((z_1, z_2, z_3 )\) is written as  \(\Phi_1(t_1, t_2, t_3 )\)  in the assertion (2) of Proposition 3.3.  Then  we may set \quad \((z_1, z_2, z_3 )  = \Phi_1(re^{i\alpha}, e^{i\beta},  \frac 1r e^{i\gamma})\).
Hence
\((z_1, z_2, z_3 ) \in  R(\alpha, \beta, \gamma ) \subset D(\alpha+\gamma , \beta )\).
Then (4.6) follows.  

From (4.5) and (4.6), it follows that \enskip
\(\overline{W^s(a)} = \overline{\bigcup D(\alpha+\gamma , \beta )}.\)  \enskip The set \(\bigcup \overline{ D(\alpha+\gamma , \beta )}\) \enskip is a union of closed disks each of which is centered at a point of the M\(\ddot{o}\)bius strip.  Hence \enskip \(\bigcup \overline{ D(\alpha+\gamma , \beta )}\) \enskip is a closed set.  Then \enskip
\(\overline{\bigcup D(\alpha+\gamma , \beta )} =  \bigcup \overline{ D(\alpha+\gamma , \beta )}\).  \enskip Thus from (4.2) we have 
\[supp(T^2 \llcorner \{G > 0\}) = \overline{\bigcup_{\alpha+\gamma , \beta}  D(\alpha+\gamma , \beta )}= \bigcup_{\alpha+\gamma , \beta} \overline{ D(\alpha+\gamma , \beta )}.\]  

Set \enskip  \(A : = \{G > 0\}\).  Let \(U_1\) and \(U_2\) be the maximal open sets in which \enskip \(T^2 = 0\) \enskip and \enskip \(T^2 \llcorner A = 0\),  respectively.  Then \enskip \(supp T^2 = {\mathbb P}^3 \setminus U_1\)  and \enskip \(supp(T^2 \llcorner A ) = {\mathbb P}^3 \setminus U_2\).  Since \enskip \( K(f) = J_3  \subset suppT^2\) \enskip and \enskip \(\bigcup R(\alpha, \beta, \gamma ; 1) = K(f) \subset supp(T^2 \llcorner A )\),  \enskip we have 
\begin{equation}
U_i \cap K(f) = \emptyset , \enskip \enskip i = 1, 2.
\end{equation}
Let \(\psi\) be any 2-form of class \(C^{\infty}\) with compact support in \(U_1\).  Then by definition of   \(U_1\) and (4.7),  we have 
\[0 = <T^2, \psi> = <T^2, \psi \land \chi_A> = <T^2 \llcorner A,  \psi>,\]
where \(\chi_A\)  is a characteristic function of \(A\).  Then we have  \(U_1 \subset U_2\).  Similarly we can prove that   \(U_2 \subset U_1\).  Then it follows that \enskip
\(supp T^2  =  supp(T^2 \llcorner A )\).
Since \( K(f) = J_3(f)\),  we have
\(J_2(f) \backslash J_3(f) = \bigcup D(\alpha+\gamma , \beta )\).  The assertion (2) follows.  

 (3) and (4):  To prove (3) and (4) we note that if \(f\) is a holomorphic map from \({\mathbb P}^k\) to \({\mathbb P}^k\) , then the Julia set \(J_1(f)\) is the complement of the Fatou set of \(f\).  See Theorem 3.3.2 in\cite{S}.   

Note that \({\mathbb P}^k  = {\mathbb C}^3 \cup \Pi\).  In the first place we consider the set 
\({\mathbb C}^3\).  We have shown in Proposition 3.3 that    \({\mathbb C}^3\) decomposes into four categories.   Only the case (4) of Proposition 3.3  corresponds to the Fatou set \(F(f)\).

  Next we consider a decomposition of \(\Pi\).
We have shown in the proof of Theorem 3.2 that 
\[f_{\Pi}(z : w : v) = (z^d : h_2^{(d)} (z , w , v) : v^d).\]
Case 1 : \(v \ne 0\). \quad If  \(z = 0\),
\[f_{\Pi}(0 : w : v) = (0 : h_2^{(d)} (0 , w , v) : v^d).\]
From (2.4), we see that \( h_2^{(d)} (0, w, v) = w^d.\)
\[\mbox{Then if} \quad  \mid w \mid = \mid v \mid \quad \mbox{then} \quad (0 : w : v) \in S_2.\]
\[\mbox{If} \quad  \mid w \mid \ne \mid v \mid \quad \mbox{then} \quad (0 : w : v) \in W^s(P_2 \cup P_3, f_{\Pi}).\]
Next we assume that  \(z \ne 0\).  Then
\[f_{\Pi}(z, w) = (z^d, h_2^{(d)} (z, w, 1)).\]
We use the argument in the proof of Theorem 3.2.
Set \(z = t_1\)  and \(w = \sqrt {t_1}( \sqrt {t_2} + \frac 1{ \sqrt {t_2}}).\) And set  \(t_1 = r_1e^{i\sigma}\)  and \(t_2 = r_2e^{i\tau}\).
Then from (3.3) we have 
\[f^n_{\Pi}(z, w) = (r_1^{d^n}\exp(i\sigma d^n) , r_1^{d^n/2}\exp(i\sigma d^n/2)(r_2^{d^n/2}\exp(i\tau d^n/2) + r_2^{-d^n/2}\exp(-i\tau d^n/2))).\]
Hence if  \(r_1 = r_2 = 1,\)  then \((z, w)\) is an element of the M\(\ddot{o}\)bius strip \(\mathcal{M}\) .
\[\mbox{If} \quad  r_1 \ne 1 \quad \mbox{and } \quad (r_1 = r_2  \enskip \mbox{or} \enskip r_1r_2 = 1), \quad \mbox{then} \quad (z : w : 1) \in W^s(S_1 \cup S_2, f_{\Pi}).\]
\[\mbox{If} \quad r_1 \ne r_2  \enskip \mbox{and} \enskip r_1r_2 \ne 1,  \quad \mbox{then} \quad (z : w : 1) \in W^s(P_1 \cup P_2 \cup P_3, f_{\Pi}).\qquad\]
Case 2 : \(v = 0\).  Using an argument similar to the proof of the case \(z = 0\),  we have the following results. 
\[\mbox{If} \quad  \mid z \mid = \mid w \mid, \quad \mbox{then} \quad (z : w : 0) \in S_1.\]
\[\mbox{If} \quad  \mid z \mid \ne \mid w \mid, \quad \mbox{then} \quad (z : w : 0) \in W^s(P_1 \cup P_2, f_{\Pi}).\qquad\]
Now we combine the results on \({\mathbb C}^3\) and \(\Pi\).  Since the Fatou set of \(f\) is \( W^s(P_1 \cup P_2 \cup P_3, f)\),  the assertions  (3) and (4) follow.
\end{proof}

By direct computations, we can prove that \(J_1(f)\)  is a foliated space and leaves of the space are topological polydisks in  \({\bf \mathbb C^2}\).
\medskip

Next we consider  external rays in \(R_3 (=  \{(z_1,z_2,\bar z_1) : z_1 \in {\bf \mathbb C},  z_2 \in {\bf \mathbb R}\})\).  Recall that  any point\enskip \(R(\alpha, \beta, \gamma ; \infty) \in \mathcal{M}\) \enskip has a disk \(D(\alpha+\gamma, \beta)\) centered at itself.

 \begin{pro} \label{pro:Q}
\(\mbox{If} \quad  R(\alpha, \beta, \gamma) \subset  R_3 , \quad \mbox{then} \quad \alpha = \gamma.\)
 \(R(\alpha, \beta, \alpha)\) is a half-line and lands at a point of the astroidalhedron  \(\mathcal{A}\). 
Hence an external ray in  \(D(\alpha+\gamma, \beta)\) included in \(R_3\) is only the external ray  \(R(\frac{\alpha+\gamma}2, \beta, \frac{\alpha+\gamma}2)\).
\end{pro}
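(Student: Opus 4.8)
The plan is to work directly with the explicit parametrization (4.1) of the ray $R(\alpha,\beta,\gamma;r)$ and to read off from it both the membership condition for $R_3$ and the geometric shape of the ray. First I would impose the defining relation $z_1=\bar z_3$ of $R_3$. Writing out $z_1$ and $\bar z_3$ from (4.1), the summands $e^{i\beta}$ and $e^{-i(\alpha+\beta+\gamma)}$ cancel, and collecting the rest gives
\[
\Bigl(r-\tfrac1r\Bigr)e^{i\alpha}=\Bigl(r-\tfrac1r\Bigr)e^{i\gamma}.
\]
Since $r>1$ makes $r-\tfrac1r\neq0$, this forces $e^{i\alpha}=e^{i\gamma}$, i.e. $\alpha=\gamma$. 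Conversely, substituting $\gamma=\alpha$ into $z_2$ collapses it to $2\bigl(r+\tfrac1r\bigr)\cos(\alpha+\beta)+2\cos2\alpha$, which is real, and it makes $z_1=\bar z_3$ hold identically; so the rays lying in $R_3$ are \emph{exactly} those with $\alpha=\gamma$. This settles the first assertion and will be the criterion used again at the end.

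Next, setting $\gamma=\alpha$ and introducing the single real parameter $s:=r+\tfrac1r$ (which ranges over $(2,\infty)$ as $r$ ranges over $(1,\infty)$), I would rewrite the point $(p_1,p_2,q)\in R_3$ in affine form
\[
(p_1,p_2,q)=\mathbf A+s\,\mathbf B,
\]
where $\mathbf B=(\cos\alpha,\sin\alpha,2\cos(\alpha+\beta))$ and $\mathbf A$ is the constant vector built from the remaining terms. Because $s>2$ this is visibly a half-line, proving $R(\alpha,\beta,\alpha)$ is a half-line. To locate the landing point I let $r\to1^+$, i.e. $s\to2$, and evaluate $R(\alpha,\beta,\alpha;1)$; using $\cos\beta+\cos(2\alpha+\beta)=2\cos\alpha\cos(\alpha+\beta)$ and $\sin\beta-\sin(2\alpha+\beta)=-2\sin\alpha\cos(\alpha+\beta)$, this point becomes exactly
\[
2(\cos\alpha,\sin\alpha,\cos2\alpha)+2\cos(\alpha+\beta)(\cos\alpha,-\sin\alpha,2),
\]
which is the point of formula (2.8), hence a point of $\partial K(f)=\mathcal A$. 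Thus the half-line lands on the astroidalhedron. Conceptually this is forced: at $r=1$ two of the four points $e^{i\alpha},e^{i\beta},e^{i\alpha},e^{-i(2\alpha+\beta)}$ coincide, which is precisely a wall of the fundamental region and so a point of $\mathcal A$.

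Finally, for the last assertion I would use $D(\alpha+\gamma,\beta)=\bigcup_{0\le\theta<2\pi}R(\alpha-\theta,\beta,\gamma+\theta)$. By the first step a ray $R(\alpha-\theta,\beta,\gamma+\theta)$ from this disk lies in $R_3$ precisely when its first and third angles agree, i.e. $\alpha-\theta=\gamma+\theta$, whose solution is $\theta=\tfrac{\alpha-\gamma}2$; substituting back yields the ray $R(\tfrac{\alpha+\gamma}2,\beta,\tfrac{\alpha+\gamma}2)$, as claimed.

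I expect the main obstacle to be the second step: verifying cleanly that the $r\to1$ endpoint is an actual point of the surface $\mathcal A$ rather than merely an interior point of $K(f)$, and identifying the parametrization coming from (4.1) with the parametrization (2.8) of Proposition \ref{pro:C} through the sum-to-product identities. The first and third steps are short algebraic manipulations, but this matching is where the geometric content of the proposition really lies.
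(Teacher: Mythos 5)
Your proposal is correct and takes essentially the same route as the paper: the paper's proof likewise computes \(z_1-\bar z_3=(e^{i\alpha}-e^{i\gamma})\bigl(r-\tfrac1r\bigr)\) from (4.1) to force \(\alpha=\gamma\), rewrites the ray in the affine form (4.8) linear in \(r+\tfrac1r\), and concludes that it is a half-line landing on \(\mathcal{A}\). The only difference is one of detail: you make explicit the converse inclusion (that \(\alpha=\gamma\) indeed puts the ray in \(R_3\)), the sum-to-product identification of the \(r\to1\) endpoint with the parametrization (2.8), and the solution \(\theta=\tfrac{\alpha-\gamma}2\) for the last claim, all of which the paper leaves implicit.
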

\begin{proof}
By (4.1), we have \enskip \(z_1 - \bar z_3 = (e^{i\alpha} - e^{i\gamma})(r - \frac 1r)\) .    If   \enskip  \(z_1 = \bar z_3\)  \enskip then  \enskip \(\alpha = \gamma\).    \enskip In the case,  \enskip \(R(\alpha, \beta, \alpha ; r)\)  \enskip is expressed as 
\begin{equation}
z_1 = (r + \frac 1r)e^{i\alpha} +  e^{i\beta} + e^{i(-2\alpha-\beta)}, \quad z_2 = 2(r + \frac 1r)\cos(\alpha+\beta) + 2\cos 2\alpha.
\end{equation}
Therefore \(R(\alpha, \beta, \alpha)\) is a half-line and lands at a point of the astroidalhedron  \(\mathcal{A}\). 
\end{proof}
 
We extend the half-line \(R(\alpha, \beta, \alpha)\) to the interior of  \(K(f)\).   In (4.8), we substitute  \(e^{i\theta}\)  for \(r\).  That is,
\begin{equation}
\begin{split}
z_1 = e^{i(\alpha+\theta)} + e^{i(\alpha-\theta)} +  e^{i\beta} + e^{i(-2\alpha-\beta)}, \\
 z_2 = 4\cos\theta\cos(\alpha+\beta) + 2\cos 2\alpha,  \quad 0 \le  \theta  < 2\pi.
\end{split}
\end{equation}
We call this the {\it internal ray} of    
\(R(\alpha, \beta, \alpha)\)  and denote it by   \(R_0(\alpha, \beta, \alpha)\) .  
 \begin{pro} \label{pro:R1}
Internal rays  \(R_0(\alpha, \beta, \alpha)\) are classified into two categories. \\
(1)  If  \(\alpha + \beta = 0\) \enskip or \enskip \(\alpha + \beta = \pi, \) \enskip  then the internal ray is a ruling of  \(\mathcal{A}\).\\ 
(2)  If  \(\alpha + \beta \ne 0,  \pi\), \enskip  then the internal ray \(R_0(\alpha, \beta, \alpha)\)  links two external rays \(R(\alpha, \beta, \alpha)\)  and  \(R(\alpha+ \pi, \beta,  \alpha+ \pi)\) . 
And the internal ray  touches the surface  \(\mathcal{A}\).   
\end{pro}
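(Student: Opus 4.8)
The plan is to realise all three rays as subsets of a single affine line in \(R_3\cong{\bf\mathbb R}^3\), and then read off both assertions from elementary plane geometry of the tangent developable \(\mathcal{A}\) of Proposition \ref{pro:C}.

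First I would put the three rays on one line. In formula (4.8) the quantity \(r+\tfrac1r\) is the only place where \(r\) enters, and in (4.9) it is replaced by \(2\cos\theta\); writing \(t\) for this coefficient, the point becomes \(z_1=t\,e^{i\alpha}+e^{i\beta}+e^{-i(2\alpha+\beta)}\) and \(z_2=2t\cos(\alpha+\beta)+2\cos2\alpha\). Substituting \(\alpha\mapsto\alpha+\pi\) in (4.8) and using \(e^{i(\alpha+\pi)}=-e^{i\alpha}\) and \(e^{-2\pi i}=1\) shows that \(R(\alpha+\pi,\beta,\alpha+\pi;r)\) has exactly the same form with \(t=-(r+\tfrac1r)\). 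Hence, in the coordinates \((p_1,p_2,q)=(\mathrm{Re}\,z_1,\mathrm{Im}\,z_1,z_2)\), the external ray \(R(\alpha,\beta,\alpha)\) is the part \(t>2\), the internal ray \(R_0(\alpha,\beta,\alpha)\) is the segment \(-2\le t\le2\), and \(R(\alpha+\pi,\beta,\alpha+\pi)\) is the part \(t<-2\) of one affine line \(\ell(\alpha,\beta)\) with direction \(D=(\cos\alpha,\sin\alpha,2\cos(\alpha+\beta))\). This already gives the linking statement in (2): the internal ray is precisely the segment joining the two external half-lines.

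For (1) I would show \(\ell(\alpha,\beta)\) is a ruling of \(\mathcal{A}\). When \(\alpha+\beta=0\) (resp. \(\pi\)) we have \(\cos(\alpha+\beta)=1\) (resp. \(-1\)), so \(D\) is parallel to the ruling direction \((\cos u,-\sin u,2)\) of Proposition \ref{pro:C} at \(u=-\alpha\) (resp. \(u=\pi-\alpha\)). Matching the base point (\(t=0\)) against \(\chi(u,\cdot)\) forces the two lines to coincide, and comparing parameters shows that \(-2\le t\le2\) corresponds exactly to the admissible range \(-2-2\cos2u\le v\le2-2\cos2u\); thus the internal ray is that ruling. For the tangency in (2) I would exploit that \(\mathcal{A}\) is developable, so its tangent plane is constant along each ruling. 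Writing \(C(u)=(4\cos^3u,4\sin^3u,6\cos2u)\) for the astroid and using the identity \(C'(u)=-6\sin2u\,(\cos u,-\sin u,2)\), differentiating \(\chi\) shows the tangent plane along the ruling at parameter \(u_0\) has normal proportional to \(N(u_0)=(2\cos u_0,-2\sin u_0,-1)\). A short computation gives \(D\cdot N(u_0)=2\cos(u_0+\alpha)-2\cos(\alpha+\beta)\), which vanishes exactly for \(u_0=\beta\) or \(u_0=-2\alpha-\beta\) (the two values interchanged by the symmetry \(R(\alpha,\beta,\alpha)=R(\alpha,-2\alpha-\beta,\alpha)\)). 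Taking \(u_0=\beta\), I would then verify that the base point \(B\) of \(\ell(\alpha,\beta)\) at \(t=0\) satisfies the plane equation \((B-C(\beta))\cdot N(\beta)=0\): both \(B\cdot N(\beta)\) and \(C(\beta)\cdot N(\beta)\) reduce to \(2\cos2\beta\), so \(\ell(\alpha,\beta)\) lies in the tangent plane along the ruling \(u_0=\beta\). Since \(D\) is parallel to that ruling only when \(\alpha+\beta\in\{0,\pi\}\), the hypothesis \(\alpha+\beta\ne0,\pi\) guarantees that \(\ell(\alpha,\beta)\) meets the ruling in a single point, where it is tangent to — i.e. touches — \(\mathcal{A}\).

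The main obstacle is the tangency step. One must compute the tangent plane of the developable surface along a ruling correctly (the cleanest route is the relation \(C'(u)\parallel(\cos u,-\sin u,2)\)), establish the containment \(\ell(\alpha,\beta)\subset\{x:(x-C(\beta))\cdot N(\beta)=0\}\) via the matching \(2\cos2\beta=2\cos2\beta\), and finally confirm that the resulting contact point actually lies on the patch \(\mathcal{A}\) (within the admissible ruling range, and on the internal-ray segment \(-2\le t\le2\)) rather than on its linear extension. Everything else reduces to routine trigonometric simplification using the product-to-sum identities \(\cos\beta+\cos(2\alpha+\beta)=2\cos(\alpha+\beta)\cos\alpha\) and \(\cos(\alpha+\beta)\cos(\alpha-\beta)=\tfrac12(\cos2\alpha+\cos2\beta)\).
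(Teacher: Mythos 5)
Your proposal is correct, and for part (1) and the linking claim in (2) it is essentially the paper's own argument: the paper also substitutes \(e^{i\theta}\) for \(r\) in (4.8), so that the external ray corresponds to \(r+\tfrac1r>2\) and the internal ray to \(2\cos\theta\in[-2,2]\), and for \(\alpha+\beta=0,\pi\) it matches the resulting formula against the parametrization (2.8); your single affine line with parameter \(t\) is the same computation, cleanly packaged. Where you genuinely diverge is the tangency claim in (2). The paper uses no differential geometry there: by the remark after (2.7), \(\varphi_1\) maps \(int(R')\) onto \(int(K(f))\) and \(\partial R'\) onto \(\partial K(f)=\mathcal{A}\), so a point of the internal ray lies on \(\mathcal{A}\) exactly when two of the four unit numbers \(e^{i(\alpha+\theta)},\ e^{i(\alpha-\theta)},\ e^{i\beta},\ e^{-i(2\alpha+\beta)}\) appearing in (4.9) coincide. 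Pairwise equality occurs only at \(\theta=0,\pi\) (the endpoints, where the two external rays land), at \(\theta=\pm(\alpha-\beta)\), and at \(\theta=\pm(3\alpha+\beta)\); the remaining pair \(e^{i\beta}=e^{-i(2\alpha+\beta)}\) coincides precisely when \(\alpha+\beta\in\{0,\pi\}\), which is excluded. Hence the ray is not contained in \(\mathcal{A}\) and touches it at two interior points, and these contact points lie on the patch \(\mathcal{A}\) automatically, since a parameter coincidence is exactly what membership in \(\partial K(f)\) means. That last point is what your route still owes: your tangent-plane argument locates contact of the full line with the full ruling, so you must actually solve the incidence (it gives \(t=2\cos(\alpha-\beta)\), with \(\chi\)-parameter \(v=2\cos2\alpha-2\cos2\beta\) on the ruling \(u_0=\beta\)) and check both membership in the internal segment and in the admissible ruling range — routine, and both checks do succeed, so your plan is completable. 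In exchange, your approach buys more geometry than the paper's proof of this proposition: it identifies the contact rulings as \(u_0=\beta\) and \(u_0=-2\alpha-\beta\) and shows the whole line lies in one tangent plane of the developable, which is essentially the content the paper derives separately in Propositions 4.6 and 4.7 (the inscribed faces tangent to \(\mathcal{A}\) along rulings); your intermediate formulas \(C'(u)=-6\sin 2u\,(\cos u,-\sin u,2)\), \(N(u_0)=(2\cos u_0,-2\sin u_0,-1)\), and \(D\cdot N(u_0)=2\cos(u_0+\alpha)-2\cos(\alpha+\beta)\), as well as the matching value \(2\cos 2\beta\), are all correct.
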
 
\begin{proof}\quad (1):  If \enskip \(\alpha + \beta = 0,  \)  \enskip then  \[z_1 = 2\cos\theta e^{i\alpha} +  2e^{-i\alpha}, \enskip z_2 = 4\cos\theta + 2\cos 2\alpha, \enskip 0 \le  \theta  < 2\pi.\]   
Hence from (2.8) we know that this is a ruling of \(\mathcal{A}\).  The same holds for  \(\alpha + \beta = \pi \).
 
(2):  If \enskip \(\alpha + \beta \ne 0,  \pi \)  \enskip then the four  terms of \(z_1\)  in (4.9)  are distinct except for the cases 
\[\theta = 0, \quad \theta = \pi, \quad \theta = \pm(\alpha-\beta) \enskip \mbox{and}\quad \theta  = \pm (3\alpha+\beta).\] 
Then the internal ray is not included in  \(\mathcal{A}\)  and touches the surface at two points \(\quad \theta = \pm(\alpha-\beta) \enskip \mbox{and}\quad \theta  = \pm (3\alpha+\beta).\)   
\end{proof}
\begin{cor}\label{R2}
  The rulings of the astroidalhedron  are internal rays.  
\end{cor}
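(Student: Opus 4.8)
The plan is to read the corollary as the converse of Proposition~\ref{pro:R1}(1). That proposition shows that an internal ray \(R_0(\alpha,\beta,\alpha)\) with \(\alpha+\beta=0\) (or \(\alpha+\beta=\pi\)) is a ruling of \(\mathcal{A}\); to prove the corollary it therefore suffices to verify the reverse inclusion, namely that \emph{every} ruling of \(\mathcal{A}\) occurs among these internal rays. By Proposition~\ref{pro:C} and formula (2.8), the rulings of \(\mathcal{A}\) are, in the \((p_1,p_2,q)\)-coordinates, precisely the lines
\[L_\alpha(s) = \bigl((2+s)\cos\alpha,\,(2-s)\sin\alpha,\,2s+2\cos 2\alpha\bigr), \qquad 0\le\alpha<2\pi,\]
one for each value of the slope parameter \(\alpha\) of the direction vector \((\cos\alpha,-\sin\alpha,2)\), where \(s=2\cos(\alpha+\beta)\).

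First I would specialize the internal ray formula (4.9) to the case \(\beta=-\alpha\). By Proposition~\ref{pro:R1}(1) this yields \(z_1=2\cos\theta\,e^{i\alpha}+2e^{-i\alpha}\) and \(z_2=4\cos\theta+2\cos 2\alpha\), so that, using \(p_1=\mathrm{Re}\,z_1\), \(p_2=\mathrm{Im}\,z_1\) and \(q=z_2\) from (2.7),
\[(p_1,p_2,q) = \bigl(2(1+\cos\theta)\cos\alpha,\, 2(\cos\theta-1)\sin\alpha,\, 4\cos\theta+2\cos 2\alpha\bigr).\]
Setting \(s:=2\cos\theta\) as the parameter along the line, this becomes \(\bigl((2+s)\cos\alpha,\,-(2-s)\sin\alpha,\,2s+2\cos 2\alpha\bigr)\), which is exactly \(L_{-\alpha}(s)\).

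Hence the internal ray \(R_0(\alpha,-\alpha,\alpha)\) coincides with the ruling \(L_{-\alpha}\); equivalently, a prescribed ruling \(L_\alpha\) is realized as the internal ray \(R_0(-\alpha,\alpha,-\alpha)\). As \(\alpha\) ranges over \([0,2\pi)\) this covers every ruling, which establishes the required surjectivity and completes the proof. The alternative choice \(\alpha+\beta=\pi\) reproduces the same family of rulings, so it contributes nothing new.

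The one place that needs attention is matching the parameter ranges. The internal ray sweeps out \(L_{-\alpha}(s)\) only for \(s=2\cos\theta\in[-2,2]\), whereas the ruling of \(\mathcal{A}\) carries the parameter \(v=2\cos(\alpha+\beta)\in(-2,2]\) coming from the constraint \(0\le\alpha+\beta<\pi\) in (2.8). Since these intervals agree except at the single endpoint \(s=-2\), the internal ray and the ruling trace out the same subset of \(\mathcal{A}\), so no portion of any ruling is omitted. I expect this range bookkeeping to be the only mild obstacle: once Proposition~\ref{pro:R1}(1) is granted, the algebraic identification of \(R_0(\alpha,-\alpha,\alpha)\) with the ruling \(L_{-\alpha}\) is immediate.
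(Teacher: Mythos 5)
Your proposal is correct and takes essentially the same route as the paper: the corollary is stated there without an explicit proof, as an immediate consequence of Proposition 4.4(1) together with the ruling parametrization (2.8), and your argument is exactly that implicit reasoning made explicit. The identification \(R_0(\alpha,-\alpha,\alpha)=L_{-\alpha}\), the surjectivity over all rulings, and the parameter-range check are precisely the details the paper leaves to the reader.
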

Next we study  'inscribed faces'  of \(\mathcal{A}\). Using the notations in Section 2,  we consider a face \(H\) in the natural domain \(R'\) in the space  \((\alpha, \beta, \gamma)\) defined by \quad
\(H : = \{\alpha = c\} \cap R'\), \enskip where \(c\) is a constant.   \(\varphi_1\)  is the map from \(R'\)  onto  \(K(f)\).  
\begin{pro} \label{pro:N1}
\(\varphi_1(H)\) is a face on the plane in the  \((p_1, p_2, q)\)  space  given by 
\[p_1 \cos c -  p_2 \sin c - q/2 = \cos 2c.\qquad \qquad \qquad \qquad\]
\end{pro}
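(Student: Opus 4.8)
The plan is to prove the identity by direct substitution into (2.7), holding $\alpha = c$ fixed and treating $(\beta,\gamma)$ as the two free coordinates parametrizing the face $H$. First I would write out the three coordinate functions explicitly at $\alpha = c$. From the first two lines of (2.7),
\[
p_1 = \cos c + \cos\beta + \cos\gamma + \cos(c+\beta+\gamma), \quad
p_2 = \sin c + \sin\beta + \sin\gamma - \sin(c+\beta+\gamma),
\]
and pairing the conjugate exponentials in the third line of (2.7) collapses $q$ to the real expression
\[
q = 2\cos(c+\beta) + 2\cos(c+\gamma) + 2\cos(\beta+\gamma).
\]
The reality of $q$ is consistent with $K(f)\subset R_3$, so this presentation of $q$ is automatic.

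Next I would form the linear combination $p_1\cos c - p_2\sin c$ and simplify it with the angle-addition formulas. The constant terms give $\cos^2 c - \sin^2 c = \cos 2c$; the $\beta$- and $\gamma$-terms pair off as $\cos c\cos\beta - \sin c\sin\beta = \cos(c+\beta)$ and, likewise, $\cos(c+\gamma)$; and the final pair, using the sign $-\sin(c+\beta+\gamma)$ in $p_2$, combines as $\cos c\cos(c+\beta+\gamma) + \sin c\sin(c+\beta+\gamma) = \cos(\beta+\gamma)$. This yields
\[
p_1\cos c - p_2\sin c = \cos 2c + \cos(c+\beta) + \cos(c+\gamma) + \cos(\beta+\gamma).
\]
Since $q/2$ is precisely the sum of the last three cosines, subtracting it cancels them and leaves $p_1\cos c - p_2\sin c - q/2 = \cos 2c$, the asserted plane equation, with all dependence on $(\beta,\gamma)$ gone.

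Finally I would observe that because the right-hand side is independent of the free parameters, every image point $\varphi_1(c,\beta,\gamma)$ with $(\beta,\gamma)$ ranging over $H$ lies on this one affine plane; and since $\varphi_1$ is a diffeomorphism on $\mathrm{int}(R')$ carrying $\partial R'$ onto $\partial K(f)$ (the remark following (2.7)), the image $\varphi_1(H)$ is a genuine two-dimensional planar region, i.e.\ a face of $\partial K(f)$. There is essentially no obstacle here: the argument is pure bookkeeping with the addition formulas. The only step that requires a moment's care is the sign of the $\sin(c+\beta+\gamma)$ term in $p_2$, which must combine with $-p_2\sin c$ to produce a \emph{plus} sign in front of $\cos(\beta+\gamma)$, exactly matching the corresponding term in $q/2$ so that the cancellation is clean.
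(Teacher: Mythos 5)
Your proof is correct and takes essentially the same approach as the paper, whose entire proof of this proposition reads ``By direct computations, we have this proposition.'' Your substitution of $\alpha = c$ into (2.7), the pairing of conjugate exponentials in $q$, and the angle-addition bookkeeping (including the sign of the $\sin(c+\beta+\gamma)$ term) all check out, so your write-up simply supplies the details the paper omits.
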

\begin{proof}
By direct computations, we have this proposition.
\end{proof}

We denote four vertices of the polyhedron \(\partial R'\) by \enskip \(O(0, 0, 0)\), \enskip \(B_1(\pi/2, \pi/2, \pi/2)\), \enskip \(B_2(-\pi, \pi, \pi)\) \enskip and\enskip  \(B_3(-\pi/2, -\pi/2, 3\pi/2)\).
We consider the triangle \(\triangle OB_2B_3\).  It lies on the plane \enskip \(2\alpha + \beta + \gamma = 0.\) \enskip Set  \(L : = H \cap \triangle OB_2B_3\).
\enskip The line segment \(L\) is given by 
\enskip \(\{(c, \beta, -2c-\beta)\}.\)  The image of \(L\) under the transformation \(T\)  is a line segment which is parallel to the root \(\alpha_3\).  The image of \(\triangle OB_2B_3\) under \(\varphi_1\)  is a part of the surface \(\mathcal{A}\).
\begin{figure}[htbp]
\begin{tabular}{cc}
\begin{minipage}{0.45\hsize}
\begin{center}
\includegraphics[scale=0.56]{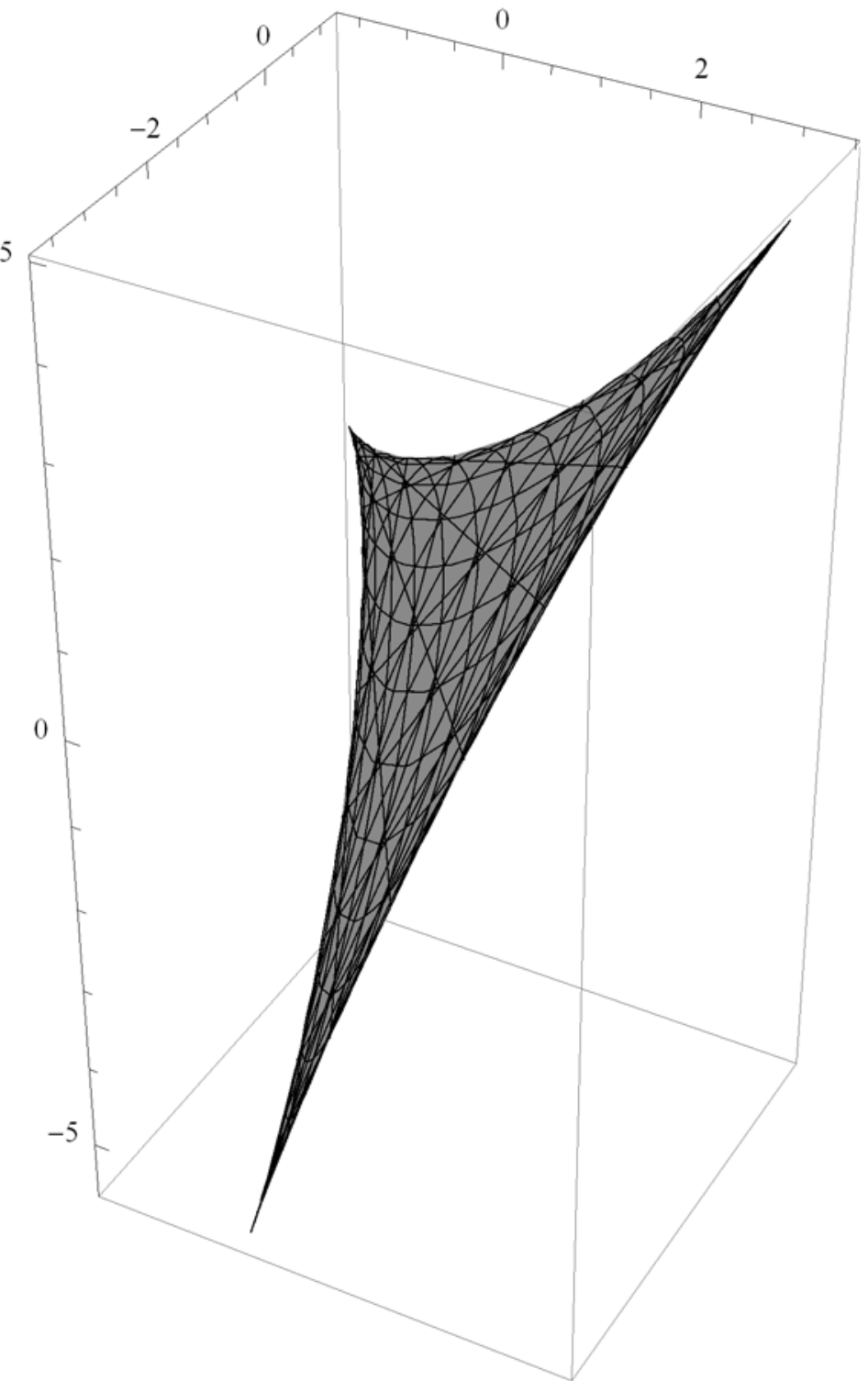}\\
\caption{A face  \(\varphi_1(H)\).}
\label{figure7.pdf}
\end{center}
\end{minipage}
\begin{minipage}{0.55\hsize}
\begin{center}
\includegraphics[scale=0.35]{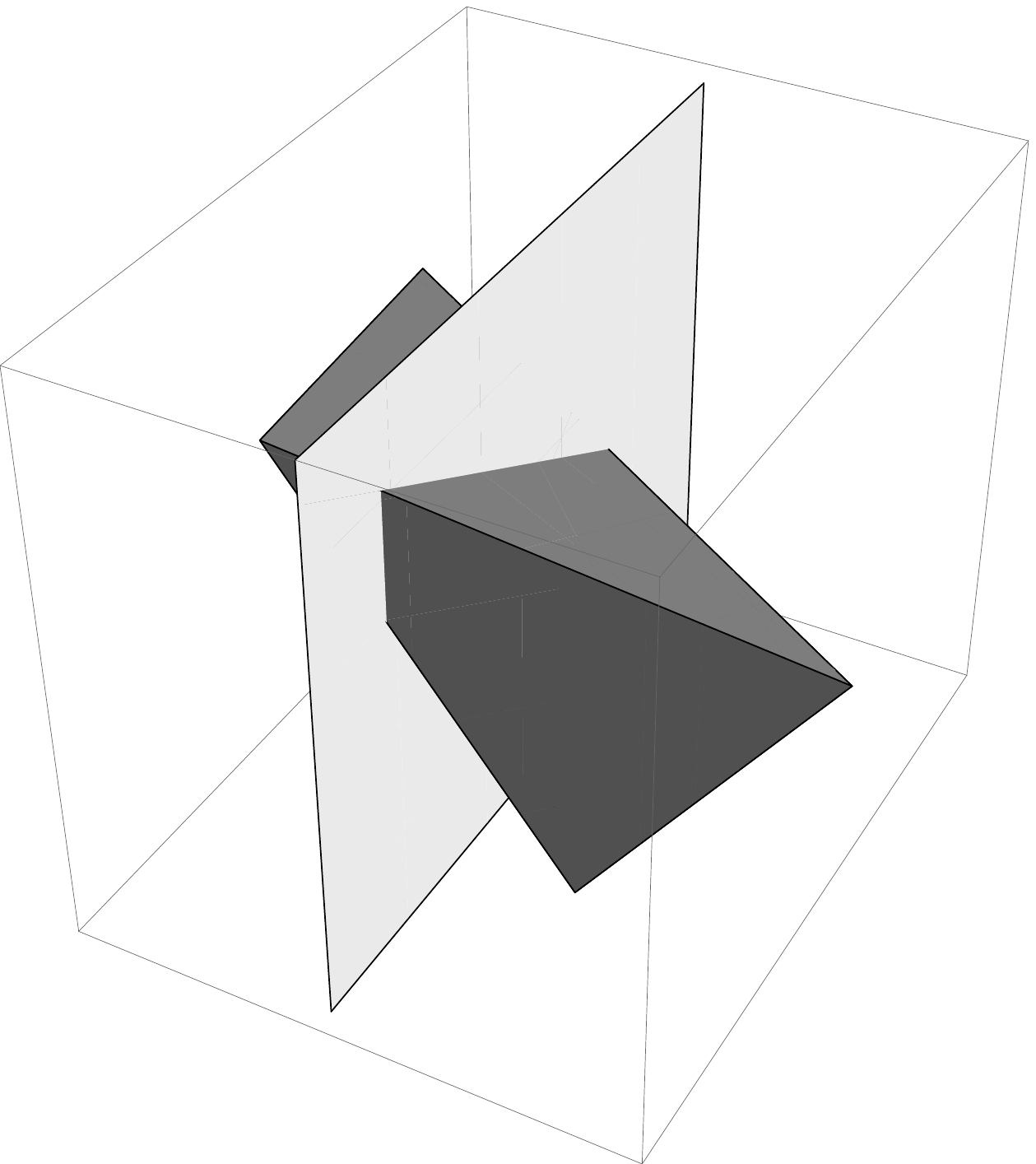}\\
\caption{A line segment $L$ and a face $H$.}
\label{figure8.pdf}
\end{center}
\end{minipage} 
\end{tabular}
\end{figure} 

\begin{pro} \label{pro:N2}
\(\varphi_1(L)\) is a ruling of \(\mathcal{A}\).  At any point of \(\varphi_1(L)\), the face \(\varphi_1(H)\) is tangent to \(\varphi_1(\triangle OB_2B_3)\).
\end{pro}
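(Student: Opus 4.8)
The plan is to reduce everything to the explicit ruled-surface parametrization of $\mathcal{A}$ produced in Proposition \ref{pro:C}, together with the plane equation of Proposition \ref{pro:N1}, and then to exploit the defining feature of a tangent developable: its tangent plane is constant along each ruling. First I would compute $\varphi_1(L)$ directly. On $L$ we have $\alpha = c$ and $\gamma = -2c-\beta$, so that $-\alpha-\beta-\gamma = \alpha$; hence $L$ lies in the face of $R'$ on which (2.8) was derived, and $\varphi_1(\triangle OB_2B_3)$ is exactly the portion of $\mathcal{A}$ covered by (2.8). Substituting $\alpha = c$ into (2.8) gives
\[(p_1, p_2, q) = 2(\cos c, \sin c, \cos 2c) + 2\cos(c+\beta)(\cos c, -\sin c, 2),\]
and as $\beta$ varies this traces the line through $2(\cos c, \sin c, \cos 2c)$ in the direction $(\cos c, -\sin c, 2)$. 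Comparing with $\tilde{\chi}(u,v)$ from the proof of Proposition \ref{pro:C}, this is precisely the ruling $u = c$, with $v = \cos(c+\beta)$. This already establishes the first assertion that $\varphi_1(L)$ is a ruling of $\mathcal{A}$.

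Next I would establish the tangency. Working with the parametrization $\chi(u,v)$ of Proposition \ref{pro:C}, a short differentiation gives
\[\partial_v\chi = (\cos u, -\sin u, 2), \qquad \partial_u\chi = -12\sin u\cos u\,\partial_v\chi + v(-\sin u, -\cos u, 0),\]
so the tangent plane of $\mathcal{A}$ along the ruling $u = c$ is spanned by $(\cos c, -\sin c, 2)$ and $(-\sin c, -\cos c, 0)$, independently of $v$; this $v$-independence is exactly the developability of $\mathcal{A}$. Taking the cross product, the normal is $(2\cos c, -2\sin c, -1)$, proportional to $(\cos c, -\sin c, -\tfrac12)$, which is precisely the coefficient vector of the plane $p_1\cos c - p_2\sin c - q/2 = \cos 2c$ carrying $\varphi_1(H)$ by Proposition \ref{pro:N1}. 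Finally I would verify incidence of the ruling: the base point $(4\cos^3 c, 4\sin^3 c, 6\cos 2c)$ satisfies
\[4\cos^4 c - 4\sin^4 c - 3\cos 2c = 4\cos 2c - 3\cos 2c = \cos 2c,\]
and the ruling direction $(\cos c, -\sin c, 2)$ is orthogonal to the normal $(\cos c, -\sin c, -\tfrac12)$. Hence $\varphi_1(H)$ contains $\varphi_1(L)$ and has the same normal as the tangent plane of $\varphi_1(\triangle OB_2B_3)$ at every point of $\varphi_1(L)$, which is the asserted tangency.

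The only delicate point is the second assertion, and its essence is conceptual rather than computational: one must recognize that because $\mathcal{A}$ is a tangent developable, its tangent plane does not vary as one moves along a ruling, so a single plane can be tangent to the surface simultaneously at every point of $\varphi_1(L)$. Once the $v$-independence of the spanning directions is observed, matching the resulting normal to the coefficient vector of Proposition \ref{pro:N1} and checking that the ruling lies in that plane are routine verifications.
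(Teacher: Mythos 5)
Your proposal is correct and follows essentially the same route as the paper: the first assertion is proved by the identical substitution $\alpha = c$, $\gamma = -2c-\beta$ into (2.8), and the tangency is proved by matching the normal $(\cos c, -\sin c, -\tfrac12)$ of the plane from Proposition \ref{pro:N1} against the tangent directions of the surface along the ruling. The only (cosmetic) difference is that you differentiate the striction-curve parametrization $\chi(u,v)$ of Proposition \ref{pro:C} and invoke developability to get $v$-independence of the tangent plane, whereas the paper differentiates the $(\alpha,\beta)$-parametrization of $\varphi_1(\triangle OB_2B_3)$ directly and checks orthogonality of $N$ to $\partial\chi/\partial\alpha$ and $\partial\chi/\partial\beta$ at $\alpha = c$.
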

\begin{proof}
Let \enskip \((p_1, p_2, q) : = \varphi_1(c, \beta, -2c-\beta).\)  Then as in the proof of (2.8), we have 
\[(p_1, p_2, q)  = 2(\cos c,  \sin c, \cos 2c) + 2\cos(\beta+c)(\cos c, -\sin c, 2).\]  Hence from (2.8), we see that \(\varphi_1(L)\) is a ruling of \(\mathcal{A}\).
 
Since \enskip \(\triangle OB_2B_3 = \{(\alpha, \beta, \gamma) \in R' : 2\alpha + \beta + \gamma = 0\}\),\quad then \enskip \(\varphi_1(\triangle OB_2B_3)\) \enskip is given by 
\[p_1(\alpha, \beta) = 2\cos \alpha +  2\cos (\alpha+ \beta)\cos \alpha,\quad
p_2(\alpha, \beta) = 2\sin \alpha - 2\sin \alpha \cos(\alpha+ \beta),\]
\(q(\alpha, \beta) =  2( \cos 2\alpha + 2\cos (\alpha + \beta)).\)\\
Set\quad \(\chi(\alpha, \beta) = (p_1(\alpha, \beta),  p_2(\alpha, \beta),  q(\alpha, \beta)). \) \quad Let \enskip \(N : = (\cos c,  -\sin c, -1/2)\)  be the normal to  \(\varphi_1(H)\) at \(\varphi_1(c, \beta, -2c-\beta)\).
We see that  the normal vector \(N\) is also orthogonal to the tangent vectors
\[ \frac{\partial \chi}{\partial \alpha} \enskip \mbox{and }\enskip \frac{\partial \chi}{\partial \beta} \enskip \mbox{at }\enskip \varphi_1(c, \beta, -2c-\beta).\qquad \qquad \qquad \qquad\]
\end{proof}
We describe the 'inscribed face'  \(\varphi_1(H)\)  in Proposition 4.6 in the words of internal rays.
Set \enskip \(D_0(\beta) = \cup_{\alpha}R_0(\alpha, \beta, \alpha)\) .  Then  we have the  following proposition.
 \begin{pro} \label{pro:R3}
\(D_0(\beta)\)  is equal to \enskip \(\varphi_1(\{ \beta = constant \}) \) .  
\end{pro}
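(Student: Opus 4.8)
The plan is to recognize each internal ray as a value of $\Phi_1$ on the unit torus and then to identify its sweep with the inscribed face by a single linear change of parameters. First I would substitute $r = e^{i\theta}$ into the external-ray formula (4.1) specialized to $\gamma = \alpha$. Since $\tfrac1r = e^{-i\theta}$, this gives
\[
R_0(\alpha,\beta,\alpha;\theta) = \Phi_1\!\left(e^{i(\alpha+\theta)},\, e^{i\beta},\, e^{i(\alpha-\theta)}\right),
\]
and a direct expansion of the elementary symmetric functions in (1.1) recovers exactly the two components in (4.9) (the $t_1t_3$–pair contributing $2\cos 2\alpha$ and the remaining pairs contributing $4\cos\theta\cos(\alpha+\beta)$). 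Because $|e^{i(\alpha\pm\theta)}| = |e^{i\beta}| = 1$, this point lies in $K(f)$ by Proposition 2.2, so each internal ray genuinely sweeps out a curve inside $K(f)\subset R_3$.

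Next I would set $(\alpha',\gamma') := (\alpha+\theta,\, \alpha-\theta)$. The map $(\alpha,\theta)\mapsto(\alpha',\gamma')$ is linear and invertible, with inverse $\alpha = \tfrac12(\alpha'+\gamma')$, $\theta = \tfrac12(\alpha'-\gamma')$, so sending $\theta$ through a full period inside each ray and then varying $\alpha$ is the same as letting $(\alpha',\gamma')$ range over the corresponding region of the $2$–torus. In these coordinates the internal-ray point becomes $\Phi_1(e^{i\alpha'},\, e^{i\beta},\, e^{i\gamma'})$, which is precisely $\varphi_1$ evaluated, in the $(p_1,p_2,q)$ coordinates of (2.7), at the point with natural-domain coordinates $(\alpha',\beta,\gamma')$. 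The key point that makes the labels match is that $\beta$ — the angle kept on the unit circle throughout the definition of the rays — is the very middle coordinate that is frozen in the slice $\{\beta = \mbox{constant}\}$, so no relabelling of coordinates is required.

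Combining these observations, $D_0(\beta) = \bigcup_\alpha R_0(\alpha,\beta,\alpha)$ equals the set $\{\Phi_1(e^{i\alpha'},\, e^{i\beta},\, e^{i\gamma'}) : \alpha',\gamma'\in\mathbb{R}\}$, and it remains to identify this with $\varphi_1(\{\beta=\mbox{constant}\}\cap R')$. The inclusion $\supseteq$ is immediate upon restricting $(\alpha',\gamma')$ to the slice of $R'$. For $\subseteq$ I would reduce an arbitrary triple $(\alpha',\beta,\gamma')$ back to the fundamental domain by a sequence of the reflections $J_k$, invoking Proposition 2.3 to guarantee that this reduction leaves the value of $\Phi_1$ unchanged, together with the surjectivity of $\varphi_1 : R' \to K(f)$. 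Since $\varphi_1$ is a diffeomorphism on the interior, the resulting equality of the two $2$-dimensional families finishes the proof.

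I expect the bookkeeping in this last step to be the only genuine obstacle: one must verify that, as $(\alpha,\theta)$ range over their admissible domains, the image $(\alpha',\gamma')$ covers exactly the slice $\{\beta=\mbox{constant}\}$ of $R'$ modulo the action of $\mathcal{G}$ — neither missing part of the inscribed face nor spilling onto an adjacent one. This is where Proposition 2.3 (the $J_k$ acting as $\Phi_1$-preserving permutations of $X$) together with the covering property of $\varphi_1$ does the work; the remaining manipulations are the same routine trigonometric identifications already carried out in the proof of Proposition 4.3.
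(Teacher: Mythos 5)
Your first two paragraphs reproduce the paper's proof exactly: substitute \(r=e^{i\theta}\) to see that \(R_0(\alpha,\beta,\alpha;\theta)=\Phi_1\bigl(e^{i(\alpha+\theta)},e^{i\beta},e^{i(\alpha-\theta)}\bigr)\), then pass to \((\alpha',\gamma')=(\alpha+\theta,\alpha-\theta)\); since this linear change of parameters is surjective onto the \(2\)-torus, \(D_0(\beta)=\{\Phi_1(e^{i\alpha'},e^{i\beta},e^{i\gamma'}): \alpha',\gamma'\in\mathbb{R}\}\). In the paper, \(\varphi_1(\{\beta=\mbox{constant}\})\) means precisely this set --- formula (2.7) evaluated on the whole plane \(\{\beta=\mbox{constant}\}\) --- and the paper's proof stops right there; no reduction to the natural domain \(R'\) is performed or needed.

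The genuine gap is in your last two paragraphs, where you insist on landing in \(\varphi_1(\{\beta=\mbox{constant}\}\cap R')\) and defer the ``bookkeeping'' of reducing triples to \(R'\) by the group \(\mathcal{G}\). Your key claim there --- that \(\beta\) sits in the frozen middle slot ``so no relabelling of coordinates is required'' --- is false: by Proposition 2.3 the reflections \(J_k\) act as permutations of the four-element set \(X\), and reduction to \(R'\) generically moves \(e^{i\beta}\) out of the middle position. Concretely, take \(\beta=0\), \(\alpha=\pi/2\), \(\theta=0\), so \((\alpha',\gamma')=(\pi/2,\pi/2)\). Then \(\Phi_1(i,1,i)=(2i,-2,-2i)\in D_0(0)\), its four angles are \(\{0,\pi/2,\pi/2,\pi\}\), and the unique representative in \(R'\) is \((\alpha,\beta,\gamma)=(0,\pi/2,\pi/2)\), whose middle coordinate is \(\pi/2\); checking the inequalities defining \(R'\) shows that \emph{no} point of \(\{\beta=0\}\cap R'\) maps to \((2i,-2,-2i)\). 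So under your strict reading the asserted identity is actually false, and the verification you postpone --- that the \(\mathcal{G}\)-orbit of the plane \(\{\beta=0\}\) meets \(R'\) exactly in the slice \(\{\beta=0\}\cap R'\) --- can never be carried out: the orbit does spill onto adjacent slices. The repair is not more bookkeeping but the correct reading of the statement: interpret \(\varphi_1(\{\beta=\mbox{constant}\})\) as the image of the full plane under (2.7), as the paper does, and then your change-of-variables computation already constitutes a complete proof.
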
 
\begin{proof}
If we regard \enskip\(\alpha + \theta\)\enskip as \(\alpha'\)   and   \(\alpha - \theta\) \enskip as \(\gamma'\) \enskip  in (4.9),  then we have \(z_1 = e^{i\alpha'} + e^{i\gamma'} +  e^{i\beta} + e^{-i(\alpha'+\beta+\gamma')}.\) \quad We fix \( \beta = constant \)   and move \(\alpha\)  and \(\theta\) .   Then we have   \(\varphi_1(\{ \beta = constant\}) = D_0(\beta)\) .
\end{proof}

Using external rays in \(R_3\) whose internal rays are of type (2)  in Proposition 4.4,  we construct a map \(E\) from  \(\mathcal{M}_0\)  to  \(\mathcal{A}_0\),
\[\mbox{where} \quad \mathcal{M}_0  = \{(e^{\theta i},\enskip xe^{\frac \theta 2 i }) :  0 \leq \theta < 2\pi, \enskip -2 < x < 2\}, \qquad\]
\[\mbox{and} \quad \mathcal{A}_0 = \{(4\cos^3 u , \enskip 4\sin^3 u,  \enskip 6\cos 2u) + v(\cos u , \enskip -\sin u ,\enskip 2 ) : 0 \leq u < 2\pi , \]
\[ -2-2\cos 2u < v  <  2-2\cos 2u\} .\qquad \qquad \qquad \qquad\]
The external ray  \(R(\alpha, \beta, \alpha)\) \enskip  with   \enskip   \(\alpha + \beta \ne 0,  \pi\) \enskip  has two end points.  One is in   \(\mathcal{M}_0\)  and the other is in   \(\mathcal{A}_0\).  Using these two end points, we define a map \(E\) from  \(\mathcal{M}_0\)  to  \(\mathcal{A}_0\) by 
\begin{equation}
\begin{split}
E(( e^{2i\alpha} : 2\cos(\alpha+\beta)e^{i\alpha} : 1 : 0))\qquad \qquad \qquad\\
= (2e^{i\alpha}  + e^{i\beta} + e^{i(-2\alpha-\beta)}, \quad 4\cos(\alpha+\beta) + 2\cos 2\alpha).
\end{split}
\end{equation}
 \begin{pro} \label{pro:R4}
The image of any ruling of \(\mathcal{M}_0\)  under the map \(E\) is a also a ruling of  \(\mathcal{A}_0\) . 
\end{pro}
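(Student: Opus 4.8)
The plan is to read off what a ruling of $\mathcal{M}_0$ is in the $(\alpha,\beta)$ parameters that define $E$, evaluate $E$ along such a ruling, and then identify the result with a ruling of $\mathcal{A}_0$ using the two parametrizations $\tilde{\chi}$ and $\chi$ of the astroidalhedron from the proof of Proposition \ref{pro:C}. First I would observe that the generic point of $\mathcal{M}_0$, written in the form $(e^{2i\alpha}:2\cos(\alpha+\beta)e^{i\alpha}:1:0)$ appearing in (4.10), is exactly the point $(e^{i\theta},xe^{i\theta/2})$ with $\theta=2\alpha$ and $x=2\cos(\alpha+\beta)$. A ruling of $\mathcal{M}_0$ is obtained by fixing $\theta$, hence fixing $\alpha$, and letting $x$ range over $(-2,2)$; equivalently, fix $\alpha$ and let $\beta$ vary so that $\cos(\alpha+\beta)$ runs through $(-1,1)$, which is precisely the range $\alpha+\beta\neq 0,\pi$ on which $E$ is defined through the type (2) external rays of Proposition \ref{pro:R1}.

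Next I would compute $E$ along this ruling. Writing the image in the coordinates $(p_1,p_2,q)$ of $R_3$ (so $z_1=p_1+ip_2$ and $z_2=q$), the defining formula (4.10) gives $z_1=2e^{i\alpha}+e^{i\beta}+e^{-i(2\alpha+\beta)}$ and $q=4\cos(\alpha+\beta)+2\cos2\alpha$. Applying the product-to-sum identities $2\cos(\alpha+\beta)\cos\alpha=\cos\beta+\cos(2\alpha+\beta)$ and $2\cos(\alpha+\beta)\sin\alpha=\sin(2\alpha+\beta)-\sin\beta$, the point $(p_1,p_2,q)$ collapses to
\[(p_1,p_2,q)=2(\cos\alpha,\sin\alpha,\cos2\alpha)+2\cos(\alpha+\beta)(\cos\alpha,-\sin\alpha,2),\]
which is exactly the right-hand side of (2.8) with $\alpha$ held fixed. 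Thus, for fixed $\alpha$, the image of the ruling lies on the single line $\{\tilde{\chi}(\alpha,t):t\in\mathbb{R}\}$, the tangent line to the astroid at parameter $u=\alpha$.

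Finally I would match the endpoints using the striction reparametrization of Proposition \ref{pro:C}. Since that reparametrization merely slides the base point along each ruling, the lines $\{\tilde{\chi}(\alpha,\cdot)\}$ and $\{\chi(\alpha,\cdot)\}$ coincide; an explicit comparison of base points yields $\tilde{\chi}(\alpha,t)=\chi(\alpha,2t-2\cos2\alpha)$. Hence, as $t=\cos(\alpha+\beta)$ ranges over $(-1,1)$, the parameter $v=2\cos(\alpha+\beta)-2\cos2\alpha$ sweeps exactly the open interval $(-2-2\cos2\alpha,\,2-2\cos2\alpha)$, i.e. the open ruling of $\mathcal{A}_0$ at $u=\alpha$. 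Therefore $E$ carries the ruling of $\mathcal{M}_0$ at $\theta=2\alpha$ bijectively onto the ruling of $\mathcal{A}_0$ at $u=\alpha$, which proves the proposition.

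The trigonometric simplifications are routine, so I do not expect them to be the difficulty. The one place requiring genuine care, and the only spot where an error could slip in, is the bookkeeping between the two parametrizations $\tilde{\chi}$ and $\chi$ of the astroidalhedron: the base point of $\tilde{\chi}(\alpha,\cdot)$ is $2(\cos\alpha,\sin\alpha,\cos2\alpha)$, which is \emph{not} the astroid point of $\chi$, so one must verify both that the two lines agree and that the endpoint ranges dictated by $-2<x<2$ correspond precisely to the open ruling bounds $-2-2\cos2\alpha<v<2-2\cos2\alpha$ of $\mathcal{A}_0$.
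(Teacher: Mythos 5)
Your proof is correct and takes essentially the same route as the paper's: fix $\alpha$ in (4.10), let $\beta$ vary, identify the image with the line given by (2.8), and pass to the tangent-developable parametrization via the striction-curve reparametrization of Proposition 2.4. The only difference is that you spell out what the paper leaves implicit, namely the product-to-sum reduction and the bookkeeping $\tilde{\chi}(\alpha,t)=\chi(\alpha,\,2t-2\cos 2\alpha)$ with the matching of the endpoint ranges $t\in(-1,1)$ and $v\in(-2-2\cos 2\alpha,\,2-2\cos 2\alpha)$.
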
 
\begin{proof}
In (4.10), we fix  \(\alpha\)   and  move \(\beta\).  Then by the same argument used in the proof of Proposition 2.4, we can prove that the image \((2e^{i\alpha}  + e^{i\beta} + e^{i(-2\alpha-\beta)}, \quad 4\cos(\alpha+\beta) + 2\cos 2\alpha)\) is written as (2.8).
\end{proof}
\section{The set of critical values and Catastrophe theory}
In this section we show some relations between \(P_{A_3}^d\)  and catastrophe theory. Before we start studying the relations,  we review some result on maps  \( P^d_{A_2} \) on \( {\mathbb C}^2 \) related to the Lie algebra of type \(A_2\).  We show in \cite{U} the following results.   The set of critical values of \( P^d_{A_2} \) restricted to 
 \(\{z_1 = {\bar z}_2\}\) is a deltoid.  The deltoid coincides with a cross-section of the bifurcation set (caustics) of the elliptic umbilic catastrophe map  \((D_4^-)\).  The external rays and their extensions  constitute a family  of lines whose envelope is the deltoid.  These lines are real 'rays' of caustics.   See Figure \ref{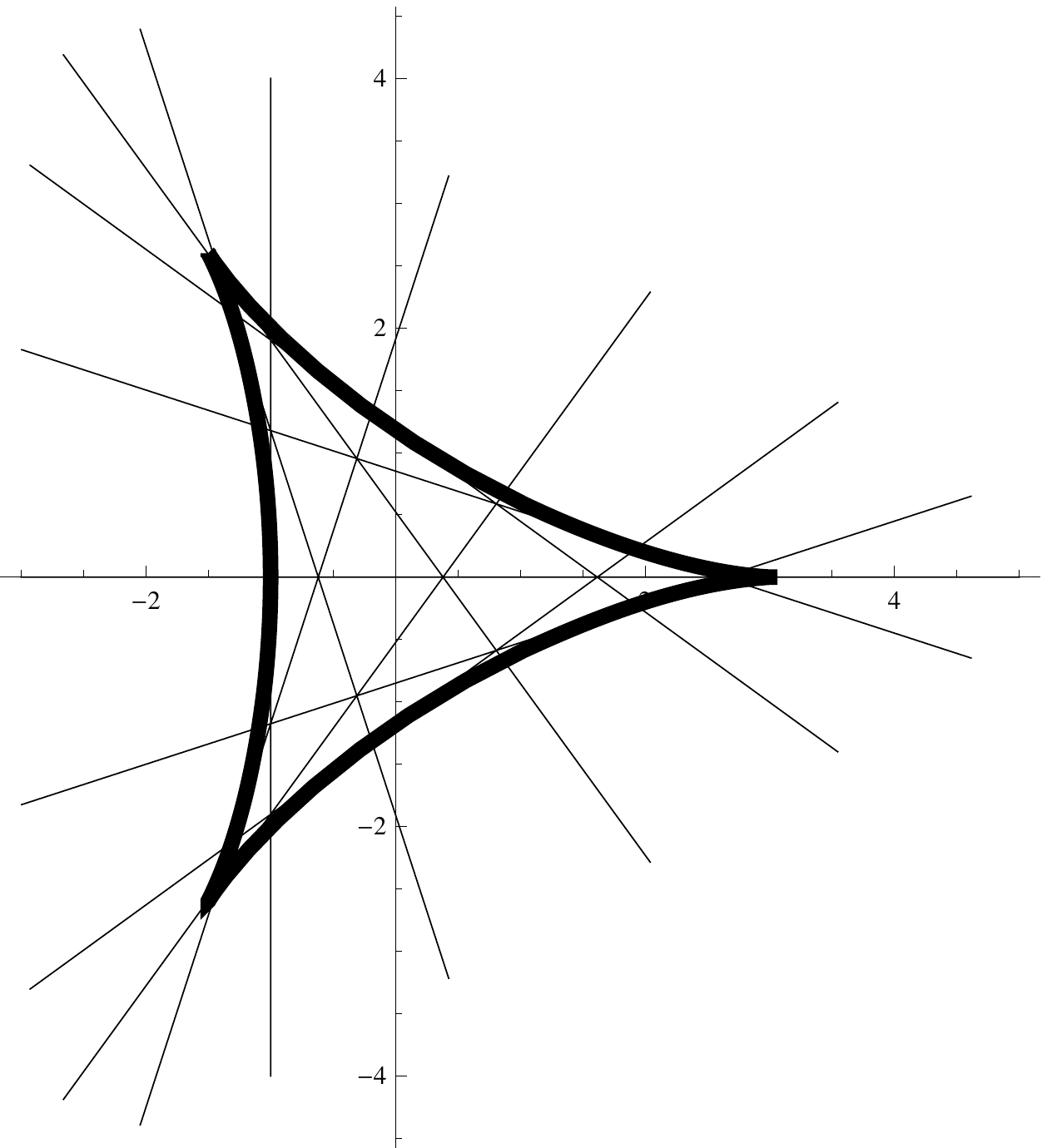}.
\begin{figure}
\includegraphics[scale=0.4]{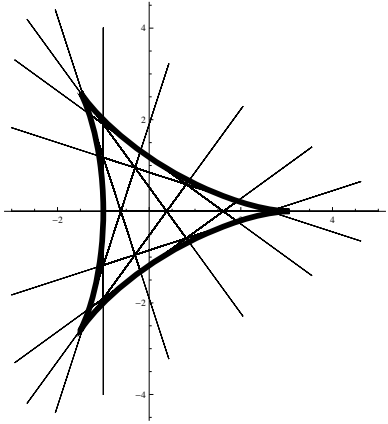}\hspace{1.5cm} \\
\caption{ A deltoid and external rays.}
\label{figure9.pdf}
\end{figure}
In addition to the caustics, the deltoid has relations with binary cubic forms
\[f(x,y) = ax^3 + bx^2y + cxy^2 + dy^3, \quad a,b,c,d \in {\mathbb R}.\]
The discriminant \(D\)  is given by 
\[D = 4(ac^3 + b^3d) +27a^2d^2 - b^2c^2 -18abcd .\]
\[\mbox{Set} \quad V =  \{( a, b, c, d) \in {\mathbb R}^4 : D( a, b, c, d) = 0\}.\]
Zeeman\cite{Z} shows that \(V \cap S^3\)  is mapped diffeomorphically to the 'umbilic bracelet'.  It has a deltoid section that rotates \(1/3\) twist going once round the bracelet.

Now we return to the study of the maps \(P_{A_3}^d\).
We will show that the set of critical values of \(P_{A_3}^d\) restricted to \(R_3\) decompose into the tangent developable of an astroid and two real curves. The set coincides with a cross-section of the 
set obtained by Poston and Stewart[9, 10] where binary quartic forms are degenerate.  The shape for the cross-section is called the 'Holy Grail'.

We begin with the study of the critical set of \(P_{A_3}^d\).  Let  \(t_4 = 1/(t_1t_2t_3)\).  We use the notation in (1.1).
 \begin{pro} \label{pro:T}   Critical set \(C_d\) of  \(P^d_{A_3}(z_1, z_2, z_3) \) is equal to

\[\{(z_1, z_2, z_3) \in  {\mathbb C^3} : t_1 = \varepsilon t_2 \enskip \mbox{or} \enskip t_1 = \varepsilon t_3 \enskip \mbox{or} \enskip t_1 = \varepsilon t_4 \enskip \mbox{or} \enskip \]
 \[ t_2 = \varepsilon t_3 \enskip \mbox{or} \enskip  t_2 = \varepsilon t_4\enskip \mbox{or} \enskip t_3 = \varepsilon t_4, \]
\[ \varepsilon = e^{2j\pi\sqrt{-1}/d} \quad (1 \le j \le d-1)\}.\]
\end{pro}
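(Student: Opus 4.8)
The plan is to compute the Jacobian determinant of $P_{A_3}^d$ by pulling everything back to the coordinates $(t_1,t_2,t_3)$ through the branched covering $\Phi_1$ and exploiting the commutative diagram (1.2). First I would record the algebraic meaning of $\Phi_1$: by (1.1) the triple $(z_1,z_2,z_3)$ consists of the first three elementary symmetric functions $e_1,e_2,e_3$ of the four quantities $t_1,t_2,t_3,t_4$ subject to $e_4 = t_1t_2t_3t_4 = 1$, so the $t_i$ are the four (necessarily nonzero) roots of $T^4 - z_1T^3 + z_2T^2 - z_3T + 1$. In particular $\Phi_1$ maps the torus $(\mathbb{C}\setminus\{0\})^3$ onto all of $\mathbb{C}^3$, so it suffices to describe, in $t$-coordinates, where the pulled-back Jacobian of $P_{A_3}^d$ vanishes.

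Next I would differentiate the relation $\Phi_1\circ m_d = P_{A_3}^d\circ\Phi_1$ coming from (1.2), where $m_d(t_1,t_2,t_3) = (t_1^d,t_2^d,t_3^d)$, and take determinants to obtain
\[
\det DP_{A_3}^d\big|_{\Phi_1(t)}\cdot J_{\Phi_1}(t) = J_{\Phi_1}(m_d(t))\cdot \det Dm_d\big|_t ,
\]
where $J_{\Phi_1} := \det D\Phi_1$ and $\det Dm_d|_t = d^3(t_1t_2t_3)^{d-1}$ is a unit on the torus. The heart of the argument is an exact formula for $J_{\Phi_1}$. I would obtain it by comparison with the full elementary-symmetric map $(t_1,t_2,t_3,t_4)\mapsto(e_1,e_2,e_3,e_4)$, whose Jacobian is the Vandermonde $\prod_{i<j}(t_i-t_j)$; eliminating $t_4$ via $t_4 = 1/(t_1t_2t_3)$ (so $\partial t_4/\partial t_j = -t_4/t_j$) and performing the column operations $C_j\mapsto C_j-(t_4/t_j)C_4$ clears the last row except for the entry $\partial e_4/\partial t_4 = t_1t_2t_3$, which yields
\[
J_{\Phi_1}(t) = \pm\frac{\prod_{1\le i<j\le 4}(t_i-t_j)}{t_1t_2t_3}.
\]
Applying the same formula at $m_d(t)$, whose four coordinates are the $t_i^d$, gives $J_{\Phi_1}(m_d(t)) = \pm\prod_{i<j}(t_i^d-t_j^d)/(t_1t_2t_3)^d$.

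Substituting these into the determinant identity and cancelling the common unit $1/(t_1t_2t_3)$ reduces everything to
\[
\det DP_{A_3}^d\big|_{\Phi_1(t)}\cdot\prod_{1\le i<j\le 4}(t_i-t_j) = \pm d^3\prod_{1\le i<j\le 4}(t_i^d-t_j^d).
\]
Finally I would factor $t_i^d-t_j^d = \prod_{k=0}^{d-1}(t_i-\zeta^k t_j)$ with $\zeta = \exp(2\pi\sqrt{-1}/d)$; the $k=0$ factors reproduce exactly $\prod_{i<j}(t_i-t_j)$, so that factor cancels from both sides. Since all terms are honest holomorphic functions on the torus and $\prod_{i<j}(t_i-t_j)$ is not identically zero, the identity theorem legitimizes the cancellation, leaving
\[
\det DP_{A_3}^d\big|_{\Phi_1(t)} = \pm d^3\prod_{1\le i<j\le 4}\ \prod_{k=1}^{d-1}(t_i-\zeta^k t_j).
\]
Reading off the zero set gives precisely $t_i = \varepsilon t_j$ with $\varepsilon = e^{2k\pi\sqrt{-1}/d}$, $1\le k\le d-1$, ranging over all six pairs, which is the asserted $C_d$.

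I expect the main obstacle to be the correct treatment of the diagonal $\{t_i = t_j\}$, which is the branch locus of $\Phi_1$ itself. There a naive pointwise chain rule only produces $0=0$, and one might worry that these collisions also lie in $C_d$. The exact Vandermonde formula for $J_{\Phi_1}$ is what resolves this: it exhibits the diagonal factors $(t_i-t_j)$ as a common factor that cancels identically, so the diagonal collisions are branch points of the covering $\Phi_1$ but \emph{not} critical points of $P_{A_3}^d$. Carrying out the cancellation globally via the identity theorem, rather than pointwise, and confirming that the prefactor $1/(t_1t_2t_3)$ is genuinely nonvanishing on the torus, are the two points that require care.
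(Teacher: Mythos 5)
Your proposal is correct and takes essentially the same route as the paper: the paper likewise computes $\det D\Phi_1 = t_4\prod_{1\le i<j\le 4}(t_i-t_j)$ and $\det D(P_{A_3}^d\circ\Phi_1) = d^3t_4\prod_{1\le i<j\le 4}(t_i^d-t_j^d)$ and divides, which is precisely your chain-rule identity applied to $P_{A_3}^d\circ\Phi_1 = \Phi_1\circ m_d$. The only difference is that you supply details the paper leaves implicit, namely the column-operation derivation of the Vandermonde-type formula for $\det D\Phi_1$ and the identity-theorem justification for cancelling the factors $(t_i-t_j)$ along the branch locus.
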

\begin{proof}  Recall the map  \( \Phi _1( t_1, t_2 ,t_3) =(z_1,z_2,z_3)\)
 .  Then 
\[\det D\Phi_1 = t_4\prod_{1 \le i < j \le 4}(t_i - t_j).\]
And
\[\det D(P_{A_3}^d\circ \Phi_1) = d^3t_4\prod_{1 \le i < j \le 4}(t_i^d - t_j^d).\]
The proposition follows because
\[\det DP_{A_3}^d = \det D(P_{A_3}^d\circ \Phi_1)/\det D \Phi_1 .\]
\end{proof}
  Clearly, the sets \(P_{A_3}^d(C_d)\)   \((d = 2, 3, 4 \cdot \cdot \cdot)\)   are the same.   The set  \(P_{A_3}^d(C_d)\) is an algebraic surface  in \( {\mathbb P^3}\) invariant  under \(P_{A_3}^d\), \\ i. e.,
\[P_{A_3}^d(P_{A_3}^d(C_d)) = P_{A_3}^d(C_d).\]
\(P_{A_3}^d \)  is a critically finite map.  See \cite{DS}.

We will determine the set \( P_{A_3}^d(C_d) \cap R_3\) .  We may set \(f : = P^2_{A_3}(z_1, z_2, z_3) \) \enskip and \enskip \(C : = C_2.\)   If \((z_1, z_2, z_3) \in C,\)  then without loss of generality we may assume that \(t_1 = -t_4,\) where \(t_4 =  1/(t_1t_2t_3).\)  Then 
\[z_1 =  t_2 + t_3 , \quad z_2 = t_2 t_3 +  \frac 1{t_2 t_3}, \quad z_3 =  \frac 1{t_2} + \frac 1{t_3} ,\]
and the image of \((z_1,z_2,z_3)\)  under \(f\) is written as 
\[z_1^{(2)} =  t_2^2 + t_3^2 - 2 \frac {1}{t_2t_3}, \qquad\]
\[z_2^{(2)} = t_2^2t_3^2 - 2(\frac {t_2}{t_3} + \frac {t_3}{t_2} ) + \frac 1{t_2^2 t_3^2} ,\]
\[z_3^{(d)}= \frac 1{t_2^2} + \frac1{t_3^2} - 2 t_2 t_3.\qquad \qquad \]
Set \(t_2 = re^{i\alpha}\) and \(t_3 = Re^{i\beta}\).  Then to determine the set
\(f(C) \cap R_3\)  we need the following. 
\begin{pro} \label{pro:V} The point \((z_1^{(2)}, z_2^{(1)}, z_3^{(2)})\)
belongs to the set \(R_3\) if and only if the following three conditions are satisfied :
\end{pro}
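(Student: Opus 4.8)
The plan is to unwind the definition of $R_3$ directly. By definition $R_3 = \{(w_1, w_2, w_3) : w_1 = \bar w_3,\ w_2 \in \mathbb{R}\}$, so $(z_1^{(2)}, z_2^{(1)}, z_3^{(2)}) \in R_3$ is equivalent to the two requirements $z_1^{(2)} = \overline{z_3^{(2)}}$ and $z_2^{(1)} \in \mathbb{R}$. Working throughout in the polar coordinates $t_2 = r e^{i\alpha}$, $t_3 = R e^{i\beta}$ fixed just before the statement, I would expand each requirement and read off the resulting scalar equations: the reality requirement contributes one, and the conjugacy requirement contributes two (its real and imaginary parts), giving three conditions in all.

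First I would dispose of the reality of the middle coordinate. Here it is essential that the middle entry is the degree-one quantity $z_2^{(1)} = z_2 = t_2 t_3 + 1/(t_2 t_3)$ rather than the degree-two image $z_2^{(2)}$: the former is a two-term expression, $z_2^{(1)} = rR\,e^{i(\alpha+\beta)} + \frac{1}{rR}\,e^{-i(\alpha+\beta)}$, whose imaginary part is simply $\left(rR - \frac{1}{rR}\right)\sin(\alpha+\beta)$. Setting this to zero gives the first of the three conditions.

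Next I would handle the conjugacy $z_1^{(2)} = \overline{z_3^{(2)}}$. The useful structural remark is that $z_3^{(2)}$ is obtained from $z_1^{(2)} = t_2^2 + t_3^2 - 2/(t_2 t_3)$ by the inversion $t_j \mapsto 1/t_j$, while complex conjugation (all coefficients being real) acts by $t_j \mapsto \bar t_j$; hence the conjugacy asserts exactly that $z_1^{(2)}$ is invariant under $(r, R) \mapsto (1/r, 1/R)$ at fixed angles. Substituting and subtracting yields the single complex equation
\[
\left(r^2 - \tfrac{1}{r^2}\right)e^{2i\alpha} + \left(R^2 - \tfrac{1}{R^2}\right)e^{2i\beta} + 2\left(rR - \tfrac{1}{rR}\right)e^{-i(\alpha+\beta)} = 0,
\]
and splitting it into real and imaginary parts gives the remaining two conditions; the imaginary part may be simplified using the first condition, since its last term is $-2\left(rR - \tfrac{1}{rR}\right)\sin(\alpha+\beta)$. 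Collecting the three scalar equations yields the three conditions of the statement.

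I do not anticipate a genuine obstacle, as everything reduces to a direct expansion once the polar substitution is installed. The only point needing care is the bookkeeping in separating the complex conjugacy equation into real and imaginary parts and verifying that the three resulting equations are jointly equivalent to — not merely necessary for — $R_3$-membership; the inversion symmetry linking $z_1^{(2)}$ and $z_3^{(2)}$ is what keeps this computation short and transparent.
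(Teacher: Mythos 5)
Your treatment of the conjugacy requirement is fine and in fact matches the paper: the paper's proof likewise reduces membership in $R_3$ to the two checks $z_1^{(2)}=\overline{z_3^{(2)}}$ and reality of the middle coordinate, and your complex equation $(r^2-\frac{1}{r^2})e^{2i\alpha}+(R^2-\frac{1}{R^2})e^{2i\beta}+2(rR-\frac{1}{rR})e^{-i(\alpha+\beta)}=0$ is the paper's equation up to conjugation and multiplication by the unit $e^{2i\alpha}$; after multiplying through by $r^2R^2$ its real and imaginary parts are exactly conditions (1) and (2) with $a=2\alpha+2\beta$, $b=\alpha-\beta$. The genuine error is in the reality requirement. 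The superscript in ``$z_2^{(1)}$'' is a typo in the statement: the point under discussion is the image of a critical point under $f=P_{A_3}^2$, the displayed formulas immediately before the proposition give its middle coordinate as $z_2^{(2)}=t_2^2t_3^2-2(t_2/t_3+t_3/t_2)+1/(t_2^2t_3^2)$, and the paper's proof explicitly checks ``$z_2^{(2)}\in\mathbb{R}$.'' Your emphasis that it is ``essential'' that the middle entry be the degree-one quantity $z_2^{(1)}$ is exactly backwards. Concretely, your reality equation $(rR-\frac{1}{rR})\sin(\alpha+\beta)=0$ appears nowhere among (1)--(3) and is not equivalent to (3): condition (3) is $r^2R^2$ times the imaginary part of $z_2^{(2)}$, namely $(r^2R^2-\frac{1}{r^2R^2})\sin a-2(\frac{r}{R}-\frac{R}{r})\sin b=0$, whose cross term $-2(r^3R-rR^3)\sin b$ arises precisely from the summand $-2(t_2/t_3+t_3/t_2)$ that your degree-one quantity lacks. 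You also assign your sine equation to ``the first of the three conditions,'' but (1) is a cosine equation coming from the conjugacy; matching your derived equations against the stated (1)--(3) would have flagged the mismatch immediately.

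The discrepancy is not cosmetic. On the lower whiskers (5.8), take $t_2=t_3=re^{i\pi/4}$ with $r\neq 1$, so $a=\pi$, $b=0$: conditions (1)--(3) all hold, and indeed $z_1^{(2)}=2i(r^2+r^{-2})=\overline{z_3^{(2)}}$ and $z_2^{(2)}=-r^4-r^{-4}-4\in\mathbb{R}$, so the image point lies in $R_3$; yet $z_2^{(1)}=t_2t_3+1/(t_2t_3)=i(r^2-r^{-2})$ is not real, so your system rejects this point. Thus your argument, read literally, proves a different proposition---one whose solution set omits the whiskers of $f(C)\cap R_3$, which are needed later for the identification with the Poston--Stewart set $\mathscr{Q}$ in Propositions 5.5--5.8. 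The repair is short: replace $z_2^{(1)}$ by $z_2^{(2)}$, set $\operatorname{Im}z_2^{(2)}=0$ to obtain (3), and keep your conjugacy computation for (1) and (2); that is precisely the paper's proof.
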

\begin{itemize}
\item [(1)] $(r^2R^4 - r^2)\cos {2b} + 2(r^3R^3 - rR)\cos {(a+b)} = R^2 - r^4R^2,$
\item  [(2)]$(r^2R^4 - r^2)\sin {2b} + 2(r^3R^3 - rR)\sin {(a+b)} = 0,$
\item  [(3)]$ (r^4R^4 - 1)\sin {a} - 2(r^3R - rR^3)\sin {b} = 0, \\
\mbox{ where} \quad a = 2\alpha+2\beta, \enskip b = \alpha - \beta.$
\end{itemize}

\begin{proof} \quad We may check the conditions 
\[z_1^{(2)} = \overline{z_3^{(2)}} \quad \mbox{and} \quad z_2^{(2)} \in  {\bf \mathbb R}.\]
The former condition is equivalent to
\[(r^2 - \frac 1{r^2}) + (R^2 - \frac 1{R^2})e^{2(\alpha - \beta)i} + 2 (rR - \frac 1{rR})e^{(3\alpha +  \beta)i} = 0.\]
The latter condition is equivalent to 
\[ r^2R^2 e^{2(\alpha + \beta)i} + \frac 1{r^2R^2} e^{-2(\alpha + \beta)i} -2(\frac rR e^{i(\alpha-\beta)} + \frac Rr e^{i(\beta-\alpha)}) \in  {\bf \mathbb R}.\]
Then the proposition follows.  
\end{proof}

Next we will show a refinement of  Proposition \ref{pro:V}.  We consider four cases :
\begin{enumerate}
\renewcommand{\labelenumi}{\roman{enumi})}
\item $r = R = 1,$ 
\item $rR = 1\quad \mbox{and} \quad r \ne R, $
\item $rR \ne 1 \quad \mbox{and} \quad r = R,$
\item $rR \ne 1 \quad \mbox{and} \quad r \ne R.$
\end{enumerate}
If \(r = R = 1\), then the conditions (1), (2) and (3) are trivially satisfied.
\begin{lemma} \label{lemma:l51}  We assume that the conditions (1), (2) and (3) in Proposition \ref{pro:V} are satisfied.
\begin{enumerate}
\renewcommand{\labelenumi}{\roman{enumi})}
\item $\mbox{If} \enskip rR = 1  \quad \mbox{and} \quad r \ne R, \quad  \mbox{then}\quad b =0,  \pi.$
\item $\mbox{If} \enskip rR \ne 1\quad \mbox{and} \quad r = R,\quad  \mbox{then} \quad (a,b) = (0, \pi), (\pi,0).$
\end{enumerate}
\end{lemma}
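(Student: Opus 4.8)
The plan is to substitute each of the two constraints directly into conditions (1)--(3) of Proposition \ref{pro:V} and exploit the algebraic structure of their coefficients; in both parts the strict inequalities $r \neq R$ and $rR \neq 1$ will be used only to guarantee that certain divisors are nonzero.

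For part (i), I would first observe that $rR = 1$ makes the cross-term coefficient vanish, since $r^3R^3 - rR = (rR)^3 - rR = 0$; thus the $\cos(a+b)$ and $\sin(a+b)$ terms drop out of conditions (1) and (2). Using $r^2R^2 = 1$ one computes $r^2R^4 - r^2 = R^2 - r^4R^2 = R^2 - R^{-2}$, a single common factor. Because $r \neq R$ together with $rR = 1$ forces $R^2 \neq 1$, this factor is nonzero, so condition (1) collapses to $\cos 2b = 1$, whence $b = 0$ or $b = \pi$ in the relevant range. I would then note that conditions (2) and (3) are automatically consistent with this value; indeed in condition (3) one has $r^4R^4 - 1 = 0$, and the remaining term $-2(r^2 - R^2)\sin b = 0$ again yields $\sin b = 0$.

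For part (ii), I would set $R = r$. Then both coefficients $r^2R^4 - r^2$ and $r^3R^3 - rR$ equal the common factor $r^2(r^4 - 1)$, while $R^2 - r^4R^2 = -r^2(r^4 - 1)$, and the coefficient $r^3R - rR^3$ in condition (3) vanishes identically. The hypothesis $rR = r^2 \neq 1$ guarantees $r^2(r^4 - 1) \neq 0$ and $r^8 - 1 \neq 0$. Dividing through, condition (3) becomes $\sin a = 0$, so $a = 0$ or $a = \pi$, while conditions (1) and (2) reduce to
\begin{equation*}
\cos 2b + 2\cos(a+b) = -1, \qquad \sin 2b + 2\sin(a+b) = 0.
\end{equation*}
I would then split into the cases $a = 0$ and $a = \pi$. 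The decisive step is that, once $a$ is fixed, the second equation factors as $2\sin b(\cos b + 1) = 0$ (for $a = 0$) or $2\sin b(\cos b - 1) = 0$ (for $a = \pi$), forcing $\sin b = 0$ and hence $b \in \{0, \pi\}$. Substituting these two candidates into the first equation then retains exactly one of them for each $a$: one finds $(a,b) = (0,\pi)$ when $a = 0$ and $(a,b) = (\pi,0)$ when $a = \pi$, which is the assertion.

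The computations are routine trigonometric simplifications; the only points requiring care are the justification that the divisors $R^2 - R^{-2}$, $r^2(r^4 - 1)$ and $r^8 - 1$ are nonzero, which is precisely where the strict inequalities $r \neq R$ and $rR \neq 1$ enter, together with the final selection step in part (ii) that discards the spurious root of the first equation. I expect no genuine difficulty beyond this bookkeeping.
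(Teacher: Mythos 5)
Your proof is correct and follows exactly the direct route the paper intends: the paper dismisses this lemma with ``The proof is straightforward,'' and your substitution of $rR=1$ (resp.\ $r=R$) into conditions (1)--(3), with the nonvanishing of the factors $R^2-R^{-2}$, $r^2(r^4-1)$, $r^8-1$ justified by $r\neq R$ (resp.\ $rR\neq 1$), is precisely that straightforward computation carried out in full. Your final selection step in part (ii), which discards $(0,0)$ and $(\pi,\pi)$ via condition (1), checks out: for $a=0$ one gets $\cos 2b+2\cos b=-1$ only at $b=\pi$, and for $a=\pi$ one gets $\cos 2b-2\cos b=-1$ only at $b=0$.
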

The proof is straightforward.
\begin{lemma} \label{lemma:l52}
We assume that \(rR \ne 1\quad \mbox{and} \quad r \ne R\).  Then there are not any numbers  \(0 < r,  R\quad \mbox{and} \quad 0 \le a, \enskip b < 2\pi\)  satisfying (1), (2), (3) in  Proposition \ref{pro:V}.
\end{lemma}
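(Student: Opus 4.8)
The plan is to bypass the trigonometric identities (1)--(3) of Proposition~\ref{pro:V} and instead reinterpret membership in $R_3$ as a symmetry of the four parameters underlying the image point. By the commutative diagram (1.2), the image under $f = P_{A_3}^2$ of our critical point is $\Phi_1(t_1^2, t_2^2, t_3^2)$, whose four defining parameters form the multiset $W = \{t_1^2, t_2^2, t_3^2, t_4^2\}$; on the component under consideration $t_1 = -t_4$, so $t_4^2 = t_1^2$ and $W$ carries a repeated entry. Writing $e_1, e_2, e_3, e_4$ for the elementary symmetric functions of $W$, the formulas (1.1) give $z_1^{(2)} = e_1$, $z_2^{(2)} = e_2$, $z_3^{(2)} = e_3$ and $e_4 = (t_1 t_2 t_3 t_4)^2 = 1$.

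First I would record the transformation law under the involution $\tau(t) = 1/\bar t$. Combining the reciprocal identity $e_k(\{1/w_i\}) = e_{4-k}(\{w_i\})/e_4(\{w_i\})$ with $e_4 = 1$ and complex conjugation, the elementary symmetric functions of $\tau(W) = \{1/\overline{t_i^2}\}$ are $\bar e_3, \bar e_2, \bar e_1, 1$. Hence the two real conditions $z_1^{(2)} = \overline{z_3^{(2)}}$ and $z_2^{(2)} \in \mathbb{R}$ cutting out $R_3$ read $e_1 = \bar e_3$ and $e_2 = \bar e_2$; together with the conjugate of the first these say that $W$ and $\tau(W)$ share all four symmetric functions, i.e.\ $W = \tau(W)$ as multisets. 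Thus the system (1)--(3) is \emph{equivalent} to $\tau$-invariance of $W$, and it suffices to show $W \ne \tau(W)$ whenever $rR \ne 1$ and $r \ne R$.

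The key step is then an elementary comparison of moduli. With $t_2 = r e^{i\alpha}$, $t_3 = R e^{i\beta}$ and $|t_1^2| = 1/(rR)$, the moduli occurring in $W$ form the multiset $\{1/(rR),\, 1/(rR),\, r^2,\, R^2\}$, while those occurring in $\tau(W)$ are the reciprocals $\{rR,\, rR,\, 1/r^2,\, 1/R^2\}$. If $W = \tau(W)$ these two multisets of positive reals must coincide, so the value $1/(rR)$ appearing on the left must also appear on the right. But $1/(rR) = rR$ forces $rR = 1$, whereas $1/(rR) = 1/r^2$ and $1/(rR) = 1/R^2$ each force $r = R$. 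Under the standing hypotheses $rR \ne 1$ and $r \ne R$ all three possibilities are excluded, so the moduli cannot agree, hence $W \ne \tau(W)$, and the lemma follows.

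I expect the sole genuine obstacle to be the reformulation in the second paragraph: one must verify that equations (1)--(3) encode exactly $e_j(W) = e_j(\tau(W))$, which rests on the law $e_k(\{1/\overline{w_i}\}) = \overline{e_{4-k}(\{w_i\})}$ together with $e_4 = 1$. Once this dictionary is in place the contradiction is a one-line statement about positive numbers, with no trigonometry. As a self-contained alternative, one can argue directly from (1)--(2): written in complex form these read $p + q e^{2bi} + s e^{(a+b)i} = 0$ with $p = r^2 - r^{-2}$, $q = R^2 - R^{-2}$, $s = 2(rR - (rR)^{-1})$, and isolating the last term gives $\cos 2b = (s^2 - p^2 - q^2)/(2pq)$, after which the degenerate cases $r = 1$ and $R = 1$ already force $rR = 1$ or $r = R$; but this route requires a longer case analysis that the symmetry argument sidesteps.
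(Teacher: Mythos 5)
Your proof is correct, and it takes a genuinely different route from the paper's. The paper attacks the system (1)--(3) by direct trigonometric elimination: from (3) it gets $\sin a = c_1\sin b$, squaring and adding (1) and (2) gives $\cos(a-b)=c_2$, from (2) it extracts $\cos a = c_3\cos b$, and substitution then yields a polynomial identity such as $(r-R)(r+R)(r^2R^2-1)^2/(1+r^2R^2)=0$, with separate treatment of the cases $\sin b\ne 0$, $\sin b=0$, and $R=1$. You instead use Proposition 5.2 to recast (1)--(3) as the geometric statement $\Phi_1(t_1^2,t_2^2,t_3^2)\in R_3$, and then exploit the self-inversive structure of the parameter set: since $e_4(W)=1$, the dictionary $e_k(\tau(W))=\overline{e_{4-k}(W)}$ for $\tau(t)=1/\bar t$ shows that $R_3$-membership of $(e_1,e_2,e_3)$ is precisely the multiset identity $W=\tau(W)$, which is then refuted by comparing the moduli multisets $\{1/(rR),1/(rR),r^2,R^2\}$ and $\{rR,rR,1/r^2,1/R^2\}$ — matching the entry $1/(rR)$ forces exactly $rR=1$ or $r=R$. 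I checked the two pivots of your argument: the symmetric-function transformation law (equivalently, that the monic quartic with root multiset $\tau(W)$ is $T^4-\overline{e_3}T^3+\overline{e_2}T^2-\overline{e_1}T+1$) is correct, and $|t_1^2|=|t_4^2|=1/(rR)$ follows from $t_1^2=-1/(t_2t_3)$, so there is no gap. What your approach buys: it is case-free (the degenerate situations $r=1$, $R=1$, $\sin b=0$ that the paper must handle separately never arise), it makes visible why the hypotheses $rR\ne1$ and $r\ne R$ are exactly the right ones, and it clearly generalizes to $P_{A_n}^d$ for arbitrary $n$, in the spirit of the paper's closing remark; the same mechanism also streamlines the parallel argument cited later in the proof of Lemma 5.7. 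What the paper's computation buys is self-containedness at the level of the real equations themselves, refuting (1)--(3) without appealing to the parametrization behind Proposition 5.2.
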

{\it Proof.} \quad Suppose that there exist numbers  \(0 < r,  R\quad \mbox{and} \quad 0 \le a, \enskip b < 2\pi\)  satisfying (1), (2), (3).   From (3) we have
\begin{equation} 
 \sin a = c_1 \sin b, \quad \mbox{where} \quad c_1 : = \frac{2(r^3R-rR^3)}{r^4R^4-1}.
\end{equation}
We square the both sides of (1) and (2).  Then we add left-hand sides and add right-hand sides.  Hence if   \(R \ne 1\),  then
\begin{equation}
\begin{split}
\cos(a-b) = \frac 1{2pq}(R^4(1-r^4)^2-p^2-q^2) = : c_2,\\ \mbox{where} \quad p = r^2R^4 - r^2 \quad \mbox{and} \quad q = 2(r^3R^3 -rR).
\end{split}
\end{equation}
(We denote the right hand side of (5.2) by \(c_2\).)  Applying the addition theorem to \(\cos(a-b)\)  and using (5.1),  we obtain
\begin{equation}
 \sin^2b = \frac{1-c_2^2}{1+c_1^2-2c_1c_2}.
\end{equation}
From (2) and (5.1), it follows that 
\[\cos a\sin b = c_3\cos b \sin b, \quad \mbox{where} \quad c_3 = \frac{-r(1+R^4)}{R(1+r^2R^2)}.\]
{ Case 1 :} \quad \(\sin b \ne 0.\)  Then
\begin{equation}
\cos a = c_3\cos b.
\end{equation}
Substituting  \(\sin a\) in (5.1)  and \(\cos a\) in (5.4)  for those  in (1)  and then substituting \(\sin^2 b\) in (5.3) for the result,   we have
\[\frac{(r-R)(r+R)(-1+r^2R^2)^2}{1+r^2R^2} = 0.\]
A contradiction.\\
{ Case 2 :} \quad \(\sin b = 0.\)  Then \(\sin a = 0\).
\[\mbox{If} \quad (a, b) = (0, 0) \quad \mbox{or} \quad (\pi, \pi), \quad \mbox{then} \quad (r+R)^2(r^2R^2-1) = 0.\]
\[\mbox{If} \quad (a, b) = (0, \pi) \quad \mbox{or} \quad (\pi, 0), \quad \mbox{then} \quad (r-R)^2(r^2R^2-1) = 0.\]
In any case, we have a contradiction.  

If  \(R = 1\),  we also have a contadiction. \qquad \(\Box\)\\

From  Lemma \ref{lemma:l52},  we know that  \(f(C) \cap R_3\)  decomposes into three cases:
\begin{enumerate}
\renewcommand{\labelenumi}{\roman{enumi})}
\item $r = R = 1,$ 
\item $rR = 1\quad \mbox{and} \quad r \ne R, $
\item $rR \ne 1 \quad \mbox{and} \quad r = R.$
\end{enumerate}
The first case : \(r = R = 1\).\\
The set \(\{(z_1^{(2)}, \quad z_2^{(2)}, \quad z_3^{(2)}) : r = R = 1\}\) is equal to the astroidalhedron \(\mathcal A\). 
This is a central part of the tangent developable in Figure 10. \\
\\
The second case : \(rR = 1\)  and  \(r \ne R\).   From Lemma \ref{lemma:l51}, it follows that  \(b = 0\) or \(\pi\).\\

If \(b = \pi\),\quad then \quad  \(\alpha - \beta = \pi \) and so \quad \(t_2 = re^{i \alpha }, \) \quad \(t_3 = -\frac 1{r}e^{i\alpha } \) . \\ 
Set \quad \(\theta  = -2\alpha . \)  Then we have a top bowl.  This is an upper part  of the tangent developable in Figure 10. \\
{\bf top bowl;}
\begin{equation}
\begin{split}
z_1^{(2)}=(r^2+\frac 1{r^2})e^{-i\theta} +2e^{i\theta},\enskip z_2^{(2)}=2(r^2+\frac 1{r^2}) + 2\cos{2\theta} ,\\
z_3^{(2)}=(r^2+\frac 1{r^2})e^{i\theta} +2e^{-i\theta}
\end{split}
\end{equation}\\
If \(b = 0\), \quad then \quad  \(\alpha - \beta = 0 \) and so \quad \(t_2 = re^{i \alpha }, \) \quad \(t_3 = \frac 1{r}e^{i\alpha }. \) \enskip Set \quad \(\theta  = -2\alpha . \)  Then we have a lower bowl.  This is a lower part  of the tangent developable in Figure 10.
 \\
{\bf lower bowl;}
\begin{equation}
\begin{split}
z_1^{(2)} = (r^2+\frac 1{r^2})e^{-i\theta} -2e^{i\theta},\enskip z_2^{(2)} = -2(r^2+\frac 1{r^2}) + 2\cos{2\theta} ,\\ 
z_3^{(2)} = (r^2 + \frac 1{r^2}) e^{i\theta} -2e^{-i\theta}.
\end{split}
\end{equation}\\
The third case : \(rR \ne 1\)  and  \(r = R\).  Then \enskip \((a,b) = (0, \pi)\)  or \((\pi, 0)\).\\
If \(a = 0\)  and  \(b = \pi\),\quad then \(t_2 = ir\), \quad \(t_3 = -ir\). Then we have top whiskers.  See Figure 10.\\
\\
{\bf top whiskers;}
\begin{equation}
z_1^{(2)} =-2(r^2+\frac 1{r^2}), \quad z_2^{(2)} =r^4 +\frac 1{r^4} + 4 , \quad
z_3^{(2)} = -2(r^2+\frac 1{r^2}).
\end{equation}
If \(a = \pi\)  and  \(b = 0\),\quad then \(t_2 = t_3 = re^{i\pi/4}\).  Then we have lower whiskers.  See Figure 10.\\
{\bf lower  whiskers;}
\begin{equation}
z_1^{(2)} =2i(r^2 +\frac 1{r^2}), \quad z_2^{(2)} =-r^4-\frac 1{r^4} - 4 , \quad
z_3^{(2)} = -2i(r^2 +\frac 1{r^2}).
\end{equation}
Hence 
\(f(C) \cap R_3\) \enskip decomposes into the astroidalhedron \(\mathcal{A}\), a top bowl, a lower bowl, top whiskers and lower whiskers.

Next we consider relations between\enskip \(f(C) \cap R_3 \)  \enskip and external rays.   
The half-lines (5.5) and (5.6)  with \enskip \(1 \le r \le \infty\) \enskip are external rays \enskip \(R(-\theta,  \theta, -\theta)\)  and \enskip \(R(-\theta,  \theta+ \pi, -\theta)\)  and land at points on the upper  and lower self-intersection lines, respectively.  By Propositions 2.4 and 4.4, we know that  adding an internal ray to the half-lines,  we have a tangent line to the astroid.\\
\begin{figure}
\includegraphics[scale=0.45]{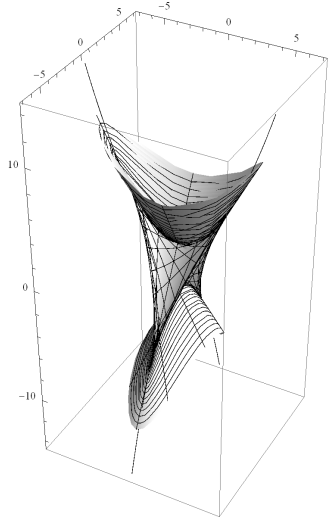}\hspace{1.5cm} \\
\caption{The tangent developable of an astroid in space and whiskers .}
\label{wiskers}
\end{figure}

Then we have the following proposition.
 \begin{pro} \label{pro:W}
\(f(C) \cap R_3 \setminus \{top \enskip and \enskip lower \enskip whiskers \}\) \enskip is the tangent  developable  \(\mathcal{T}\) of an astroid in space  given by 
\[\chi(u,v) = (4\cos^3 u , 4\sin^3 u,  6\cos 2u) + v(\cos u ,  -\sin u , 2 ) ,\qquad \qquad \]
\[ (-\infty < v  <  \infty ).\qquad \qquad \qquad \qquad\]
\end{pro}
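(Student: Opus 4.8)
The plan is to recognize each of the three surfaces that survive after the whiskers are deleted as a sub-band, in the parameter $v$, of the single ruled surface $\chi(u,v)$, and then to check that the three $v$-ranges tile the whole real line for each fixed $u$. By the case analysis following Lemmas \ref{lemma:l51} and \ref{lemma:l52}, the set $f(C)\cap R_3$ is the union of the astroidalhedron $\mathcal{A}$, the top bowl $(5.5)$, the lower bowl $(5.6)$, the top whiskers $(5.7)$ and the lower whiskers $(5.8)$. Deleting the two whisker curves leaves exactly $\mathcal{A}\cup(\text{top bowl})\cup(\text{lower bowl})$, so it suffices to realize these three pieces as the bands $-2-2\cos 2u\le v\le 2-2\cos 2u$, $\,v>2-2\cos 2u$, and $\,v<-2-2\cos 2u$ of $\chi$, respectively.

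First I would pass each piece to the $(p_1,p_2,q)$-coordinates of $R_3$ and abbreviate $s:=r^2+1/r^2$, so that $s\ge 2$ with equality only at $r=1$. The top bowl $(5.5)$ becomes $p_1=(s+2)\cos\theta$, $p_2=(2-s)\sin\theta$, $q=2s+2\cos 2\theta$. Setting $u=\theta$ and $v=s-2\cos 2\theta$ in $\chi(u,v)$ and using $4\cos^2\theta=2+2\cos 2\theta$ and $4\sin^2\theta=2-2\cos 2\theta$ reproduces these three coordinates identically; since the top bowl requires $r\ne 1$, i.e. $s>2$, its parameter obeys $v=s-2\cos 2u>2-2\cos 2u$. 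For the lower bowl $(5.6)$, namely $p_1=(s-2)\cos\theta$, $p_2=-(s+2)\sin\theta$, $q=-2s+2\cos 2\theta$, the same double-angle identities with the substitution $u=\theta+\pi$ and $v=-s-2\cos 2\theta$ give an identical match, and now $v=-s-2\cos 2u<-2-2\cos 2u$. The astroidalhedron $\mathcal{A}$ already fills the complementary closed band $-2-2\cos 2u\le v\le 2-2\cos 2u$ by Proposition \ref{pro:C}.

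Finally I would assemble the ranges. As $\theta$ runs over $[0,2\pi)$ both $u=\theta$ and $u=\theta+\pi$ sweep out every ruling, and for each fixed $u$ the intervals $[-2-2\cos 2u,\,2-2\cos 2u]$, $(2-2\cos 2u,\infty)$ and $(-\infty,-2-2\cos 2u)$ partition $(-\infty,\infty)$. Hence the union of the three pieces is precisely $\{\chi(u,v):0\le u<2\pi,\ -\infty<v<\infty\}$, which is the full tangent developable $\mathcal{T}$. This agrees with the ray description noted just before the statement: the top and lower bowls are swept out by the external rays $R(-\theta,\theta,-\theta)$ and $R(-\theta,\theta+\pi,-\theta)$, each of which, having $\alpha+\beta=0$ or $\pi$, carries an internal ray that is a ruling of $\mathcal{A}$ by Proposition \ref{pro:R1}(1), so that internal ray plus external ray recovers a complete tangent line to the astroid.

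I expect the only genuine obstacle to be the bookkeeping of signs and intervals: one must select the correct reparametrization for each bowl ($u=\theta$ for the top but $u=\theta+\pi$ for the lower) and then confirm that the resulting $v$-intervals are truly complementary and jointly exhaust the real line, so that every tangent line of $\mathcal{T}$ is covered and none is omitted or double-counted. The coordinate identifications themselves collapse to the double-angle identities once the substitutions are fixed, so they are routine.
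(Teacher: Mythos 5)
Your proposal is correct and takes essentially the same route as the paper: both arguments rest entirely on the decomposition of \(f(C)\cap R_3\) into \(\mathcal{A}\), the two bowls, and the two whiskers coming from Lemmas \ref{lemma:l51} and \ref{lemma:l52}, after which the proposition amounts to observing that the bowls fill out exactly the two complementary \(v\)-bands of the parametrization \(\chi(u,v)\) beyond the closed band occupied by \(\mathcal{A}\). The only cosmetic difference is that you verify this band-matching by an explicit double-angle computation (with the reparametrizations \(u=\theta\), \(v=s-2\cos 2u\) and \(u=\theta+\pi\), \(v=-s-2\cos 2u\)), whereas the paper reaches the same conclusion by citing its ray machinery (Propositions \ref{pro:C} and \ref{pro:R1}), identifying each tangent line of the astroid as two external rays joined by an intermediate internal ray.
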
 
The tangent developable \(\mathcal{T}\) consists of  \(\mathcal{A}\), the top bowl and the lower bowl.  Any ruling of \(\mathcal{T}\) i.e. any tangent line to the astroid consists of two external  rays  and an intermediate internal ray.
 \begin{pro} \label{pro:X}
(1)  The rims of the bowls  join to the boundary of the M\(\ddot{o}\)bius strip  \(\mathcal{M}\)  in \(\Pi\).

(2) The images of the two self-intersection lines under the map \(\varphi\)  from  \(K(f)\) to  \(R\) defined in Section 2  are  two edges of the longest length  of the 
 \enskip  \((\sqrt 3, \sqrt {3}, 2)\)-tetrahedron \(\partial R\). 
\end{pro}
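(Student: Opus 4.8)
The plan is to handle the two assertions separately, in each case reducing everything to the explicit parametrizations (5.5)--(5.9) together with the boundary-value formula for external rays from Section 4.

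For (1), I would start from the identification, recorded just before the proposition, of the top and lower bowls with the external-ray families \(R(-\theta,\theta,-\theta)\) and \(R(-\theta,\theta+\pi,-\theta)\) for \(1\le r\le\infty\). The rim of each bowl is its trace at \(r=\infty\), i.e. the locus of the landing points \(R(-\theta,\beta,-\theta;\infty)\) on the M\(\ddot{o}\)bius strip \(\mathcal{M}=J_{\Pi}\). Using \(R(\alpha,\beta,\gamma;\infty)=(e^{i(\alpha+\gamma)}:2\cos(\tfrac{\alpha+\gamma}2+\beta)e^{i(\alpha+\gamma)/2}:1:0)\) I would evaluate these limits. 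For the top bowl \((\alpha,\beta,\gamma)=(-\theta,\theta,-\theta)\) gives \(\tfrac{\alpha+\gamma}2+\beta=0\), hence the landing point \((e^{-2i\theta}:2e^{-i\theta}:1:0)\), which in the coordinates \(\mathcal{M}=\{(e^{i\psi}:xe^{i\psi/2}:1:0)\}\) is exactly \(x=2\); for the lower bowl \((\alpha,\beta,\gamma)=(-\theta,\theta+\pi,-\theta)\) gives \(\tfrac{\alpha+\gamma}2+\beta=\pi\), hence \(x=-2\). Since \(x=\pm2\) is precisely the boundary \(\partial\mathcal{M}\), and the half-twist of the strip glues the two families into the single boundary circle, the rims land on \(\partial\mathcal{M}\); tracking \(\theta\) over its full range shows the correspondence is a simple covering of \(\partial\mathcal{M}\), which is the content of (1).

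For (2), I would first pin down the two self-intersection lines as the \(r=1\) ends of the same two bowl families, namely (from (5.5), (5.6) with \(r^2+1/r^2=2\)) the upper line \((4\cos\theta,\,4+2\cos2\theta,\,4\cos\theta)\) and the lower line \((-4i\sin\theta,\,-4+2\cos2\theta,\,4i\sin\theta)\), both lying in \(K(f)\). The key is to rewrite each as \(\Phi_1\) of a unit-circle configuration: using the semiconjugacy \(t_j\mapsto t_j^{2}\) of diagram (1.2), the \(r=1\) pre-images with \(t_1=-t_4\) square to the four points \(\{e^{i\theta},e^{i\theta},e^{-i\theta},e^{-i\theta}\}\) for the upper line and \(\{e^{i(\theta+\pi)},e^{i(\theta+\pi)},e^{-i\theta},e^{-i\theta}\}\) for the lower line. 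I would then apply \(\varphi=T\circ\varphi_1^{-1}\): choosing the representative of each configuration inside the natural domain \(R'\) — namely \((-\theta,\theta,\theta)\) for the upper and \((-\theta,-\theta,\theta+\pi)\) for the lower — and feeding it into the matrix \(T\) of (2.6) yields \((s_1,s_2,s_3)=(\theta,0,\theta)\) and \((0,\sqrt2\,\theta,\pi)\) respectively. These trace the segments \(OA_2\) and \(A_1A_3\) of \(\partial R\); a short length check (\(|OA_2|=|A_1A_3|=\pi\sqrt2\) against \(\pi\sqrt{3/2}\) for the remaining four edges) identifies them as the two edges of longest length, proving (2).

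The routine parts are the trigonometric evaluations of the landing-point formula and the single multiplication by \(T\). The one genuinely delicate step — and the main obstacle — is the middle step of (2): selecting the correct representative of each four-point configuration inside the \emph{ordered} natural domain \(R'\). The four arguments are defined only modulo \(2\pi\) and must be relabelled so that \(\delta\le\alpha\le\beta\le\gamma\) with \(\delta=-\alpha-\beta-\gamma\) and \(\gamma\le2\pi-\alpha-\beta-\gamma\); a wrong lift (for instance the naive \((\theta,\theta,-\theta)\)) sends the configuration into the interior of \(R\) rather than onto an edge. Once the admissible representatives are fixed, Proposition \ref{pro:B2} guarantees they give the same \(\Phi_1\)-image, so \(\varphi\) is well defined on them, and the identification with \(OA_2\) and \(A_1A_3\) then follows by matching the range of \(\theta\) to the endpoints of these edges.
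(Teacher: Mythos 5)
Your proposal is correct and takes essentially the same approach as the paper: part (1) is the same $r\to\infty$ limit of the bowl parametrizations (5.5)--(5.6) (you phrase it through the landing-point formula $R(\alpha,\beta,\gamma;\infty)$, the paper computes the projective limit directly, giving the same points $x=\pm 2$ of $\partial\mathcal{M}$), and part (2) rests on the same identification of the self-intersection lines with the edges $OA_2$ and $A_1A_3$ via $\varphi$ together with the identical length comparison $\pi\sqrt{2}$ versus $\pi\sqrt{3/2}$. The only difference is that your explicit lift-to-$R'$-and-apply-$T$ computation spells out what the paper asserts by reference to Figures 2 and 4.
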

\begin{proof}\quad (1):  
The external rays in the top bowl and the lower bowl are given in (5.5) and (5.6).  Making  \enskip \(r \to \infty\) \enskip we see that 
\[\mbox{top bowl } : (z_1^{(2)} : z_2^{(2)} : z_3^{(2)} : 1) \to (e^{-i\theta} : 2 :  e^{i\theta} : 0)  \in \mathcal{M},\]
\[\mbox{lower  bowl } : (z_1^{(2)} : z_2^{(2)} : z_3^{(2)} : 1) \to (e^{-i\theta} : -2 :  e^{i\theta} : 0)  \in \mathcal{M}.\]

(2):  We denote four vertices of the \enskip \((\sqrt 3, \sqrt {3}, 2)\)-tetrahedron \(\partial R\)  by \enskip \(O = (0, 0, 0), \)\\
 \(A_1 = (0, -\pi/\sqrt 2, \pi), \quad A_2 = (\pi, 0, \pi) \) and  \(A_3 = (0, \pi/\sqrt 2, \pi).\)        See Figure 2.   The lengths of \(OA_2\) and \(A_1A_3\) are equal to  \(\sqrt 2 \pi\)  and the lengths of other edges are equal to  \(\sqrt 3 \pi /\sqrt 2 \).  The images of \(OA_2\) and  \(A_1A_3\) under the map  \(\varphi^{-1}\) are the upper self-intersection line and the lower self-intersection line, respectively.  See Figure 4. 
\end{proof}
Recall that \(J_3(f)\) is the closed domain bounded by \(\mathcal{A}\).   We have shown in Proposition 4.9 that the image of any ruling of  \(\mathcal{M}_0\)  under the map \(E\) is also a ruling of  \(\mathcal{A}_0\).  See Figures 11 and 12.
\begin{figure}[htbp]
\begin{tabular}{cc}
\begin{minipage}{0.52\hsize}\label{FigureCVA3}
\begin{center}
\includegraphics[scale=0.35]{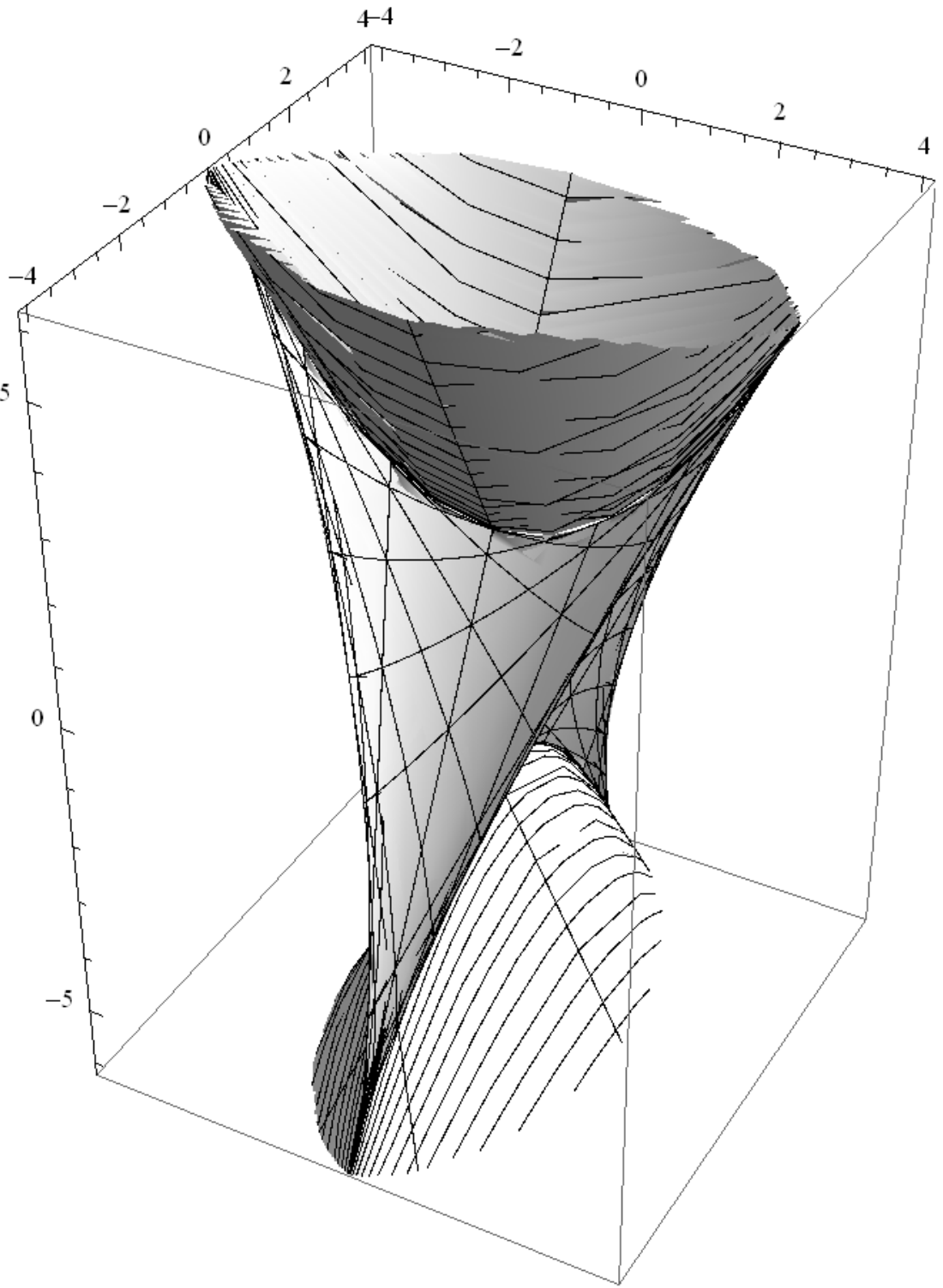} \\
\caption{ The tangent developable of an astroid in space.}
\end{center}
\end{minipage}
\begin{minipage}{0.48\hsize}\label{FigureMS}
\begin{center}
\includegraphics[scale=0.425]{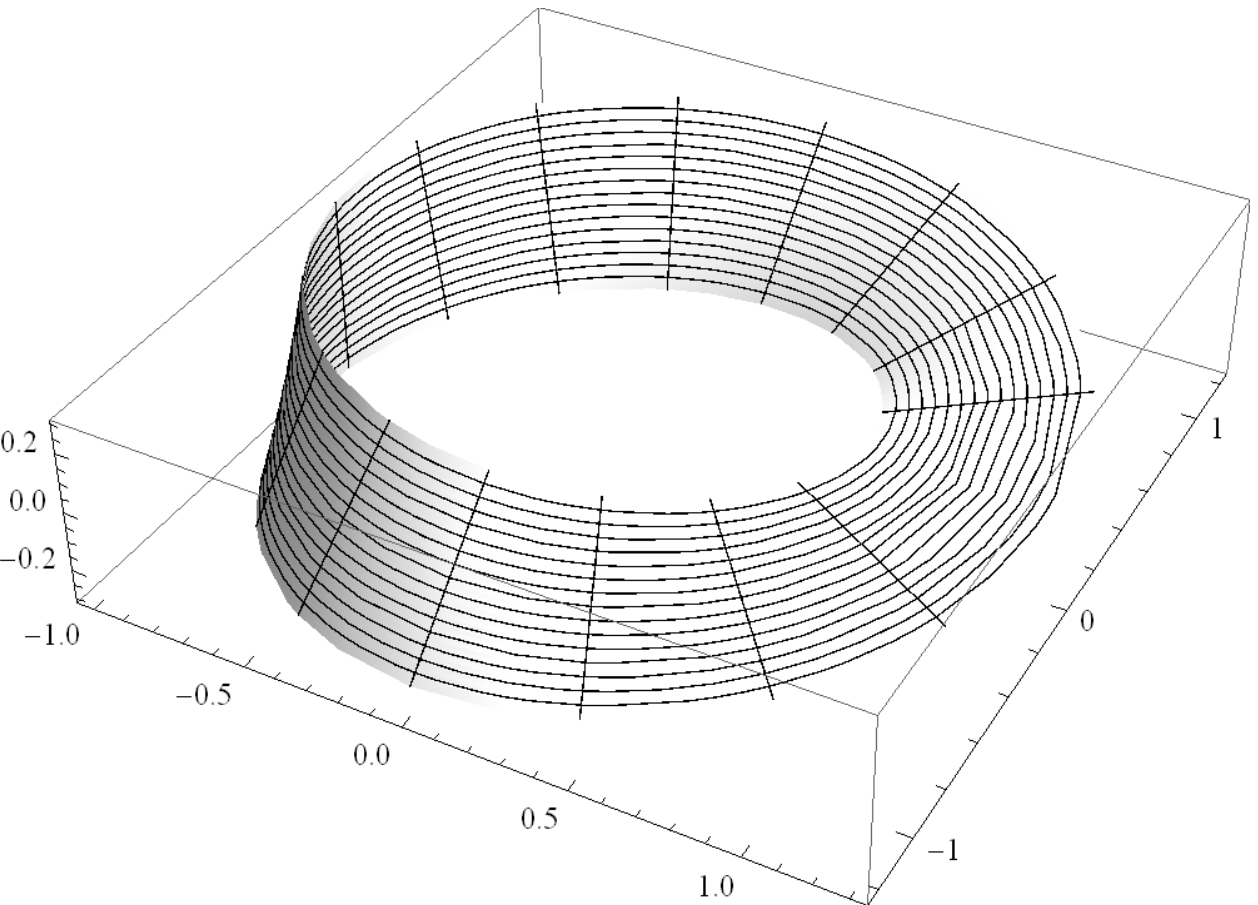} 
\caption{ A M\({\ddot o}\)bius strip.}
\end{center}
\end{minipage} 
\end{tabular}
\end{figure}

Lastly we consider relations between \enskip \(f(C) \cap R_3\) and binary quartic forms.
\quad Poston and Stewart  study  quartic forms in two variables in \cite{PS1} and \cite{PS2}
\[f(x,y) = ax^4 + 4bx^3y + 6cx^2y^2 + 4dxy^3 + ey^4, \quad a,b,c,d,e \in {\mathbb R}. \]
\(f(x,y)\)\enskip can be expressed uniquely as 
\begin{equation}
f(x,y) = Re(\alpha z^4 + \beta z^3\bar{ z} + \gamma z^2\bar{ z}^2), \quad \alpha, \beta \in {\mathbb C}, \quad \gamma \in {\mathbb R}.
\end{equation}
We use the results and notations in \cite{PS2}, pp.268-269.  
Let \(\triangle\) be the discriminant of \(f(x,y)\) and  \(\mathscr{Q} \subset {\mathbb R}^5\)     be the algebraic set given by \(\triangle = 0\).  To understand the geometry of \(\mathscr{Q}\)  they pursue a different tack.   The set \(\mathscr{W} = \mathscr{Q} \cap S^4\)  is decomposed  into \(\mathscr{W}_1\) and \(\mathscr{W}_{\infty}\).  \(\mathscr{W}_1\)  is diffeomorphic to  \(\mathscr{U}\). And  \(\mathscr{U}\) is the orbit of   \(\mathscr{Q}\)  under a maximal tours  \( {\mathbb T}\)  of \(GL_2( {\mathbb R})\).   \(\mathscr{Q}_0\) is the main part of   \(\mathscr{Q}\).  We consider the set \(\mathscr{Q}_0\). 
 Lemma 3.3 in \cite{PS1} states that  \(\mathscr{Q}_0\)  is given parametrically by 
\begin{equation}
 \beta = \frac 12(-3e^{i\phi} + e^{-3i\phi} -2\gamma e^{-i\phi}), \quad 0 \le \phi < 2\pi. 
\end{equation}
The shape for  \(\mathscr{Q}\)(or \(\mathscr{Q}_0\)) is called the Holy Grail in \cite{Ch}  and depicted in Fig. 5 in \cite{PS2}.   We compare the shape with Figure 11.  We show relations between  \(\mathscr{Q}_0\) and the tangent developable  \(\mathcal{T}\) in Proposition 5.5 of this paper. 
 \begin{lemma} 
The set \(\mathscr{Q}_0\) coincides  with \(\mathcal{T}\)  by a coordinate transformation.
\end{lemma}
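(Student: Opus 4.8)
The plan is to render the parametrization (5.10) of $\mathscr{Q}_0$ in explicit real coordinates and then produce an explicit linear change of coordinates carrying it onto the tangent developable $\mathcal{T}$ of Proposition~\ref{pro:W}. Writing $\beta=\beta_1+i\beta_2$ with $\beta_1=\mathrm{Re}\,\beta$, $\beta_2=\mathrm{Im}\,\beta$, and using $e^{i\phi}=\cos\phi+i\sin\phi$ together with the triple-angle identities $\cos 3\phi=4\cos^3\phi-3\cos\phi$ and $\sin 3\phi=3\sin\phi-4\sin^3\phi$, I would first reduce (5.10) to
\[
\beta_1=2\cos^3\phi-(3+\gamma)\cos\phi,\qquad \beta_2=2\sin^3\phi+(\gamma-3)\sin\phi,
\]
the third ambient coordinate being $\gamma$ itself. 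In these coordinates $\mathscr{Q}_0$ is visibly a ruled surface, swept (as $\gamma$ varies at fixed $\phi$) along the direction $(-\cos\phi,\sin\phi,1)$.

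I would then take the coordinate transformation $(p_1,p_2,q):=(-\beta_1,-\beta_2,2\gamma)$ --- a rotation by $\pi$ in the $(p_1,p_2)$-plane followed by a doubling of the third axis --- and check that it sends $\mathscr{Q}_0$ onto $\mathcal{T}$. Concretely, setting $u=\phi$ and reparametrizing the ruling by $v:=\gamma-3\cos 2\phi$, the substitution $\cos 2\phi=2\cos^2\phi-1=1-2\sin^2\phi$ collapses the three scalar equations to the single vector identity
\[
(-\beta_1,-\beta_2,2\gamma)=\bigl(4\cos^3u+v\cos u,\;4\sin^3u-v\sin u,\;6\cos 2u+2v\bigr)=\chi(u,v),
\]
which is exactly the parametrization of $\mathcal{T}$ in Proposition~\ref{pro:W}. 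Each coordinate reduces to an elementary trigonometric identity, so this verification is routine once the map and the reparametrization are in hand.

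The one delicate point --- and where I expect the real work to lie --- is discovering the correct ruling reparametrization. Matching coefficients of $\cos^3\phi$ naively suggests a uniform rescaling (the tempting $4=2c$), but this is incompatible between the $p_1$ and $p_2$ equations and cannot succeed; the resolution is that $v$ must carry the $\phi$-dependent shift $v=\gamma-3\cos 2\phi$, whose hidden $-6\cos^2\phi$ term corrects the $2\cos^3\phi$ of $\beta_1$ into the $4\cos^3\phi$ of $\chi$. The cleanest way to locate this shift is structural: I would compute the edge of regression of $\mathscr{Q}_0$ by imposing that the tangent vectors $\partial_\phi$ and $\partial_\gamma$ be parallel, which forces $\gamma=3\cos 2\phi$ and collapses $\mathscr{Q}_0$ to the space astroid $(-4\cos^3\phi,-4\sin^3\phi,3\cos 2\phi)$, with the rulings then seen to be its tangent lines (so $\mathscr{Q}_0$ is genuinely a tangent developable). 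Since $(p_1,p_2,q)=(-\beta_1,-\beta_2,2\gamma)$ carries this astroid onto the edge of regression $(4\cos^3u,4\sin^3u,6\cos 2u)$ of $\mathcal{T}$, and a linear map sends tangent lines to tangent lines, the two tangent developables must coincide --- which also explains a posteriori the shift by $3\cos 2\phi$.
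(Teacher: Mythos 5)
Your proof is correct --- I checked the reduction of (5.10) to $\beta_1=2\cos^3\phi-(3+\gamma)\cos\phi$, $\beta_2=2\sin^3\phi+(\gamma-3)\sin\phi$, the identity $(-\beta_1,-\beta_2,2\gamma)=\chi(\phi,\gamma-3\cos 2\phi)$, and the computation that the singular locus of the parametrization is exactly $\gamma=3\cos 2\phi$, all of which hold --- but it proceeds by a genuinely different route from the paper. The paper does not manipulate the parametrization (5.10) at all: it goes back to the \emph{defining condition} behind (5.10), namely that the quartic $e^{4i\theta}+e^{-4i\theta}+\beta e^{2i\theta}+\bar\beta e^{-2i\theta}+2\gamma=0$ have a double root, rewrites this as the monic reciprocal quartic $T^4-z_1T^3+z_2T^2-z_3T+1=0$ under $\beta=-z_1$, $2\gamma=z_2$, and observes that a double root $t_1=t_2=e^{i\theta}$ with $(z_1,z_2,z_3)\in R_3$ is precisely the coincidence pattern of the $t_j$'s analyzed earlier: the case $r=1$ gives the astroidalhedron $\mathcal{A}$, and $r\neq 1$ forces $\phi+\theta=0$ or $\pi$ (by the argument of Lemma 5.3), giving the top and lower bowls of (5.5)--(5.6); Proposition 5.5 then assembles these into $\mathcal{T}$. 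So the paper's proof re-uses the classification of $f(C)\cap R_3$ and explains conceptually why degenerate quartics correspond to the critical-value set, whereas yours is a self-contained parametric verification whose structural ingredient is the edge of regression (striction curve) of the ruled surface (5.10). Notably, the paper itself anticipates your argument: immediately after its proof it remarks that the lemma ``can also be proved by reparametrizing the ruled surface given by (5.10) using a striction curve,'' which is exactly what you carried out. What your route buys is independence from Lemmas 5.1--5.4 and an explicit, checkable formula for the change of coordinates and ruling shift; what the paper's route buys is the identification of $\mathscr{Q}_0$ with the quartic's double-root locus inside $R_3$, which is the conceptual bridge between catastrophe theory and the dynamics that the section is driving at.
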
 
\begin{proof}
As in the proof of Lemma 3.3 in \cite{PS1},  we put \enskip \(\alpha = 1\)  and \enskip \(z = e^{i\theta}\) in the right-hand side of (5.9).  That is, we consider the equation 
\begin{equation}
 e^{4i\theta}  + e^{-4i\theta} + \beta e^{2i\theta} + {\bar{\beta}} e^{-2i\theta} +2 \gamma = 0.
\end{equation}
The equation  (5.10) is obtained by the considering condition that (5.11) has a double root in \(\theta\).  
We will find the same condition in our situation.   From (5.11), we have
\begin{equation}
 (e^{2i\theta})^4   + \beta (e^{2i\theta})^3 + 2\gamma (e^{2i\theta})^2 + {\bar{\beta}} e^{2i\theta} + 1 = 0.
\end{equation}
Hence we consider the equation
\begin{equation}
T^4 - z_1T^3 + z_2T^2 - z_3T +1 = 0.
\end {equation}
Let the solutions of (5.13) be \(t_1, t_2, t_3\) and \(t_4\).
Then the condition that (5.11) has a double root in \(\theta\) is described as follows.  From (5.12),  we assume that \enskip \(z_1 = {\bar z}_3\) \enskip and \(z_2\) is real.  That is, \enskip \((z_1, z_2, z_3) \in R_3\).  Under this assumption, (5.13) has a solution \(\{t_1, t_2, t_3, t_4\}\)  such that \(t_1 = t_2 = e^{i\theta}\).   Set \enskip \(t_3 = re^{i\phi}\).  Then \(t_4 = (1/r)e^{-i(2\theta + \phi)}\).   Relations between \(t_j\)'s and \(z_j\)'s are given in (1.1) with  \(t_4 = 1/(t_1 t_2 t_3)\).  Then we can express the condition  that such an element  \((z_1, z_2, z_3) \)  lies in \(R_3\) in the terms of the variables \(r, \phi\) and \(\theta\).
If \enskip \(r = 1\), then \enskip \((z_1, z_2, z_3) \in  \mathcal A\).  Next we assume that  \enskip \(r \ne 1\).  Then by an argument similar to that used in the proof of lemma \ref{lemma:l51} i), we see that if  such an element  \((z_1, z_2, z_3) \)  lies in \(R_3\) \enskip then \enskip \( \phi + \theta = 0\) \enskip or \enskip \( \phi + \theta = \pi\).    If \enskip \( \phi + \theta = 0\), \enskip then \((z_1, z_2, z_3)\) \enskip belongs to the top bowl in (5.5).  If \enskip \( \phi + \theta = \pi\), \enskip then \((z_1, z_2, z_3)\) \enskip belongs to the lower bowl in (5.6).   The coordinate transformation is given by \enskip \(\beta = -z_1\) \enskip and \enskip \(2\gamma = z_2\). 
\end{proof}
We can also prove this lemma by reparametrizing the ruled surface given by  (5.10) using a striction curve.

The set  \(\mathscr{Q} \setminus \mathscr{Q}_0\)  constitutes of two whiskers in \cite{PS2}.   
We can show that the whiskers in \cite{PS2}  coincide with the whiskers in (5.7) and (5.8) by the above coordinate transformation.  Each whisker in this paper joins to an attracting fixed point \enskip \(P_2 = (0 : 1 : 0 : 0)\)  of  \(f\).

\begin{pro} \label{pro:Z2} The set \(\mathscr{Q}\) coincides with \(f(C) \cap R_3\)  by a coordinate transformation. 
\end{pro}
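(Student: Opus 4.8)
The plan is to prove the identity by decomposing each side into a main part and two whiskers and checking that a single coordinate transformation carries one decomposition onto the other. On the Poston--Stewart side we write $\mathscr{Q}=\mathscr{Q}_0\cup(\mathscr{Q}\setminus\mathscr{Q}_0)$, where $\mathscr{Q}_0$ is the main part parametrised by (5.10) and $\mathscr{Q}\setminus\mathscr{Q}_0$ is the union of the two whiskers of \cite{PS2}. On the dynamical side, the analysis preceding Proposition \ref{pro:W} exhibits $f(C)\cap R_3$ as the union of the astroidalhedron $\mathcal{A}$, the top bowl (5.5), the lower bowl (5.6), the top whiskers (5.7) and the lower whiskers (5.8), the first three pieces assembling into the tangent developable $\mathcal{T}$ of Proposition \ref{pro:W}. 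Thus both sets have the shape of a main surface together with two one-dimensional whiskers, and it suffices to realise the correspondence of these pieces by one map.

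First I would invoke Lemma 5.7, which already identifies $\mathscr{Q}_0$ with $\mathcal{T}$ under the coordinate transformation $\beta=-z_1$, $2\gamma=z_2$. There the three sub-cases $r=1$, $\phi+\theta=0$, $\phi+\theta=\pi$ of the double-root locus of (5.11) are sent respectively to $\mathcal{A}$, the top bowl and the lower bowl. This matches the two main parts and, crucially, pins down the transformation once and for all.

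It then remains to match the whiskers under this same transformation. On our side the curves (5.7) and (5.8) arise, through the isolation carried out in Lemma \ref{lemma:l51}(ii), from the degenerate configurations $r=R$, $rR\neq 1$ with $(a,b)=(0,\pi)$ and $(\pi,0)$. On the Poston--Stewart side $\mathscr{Q}\setminus\mathscr{Q}_0$ is cut out by the analysis of $\mathscr{W}_\infty$. Substituting $\alpha=1$, $z=e^{i\theta}$ and applying $\beta=-z_1$, $2\gamma=z_2$ as in Lemma 5.7, I would check that these two families of real curves coincide. Combining the two steps, the single transformation $\beta=-z_1$, $2\gamma=z_2$ carries $\mathscr{Q}=\mathscr{Q}_0\cup(\mathscr{Q}\setminus\mathscr{Q}_0)$ onto $\mathcal{T}\cup\{\text{top whiskers}\}\cup\{\text{lower whiskers}\}=f(C)\cap R_3$, which is the assertion. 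A useful consistency check is that each whisker runs out to the attracting fixed point $P_2=(0:1:0:0)$ of $f$, matching the limiting behaviour of the whiskers in \cite{PS2}.

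The main obstacle is precisely this whisker matching: one must confirm that the transformation forced by the main-part identification also identifies the lower-dimensional whisker strata, rather than requiring a separate adjustment there. Concretely this means reconciling the description of $\mathscr{Q}\setminus\mathscr{Q}_0$ obtained in \cite{PS2} from $\mathscr{W}_\infty$ in $\mathbb{R}^5$ with the intrinsic description (5.7)--(5.8) produced here from the single condition $(z_1,z_2,z_3)\in R_3$. Once the two explicit whisker parametrisations are aligned, the outstanding verifications reduce to the routine substitutions already carried out in the derivations of (5.5)--(5.8) and in the proof of Lemma 5.7.
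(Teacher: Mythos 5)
Your proposal follows essentially the same route as the paper: the paper obtains Proposition 5.8 precisely by combining Lemma 5.7 (which identifies \(\mathscr{Q}_0\) with \(\mathcal{T}\) under \(\beta=-z_1\), \(2\gamma=z_2\)) with the assertion that the two whiskers forming \(\mathscr{Q}\setminus\mathscr{Q}_0\) in \cite{PS2} coincide with the curves (5.7)--(5.8) under that same transformation, each whisker joining to the fixed point \(P_2\) --- exactly the two-step decomposition and matching you describe. One small caveat: on the Poston--Stewart side the whiskers are part of the finite set \(\mathscr{Q}\) rather than being ``cut out by the analysis of \(\mathscr{W}_\infty\)''; the paper invokes \(\mathscr{W}_\infty\) only for the attaching of the rims to the boundary of the M\(\ddot{o}\)bius strip, i.e.\ the situation of Proposition 5.6, not for the whiskers.
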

In Proposition 5.6, we show that the rims of the bowls join to the boundary of \(\mathcal M\).  Poston and Stewart  deal with the same situation by considering the attaching map to   
 \(\mathscr{W}_{\infty} \subset S^2 = \{\alpha = 0\} \subset S^4\) in \cite{PS1} and \cite{PS2}.  But it is   complicated in \({\mathbb R}^5\).  But we consider the situation in \( {\mathbb P}^3( {\mathbb C})\).  Hence  the tangent developable  \(\mathcal T\) joins simply to the boundary of  \(\mathcal M\).   
We have  studied the external rays that connect \(\mathcal T\) and \(\mathcal M\) and any ruling of \(\mathcal T\) consists of two external rays and their intermediate interval ray. 

  We show the static aspect of catastrophe theory and also the dynamical aspect of catastrophe theory.

\bibliographystyle{amsplain}

\begin{thebibliography}{10}

\bibitem {BJ} E. Bedford and M. Jonsson, \textit{Dynamics of regular polynomial endomorphisms of  ${\bf C}^k$,}  Amer. J. Math. 
\textbf{122} (2000), 153--212.

\bibitem {B} R. J. Beerends, \textit{Chebyshev polynomials in several variables and radial part of the  Laplace-Beltrami operator}, Trans. Amer. Math. Soc. \textbf{328} (1991), 779--814.
 
\bibitem {Bo} N. Bourbaki, \textit{ Groupes et algebres de Lie,}  Chapitres IV, V, VI, Herman, Paris, 1968.

\bibitem {BD} J.-Y. Briend and J. Duval, \textit{Exposants de Liapounoff et distribution des points periodiques d'un  endomorphisme de  ${\bf  CP}^k$} ,  Acta Math. 
\textbf{182} (1999), 143--157. 

\bibitem {Ch} D. R. J. Chillingworth, \textit{'The Ubiquitous Astroid' }, in The Physics of Structure  Formation (eds. G. Dangelmayr and W. G\(\ddot{u}\)ttinger) , Springer 1987, 372--386.
 
\bibitem {Co} H. Coxeter, \textit{ Discrete groups generated by reflections,} Ann. Math. 
\textbf{35} (1934), 588--621.

\bibitem {DS} T-C, Dinh and N. Sibony,  \textit{Sur les endomorphismes holomorphes permutable de \({\mathbb P}^k\),}  Math. Ann. \textbf{324} (2002), 33--70.

\bibitem {EL} R.  Eier and R. Lidl,  \textit{A class of orthogonal polynomials in $k$ variables},  Math. Ann. \textbf{260} (1982), 93--99.

\bibitem {FS} J. E. Fornaess and N. Sibony,  \textit{ Complex dynamics in  higher dimension II.  Modern methods in complex analysis, }  Ann. of Math. Stud., \textbf{137} , Princeton Univ. Press,  1995.

\bibitem {G} A. Gray, \textit{Modern Differential Geometry of Curves and Surfaces ,}  CRC Press, Florida, 1993.

\bibitem {HW}  M. E. Hoffman and W. D. Withers, \textit{Generalized Chebyshev polynomials associated with affine Weyl groups}, Trans. Amer. Math. Soc. \textbf{308} (1988), 91--104.  

\bibitem {J} M. Jonsson, \textit{ Dynamics of  polynomial skew product on  ${\bf C}^2$,}  Math. Ann.
\textbf{314} (1999), 403--447.

\bibitem {K} T. H. Koornwinder, \textit{Orthogonal  polynomials in two variables which are eigenfuctions of two algebraically independent  partial differential   operators},  III, IV, Indag. Math.  \textbf{36} (1974),  357--369,  370--381. 

\bibitem {KR} M. Kokubu, W. Rossman, K. Saji, M. Umehara and K. Yamada,  \textit{Singularities of flat fronts in hyperbolic 3-spaces ,}  Pacific.  J. Math.  \textbf{221} (2005), 303-351.

\bibitem {L} R. Lidl,  \textit{ Tschebysheff polynome in mehreren variablen,}  J. Reine. Angew. Math.  \textbf{273} (1975), 178--198.

\bibitem {PS1} T. Poston and I. N. Stewart, \textit{Taylor expansions and catastrophes},  Research Notes in Math. 
\textbf{7}, Pitman (1976), 110--147.

\bibitem {PS2} T. Poston and I. N. Stewart, \textit{The cross-ratio foliation of binary quartic forms,}  Geometriae Dedicate
\textbf{27},  (1988), 263--280.

\bibitem {S} N. Sibony,  \textit{ Dynamique des applications rationalls de \({\mathbb P}^k\),}  Panoramas et syntheses, \textbf{8} (1999), 97--185.

\bibitem {UU1} K. Uchimura,  \textit{ The set of points with bounded orbits for generalized Chebyshev mappings ,}  Internat. J. Bifur. Chaos Appl. Sci. Enger.  \textbf{11} (2001), 91--107.

\bibitem {U} K. Uchimura,  \textit{ Generalized Chebyshev maps of  ${\bf C}^2$ and their perturbations,}  Osaka. J. Math.  \textbf{46} (2009), 995--1017.

\bibitem {UN} S. Ulam and von Newmann,  \textit{On combination of stochastic and deterministic processes ,}  Bull. Amer. Math.  Soc. \textbf{53} (1947), 1120.

\bibitem {V} A. P. Veselov,  \textit{ Integrable mappings and Lie algebras,}  Soviet Math. Dokl. \textbf{35} (1987), 211--213.

\bibitem {Z} E. C. Zeeman,  \textit{ The umbilic bracelet and double-cusp catastrophe,}  Lecture Notes in Math. \textbf{525}, Springer (1976), 328--366.
\end{thebibliography}

\end{document}